\newtheorem{theorem}{Theorem}[section]
\newtheorem{lemma}[theorem]{Lemma}
\newtheorem{conjecture}[theorem]{Conjecture}
\newtheorem{corollary}[theorem]{Corollary}
\theoremstyle{definition}
\newtheorem{definition}[theorem]{Definition}
\newenvironment{example}
{\pushQED{\qed}\examplex}
{\popQED\endexamplex}
\newtheoremstyle{named}{}{}{\itshape}{}{\bfseries}{.}{.5em}{#1 \thmnote{#3}}
\theoremstyle{named}
\newcommand{\CG}{C^G} 
\newcommand{\CN}{C^N} 
\newcommand{\CL}{C^L} 
\newcommand{\CS}{C^S} 
\DeclareMathOperator{\cc}{\mathrm{invcode}} 
\newcommand{\occ}{\overline{\cc}} 
\DeclareMathOperator{\rr}{\mathrm{rajcode}} 
\newcommand{\orr}{\overline{\rr}} 
\newcommand{\LL}{\overline{L}}
\newcommand{\absrajcode}[1]{\mathrm{raj}(#1)} 
\newcommand{\absoverrajcode}[1]{\overline{\mathrm{raj}}(#1)} 
\newcommand{\absinvcode}[1]{\mathrm{inv}(#1)}
\DeclareMathOperator{\nested}{\mathrm{nested}}
\DeclareMathOperator{\greedy}{\mathrm{greedy}}
\DeclareMathOperator{\leap}{\mathrm{leap}}
\DeclareMathOperator{\stair}{\mathrm{stair}}
\DeclareMathOperator{\leads}{\mathrm{leads}}
\DeclareMathOperator{\LIS}{\mathrm{LIS}}
\DeclareMathOperator{\wt}{\mathrm{wt}}
\DeclareMathOperator{\dwt}{\overline{\mathrm{wt}}}
\title{On the degree of Grothendieck Polynomials}
\author{Matt Dreyer}
\address{Matt Dreyer, Department of Mathematics, Cornell University, Ithaca NY 14853.  \newline\textup{mjd367@cornell.edu}
}
\author{Karola M\'esz\'aros}
\address{Karola M\'esz\'aros, Department of Mathematics, Cornell University, Ithaca, NY 14853.  \newline\textup{karola@math.cornell.edu}
}
\author{Avery St.~Dizier}
\address{Avery St.~Dizier, Department of Mathematics, University of Illinois at Urbana-Champaign, Urbana, IL 61801.  \newline\textup{stdizie2@illinois.edu}}
\thanks{Karola M\'esz\'aros received support from  CAREER NSF Grant DMS-1847284. Avery St.~Dizier received support from NSF Grant DMS-2002079.}
\begin{document}
	\begin{abstract}
		A beautiful degree formula for the Grothendieck polynomials was recently given by Pechenik, Speyer, and Weigandt (2021). We provide an alternative proof of their degree formula, utilizing the climbing chain model for Grothendieck polynomials introduced by Lenart, Robinson, and Sottile (2006). Moreover, for any term order satisfying $x_1<x_2<\cdots<x_n$ we present the leading monomial of each homogeneous components of the Grothendieck polynomial $\mathfrak{G}_w(\bm{x})$, confirming a conjecture of Hafner (2022). We conclude with a conjecture for the leading monomials of the homogenegous components of $\mathfrak{G}_w(\bm{x})$ in any term order satisfying $x_1>x_2>\cdots>x_n$.
	\end{abstract}

	\maketitle
	
	\section{Introduction}
	Schubert polynomials and Grothendieck polynomials are multivariate polynomials associated to permutations in $S_n$. 
	Schubert and Grothendieck polynomials were introduced by Lascoux and Sch\"utzenberger \cite{LS1,LS2} in their study of the \emph{flag variety}, the parameter space of maximal sequences of nested vector spaces in $\mathbb{C}^n$. The flag variety admits a cell decomposition into \emph{Schubert varieties} indexed by permutations in $S_n$.
	
	Schubert (resp.\ Grothendieck) polynomials arise as a set of distinguished representatives for the cohomology (resp.\ K-theoretic) classes of Schubert varieties in the cohomology ring (resp.\ K-theory) of the flag variety. Schubert polynomials have a rich and well-studied combinatorial structure, admiting a myriad of formulas such as \cite{laddermoves, BJS, FKschub, nilcoxeter, thomas, lenart, prismtableaux,balancedtableaux}. Many formulas for Schubert polynomials generalize to Grothendieck polynomials as well. 
	
	Schubert and Grothendieck polynomials are geometrically natural choices of class representatives: Knutson and Miller \cite{grobner} showed them to be the multidegrees and K-polynomials respectively of \emph{matrix Schubert varieties}, determinantal varieties obtained by pulling the Schubert varieties in the flag variety back to $n\times n$ matrix space. See \cite[Chapter 15]{CCA} for a thorough introduction.
	
	In \cite{symmgrothdeg}, it was noted that Grothendieck polynomials give a combinatorial approach to studying the \emph{regularity} of matrix Schubert varieties, a measure of the complexity of their defining ideals. The regularity is given by the difference between the degrees of the Grothendieck and Schubert polynomial of $w\in S_n$. The Schubert polynomial is homogeneous of degree $\ell(w)$, the number of inversions. The degree of the Grothendieck polynomial was not well-understood at the time, and has since seen several developments.
	
	An explicit degree formula for Grothendieck polynomials of Grassmannian permutations was given in \cite{symmgrothdeg}. This was extended to vexillary permutations in \cite{vexgrothdeg} by Rajchgot, Robichaux, and Weigandt; an alternate proof given by Hafner in \cite{elena1}. A general explicit degree formula for any Grothendieck polynomial was given by Pechenik, Speyer, and Weigandt in \cite{PSW}, in the form of permutation statistics called the \emph{Rajchgot code} $\rr(w)$ and \emph{Rajchgot index} $\absrajcode{w}$:	
	\begin{theorem}[{\cite[Theorem 1.1]{PSW}}]
		\label{thm:pswleadingterm}
		For any $w\in S_n$,
		\[\deg \mathfrak{G}_w=\absrajcode{w}.\]
		Moreover in any term order satisfying $x_1<x_2<\cdots<x_n$, the leading monomial of the highest-degree homogeneous component $\mathfrak{G}_w^{\mathrm{top}}$ of $\mathfrak{G}_w$ is $\bm{x}^{\rr(w)}$.
	\end{theorem}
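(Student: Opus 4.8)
The plan is to read the whole statement off the Lenart--Robinson--Sottile climbing chain expansion of $\mathfrak{G}_w$. Recall that this expresses
\[
\mathfrak{G}_w(\bm{x}) \;=\; \sum_{C}\,(-1)^{\operatorname{len}(C)-\ell(w)}\,\bm{x}^{\wt(C)},
\]
a signed sum over the climbing chains $C$ for $w$, where $\operatorname{len}(C)$ is the number of steps of $C$ and $\wt(C)\in\mathbb{Z}_{\ge 0}^n$ records, for each $i$, how many steps of $C$ carry label $i$, so that $|\wt(C)|=\operatorname{len}(C)$. The decisive elementary observation is that the sign $(-1)^{\operatorname{len}(C)-\ell(w)}$ depends on $C$ only through $\operatorname{len}(C)$. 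Hence for each $k$ the degree-$k$ homogeneous component of $\mathfrak{G}_w$ equals $(-1)^{k-\ell(w)}\sum_{\operatorname{len}(C)=k}\bm{x}^{\wt(C)}$, a sum of monomials all entering with the same sign; so there is no cancellation among climbing chains of a fixed length, and the degree-$k$ component is nonzero exactly when some climbing chain has length $k$. Writing $m^\ast$ for the maximal length of a climbing chain for $w$, we conclude $\deg\mathfrak{G}_w=m^\ast$ and
\[
\mathfrak{G}_w^{\mathrm{top}} \;=\; (-1)^{\,m^\ast-\ell(w)}\sum_{\operatorname{len}(C)=m^\ast}\bm{x}^{\wt(C)}.
\]

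This reduces the theorem to two purely combinatorial assertions about maximal climbing chains: \textbf{(i)} $m^\ast=\absrajcode{w}$; and \textbf{(ii)} $\rr(w)=\wt(C)$ for some climbing chain $C$ of length $m^\ast$, and $\rr(w)$ \emph{dominates} $\wt(C)$ for every climbing chain $C$, where $\bm{x}^a$ dominates $\bm{x}^b$ means $\sum_{i\ge \kappa}a_i\ge\sum_{i\ge\kappa}b_i$ for all $\kappa$. Assertion (i) is the degree formula. For the leading-monomial statement, one checks that, among monomials of a fixed total degree, $\bm{x}^b\le\bm{x}^a$ in \emph{every} term order satisfying $x_1<x_2<\cdots<x_n$ if and only if $\bm{x}^a$ dominates $\bm{x}^b$: if $\bm{x}^a$ dominates $\bm{x}^b$ then $\bm{x}^a$ is obtained from $\bm{x}^b$ by repeatedly replacing a variable with one of larger index, and each such replacement strictly increases the monomial in any term order with $x_1<\cdots<x_n$; conversely, if dominance fails at some $\kappa$, then the term order that first compares $\sum_{i\ge\kappa}(\cdot)_i$ and breaks ties by graded lexicographic order with $x_n>\cdots>x_1$ is again a term order satisfying $x_1<\cdots<x_n$ and reverses the comparison. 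So (ii) says exactly that $\bm{x}^{\rr(w)}$ is the leading monomial of the homogeneous polynomial $\mathfrak{G}_w^{\mathrm{top}}$ in every such term order, and by the displayed formula its coefficient is $(-1)^{m^\ast-\ell(w)}\cdot\#\{C:\operatorname{len}(C)=m^\ast,\ \wt(C)=\rr(w)\}$, which is nonzero.

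To prove (i) and (ii) I would use the defining property of a climbing chain --- its labels weakly increase --- to factor $C$ as a concatenation $C=C_1C_2\cdots C_{n-1}$ of blocks, with $C_t$ the (possibly empty) run of consecutive steps labelled $t$, so $\wt(C)_t=\operatorname{len}(C_t)$. Every step in $C_t$ acts at a row in $\{t,\dots,n-1\}$, so traversing $C_tC_{t+1}\cdots$ only alters positions $\ge t$; thus after $C_1\cdots C_t$ the underlying permutation already agrees with $w$ in positions $1,\dots,t$, and the compatibility conditions of the model constrain the rows used within $C_t$ to a strictly decreasing sequence in $\{t,\dots,n-1\}$. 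The crux is then a budget inequality: for every $t$, the combined length $\sum_{s\ge t}\operatorname{len}(C_s)$ of the blocks at rows $\ge t$ is at most $\sum_{s\ge t}\rr(w)_s$. The case $t=1$ gives $m^\ast\le\absrajcode{w}$; the reverse inequality together with the first half of (ii) is supplied by an explicitly constructed maximal climbing chain, built greedily and verified to have weight $\rr(w)$; and the budget inequalities for all $t$, read off from the suffix blocks $C_t\cdots C_{n-1}$, give the domination half of (ii).

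The main obstacle is the budget inequality. The naive per-block bound $\operatorname{len}(C_t)\le\rr(w)_t$ is false in general --- only the partial sums behave --- so one genuinely needs the requirement that $C_1\cdots C_{n-1}$ trace out a valid chain from the identity to $w$, and the argument must control how a maximal strictly decreasing run of simple-reflection steps transforms the current permutation and how this interacts with the Rajchgot statistics. I would attack this by invoking the combinatorial characterization of $\rr(w)$ and $\absrajcode{w}$ from \cite{PSW} and matching the extremal configuration underlying $\absrajcode{w}$ against the blocks of a greedy climbing chain, which would simultaneously bound $\sum_{s\ge t}\operatorname{len}(C_s)$ and identify the greedy weight as $\rr(w)$. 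A subsidiary point needing care is the legitimacy of the greedy construction: showing that lengthening an earlier block never reduces the length achievable by later blocks, so that the greedy chain really does attain length $m^\ast$.
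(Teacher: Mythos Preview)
Your proposal rests on a misstatement of the Lenart--Robinson--Sottile formula. In the climbing chain model used here (Theorem~\ref{thm:grothformula}), every climbing chain of $w$ is a \emph{saturated} Bruhat chain from $w$ to $w_0$ (not from the identity to $w$), so all of them have the \emph{same} length $\ell(w_0)-\ell(w)$; the sum giving $\mathfrak{G}_w$ ranges over \emph{marked} climbing chains $\xi=(C,U)$ with $M(C)\subseteq U\subseteq C$, and both the sign $(-1)^{\ell(C)-\#U}$ and the weight $\wt(\xi)$ depend on the marking set $U$, with $|\wt(\xi)|=\binom{n}{2}-\#U$. Your displayed formula, with variable-length chains satisfying $|\wt(C)|=\operatorname{len}(C)$, is therefore not the LRS model. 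Your observation that terms of a fixed degree enter with a common sign, hence cannot cancel, \emph{does} survive in the correct model --- since $\ell(C)$ is constant, the sign depends only on $\#U$, which in turn determines the degree --- and it is used. But the remainder of your outline does not transfer: the links of $C$ with first coordinate $t$ need not have strictly decreasing second coordinates (they split into several ``runs'', decreasing within each run but increasing between runs), the steps are arbitrary transpositions rather than simple reflections, and the quantity to be bounded is not a block length but the number of minimal markings $\#M(C)$, i.e.\ the number of runs.

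The paper's route is different in mechanism. It first builds an explicit \emph{nested chain} $\CN(w)$ and proves $\dwt(\CN(w),M(\CN(w)))=\orr(w)$ (Theorem~\ref{thm:strongequality}), thereby exhibiting the monomial $\bm{x}^{\rr(w)}$ in $\mathfrak{G}_w$. The upper bound $\deg\mathfrak{G}_w\le\absrajcode{w}$ is not a suffix-sum budget inequality of the kind you sketch, but comes from Algorithm~\ref{alg:1}: for an arbitrary chain $C$ one tracks an evolving set $\Psi_k$ of pairs and shows that each run of $C$ can absorb at most one element of a terminal set $\Omega_m$ of size $\absoverrajcode{w}$ (Lemma~\ref{lem:atmostoneperrun}, Theorem~\ref{prop:minnummarks}), forcing $\#M(C)\ge\absoverrajcode{w}$. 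The leading-monomial claim is then a separate recursive argument on heavy chains (Lemmas~\ref{lem:heavymaxfirstmarks}--\ref{lem:heavyequalitycase}, Theorem~\ref{thm:leadingchainweight}) that bounds the first nonzero coordinate of $\dwt$ and recurses on a truncation. Your dominance framing is a reasonable target statement, but the argument you propose does not establish it, because it is calibrated to the wrong combinatorial objects.
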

	The authors of \cite{PSW} expressed frustration (\cite[Remark 7.2]{PSW}) that their proof of Theorem \ref{thm:pswleadingterm} does not provide an explicit combinatorial representative of the monomial $\bm{x}^{\rr(w)}$ in the Grothendieck polynomial. We provide an explicit combinatorial representative in Definition \ref{def:nestedchain}, and alternate proof of Theorem \ref{thm:pswleadingterm} via Theorem \ref{thm:leadingchainweight}. In this paper we restrict our attention to Grothendieck polynomials in $x$ variables only. 
	
	Our main tool is the climbing chain model for Schubert and Grothendieck polynomials, introduced by Lenart, Robinson, and Sottile in \cite{LRS}. The model expresses the Schubert and Grothendieck polynomials in terms of certain saturated chains in the Bruhat order on permutations. We introduce explicit chains representing the leading monomials of $\mathfrak{S}_w$ and $\mathfrak{G}_w^{\mathrm{top}}$ in term orders with $x_1<x_2<\cdots<x_n$. We conjecturally do the same for term orders with $x_1>x_2>\cdots>x_n$ in Section \ref{sec:>}.

	We also investigate the leading monomials of the other homogeneous components of Grothendieck polynomials, a question first posed by Weigandt \cite{annacascade}. A potential answer was conjectured by Hafner:	
	\begin{conjecture}[{\cite[Conjecture 4.1]{elena1}}]
		\label{conj:elena}
		Fix $w\in S_n$ and any term order with $x_1<x_2<\cdots<x_n$. For $\ell(w)< k\leq \absrajcode{w}$, let $m_k(\bm{x})$ be the leading monomial of the degree $k$ homogeneous component of $\mathfrak{G}_w$. Then
		\[m_k(\bm{x}) = x_pm_{k-1}(\bm{x}) \]
		where $p$ is the largest index such that $x_p m_{k-1}(\bm{x})$ divides $\bm{x}^{\rr(w)}$.
	\end{conjecture}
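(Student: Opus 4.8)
The plan is to deduce Conjecture~\ref{conj:elena} from the climbing chain model by a degree-by-degree sharpening of the reasoning behind Theorem~\ref{thm:leadingchainweight}. Fix $w\in S_n$. In the climbing chain model the degree $k$ homogeneous component $\mathfrak{G}_w^{(k)}$ is the signed sum of $\bm{x}^{\wt(\vec{c})}$ over climbing chains $\vec{c}$ of length $k$, and --- crucially --- the sign of a length $k$ chain is $(-1)^{k-\ell(w)}$, depending only on $k$; equivalently $(-1)^{k-\ell(w)}\mathfrak{G}_w^{(k)}$ has nonnegative coefficients. Hence no cancellation occurs within a fixed degree, and the support of $\mathfrak{G}_w^{(k)}$ is \emph{exactly} the set of weights of length $k$ climbing chains. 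A second observation reduces the problem to a purely order-theoretic statement: any term order with $x_1<x_2<\cdots<x_n$ refines, on monomials of a fixed degree, the partial order ``all suffix sums $\sum_{i\ge j}a_i$ at least as large.'' Thus Conjecture~\ref{conj:elena} will follow once we show that among the length $k$ chain weights there is a suffix-sum-greatest one, and that it equals the greedy monomial $\gamma_k$ defined by $\gamma_{\ell(w)}:=\bm{x}^{\cc(w)}$ and $\gamma_k:=x_p\gamma_{k-1}$ with $p$ the largest index for which $x_p\gamma_{k-1}\mid\bm{x}^{\rr(w)}$. (This recursion is well posed and terminates at $\gamma_{\absrajcode{w}}=\bm{x}^{\rr(w)}$ after $\absrajcode{w}-\ell(w)$ steps, since $\lvert\cc(w)\rvert=\ell(w)$, $\lvert\rr(w)\rvert=\absrajcode{w}$, $\bm{x}^{\cc(w)}\mid\bm{x}^{\rr(w)}$, and a proper divisor of a monomial can always be enlarged by one variable while remaining a divisor.)

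The base case $k=\ell(w)$ asks that the leading monomial of $\mathfrak{S}_w=\mathfrak{G}_w^{(\ell(w))}$ be $\bm{x}^{\cc(w)}$; this is known, and in the present setup it amounts to the statement that $\bm{x}^{\cc(w)}$ suffix-sum-dominates the weights of the minimal-length (unmarked) climbing chains, which are precisely the ones computing $\mathfrak{S}_w$. For $\ell(w)<k\le\absrajcode{w}$ I would produce a length $k$ chain realizing $\gamma_k$ by truncation: the nested chain $\nested(w)$ of Definition~\ref{def:nestedchain} is built by adjoining covers in an order under which, by Theorem~\ref{thm:leadingchainweight}, its full weight is $\bm{x}^{\rr(w)}$; I expect one can verify that its initial segment $\vec{c}_k$ of length $k$ has weight exactly $\gamma_k$, so that $\wt(\vec{c}_{\ell(w)})=\bm{x}^{\cc(w)}$, $\wt(\vec{c}_k)=x_p\,\wt(\vec{c}_{k-1})$ with $p$ the greedy index, and $\wt(\vec{c}_{\absrajcode{w}})=\bm{x}^{\rr(w)}$. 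This places $\gamma_k$ in $\supp(\mathfrak{G}_w^{(k)})$ outright.

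It remains to prove the upper bound --- that every length $k$ climbing chain has weight suffix-sum-dominated by $\gamma_k$ --- and this is the step I expect to be the main obstacle. For $k=\absrajcode{w}$ it is exactly Theorem~\ref{thm:leadingchainweight}, but for intermediate $k$ both the number of chains to control and the target $\gamma_k$ become more intricate, the latter because the greedy rule packs the exponent as high as possible subject to the cap $\bm{x}^{\rr(w)}$. I would try two routes: an exchange argument showing that modifying a single cover of an arbitrary chain toward the ``greediest admissible'' choice never lowers any suffix sum of the weight, forcing $\gamma_k$; or a lifting argument that completes a length $k$ chain to a top-length chain, applies the known bound $\bm{x}^{\rr(w)}$, and transports the bound back through truncation --- the delicate point there being that deleting covers can spoil suffix-sum comparisons, so one must argue that the particular covers present beyond length $k$ may be removed without loss. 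Once the upper bound is in hand, $\gamma_k$ is simultaneously in $\supp(\mathfrak{G}_w^{(k)})$ and suffix-sum-greatest there, hence (using the first paragraph) it is the leading monomial $m_k(\bm{x})$ in every term order with $x_1<x_2<\cdots<x_n$; and $m_k(\bm{x})=\gamma_k=x_p\gamma_{k-1}=x_p\,m_{k-1}(\bm{x})$ with $p$ as in Conjecture~\ref{conj:elena}, completing the induction.
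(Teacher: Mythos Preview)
Your overall strategy matches the paper's: exploit that within a fixed degree all signed terms in Theorem~\ref{thm:grothformula} carry the same sign (so the support of the degree $k$ component of $\mathfrak{G}_w$ is exactly a set of chain weights), exhibit a chain realizing the target monomial, and then prove an upper bound. Your sign observation $(-1)^{k-\ell(w)}$ and the resulting cancellation-freeness are correct and are used implicitly throughout Sections~\ref{sec:inter}--\ref{sec:lead}.

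However, your description of the model contains a genuine error that breaks both your witness construction and your upper-bound sketches. In this paper's climbing chain model, \emph{every} climbing chain of $w$ has the \emph{same} length $m=\ell(w_0)-\ell(w)$; the degree of the contributed monomial is governed not by chain length but by the size of the marking set $U$, via $|\wt(C,U)|=\binom{n}{2}-\#U$. The degree $k$ component therefore corresponds to marked chains $(C,U)$ with $\#U=m-(k-\ell(w))$, not to ``chains of length $k$.'' Your proposed witness---the initial segment $\vec{c}_k$ of length $k$ of the nested chain---is not a climbing chain of $w$ at all (it terminates at some $w^{(k)}\neq w_0$), so its weight is not defined in this framework; and the truncation/lifting manoeuvres you suggest for the upper bound are not operations available on marked chains of $w$.

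What the paper does instead: for the witnesses, it builds full-length \emph{interpolating chains} $I^p(w)$ (a greedy prefix followed by a nested suffix) equipped with specific markings $U^p(w)$, and shows in Theorem~\ref{thm:interpolatingchainsexponents} that these realize precisely the vectors $L_w(d)$ you call $\gamma_k$. For the upper bound it proves the coordinate inequality $\dwt(C,U)_{i_1}\le\LL_w(m-\#U)_{i_1}$ for an arbitrary marked chain (Lemma~\ref{lem:onemarkbasecase}), by combining the minimum-markings bound $\#M(C')\ge\absoverrajcode{w^{(k)}}$ on the tail (Theorem~\ref{prop:minnummarks}) with the componentwise comparison of $\orr$ under a single link (Lemma~\ref{lem:heavytruncation}), and then iterates row by row, reducing the equality case to the heavy-chain analysis already done for the top degree (Theorem~\ref{thm:interpolatingleadingterms}). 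Neither of your two proposed routes obviously produces this coordinate-wise control.
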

	
	We address Hafner's conjecture by first providing climbing chains for her conjectured monomials (Definition \ref{def:interpolatingchains} and Theorem \ref{thm:interpolatingchainsexponents}). We use these chains to verify Conjecture \ref{conj:elena} (Theorem \ref{thm:interpolatingleadingterms}). We formulate an analogue of Conjecture \ref{conj:elena} for term orders with $x_1>x_2>\cdots>x_n$ (Conjecture \ref{conj:reverseinterpolating}).
	
	\medskip
	
	\noindent \textbf{Roadmap of the paper.}
	Section \ref{sec:back} contains general background.
	Section \ref{sec:PSW} is a summary of the results of \cite{PSW}.
	Section \ref{sec:climb} is a presentation of the Lenart, Robinson, and Sottile climbing chain model for Grothendieck polynomials (\cite{LRS}) via Rothe diagrams. 
	 Section \ref{sec:greedy} details the construction of a chain yielding the monomial $\bm{x}^{\cc(w)}$ in $\mathfrak{G}_w(\bm{x})$. 
	 Section \ref{sec:nested} details the construction of a chain yielding the monomial $\bm{x}^{\rr(w)}$ in $\mathfrak{G}_w(\bm{x})$. 
	In Section \ref{sec:raj} we give a new proof that the degree of the Grothendieck polynomial $\mathfrak{G}_w(\bm{x})$ is the Rajchgot index $\absrajcode{w}$. 
	In Section \ref{sec:rajcode}, we complete the alternate proof of Theorem \ref{thm:pswleadingterm}.
	In Section \ref{sec:inter} we define chains that interpolate between the greedy and the nested chains. We show the terms predicted by Hafner as the leading monomials of the homogenegous components of $\mathfrak{G}_w(\bm{x})$ lie in the support of $\mathfrak{G}_w(\bm{x})$. 
	In Section \ref{sec:lead}, we prove Conjecture \ref{conj:elena}.
	Section \ref{sec:>} is devoted to term orders satisfying $x_1>x_2>\cdots>x_n$. In Section \ref{sec:leap} we construct the leaping chain which yields the leading term of the lowest degree homogenegous components of $\mathfrak{G}_w(\bm{x})$. 
	In Section \ref{sec:stair} we construct the staircase chain, which we prove yields a monomial of degree $\absrajcode{w}$ and conjecture is the leading monomial of $\mathfrak{G}_w^{\mathrm{top}}$. We conclude by conjecturing a dual version of Conjecture \ref{conj:elena} for term orders satisfying $x_1>x_2>\cdots>x_n$.

	\section{Background} 
	\label{sec:back}
	
	\subsection{Conventions}
	For $n\in \mathbb{N}$, we write $[n]$ to mean the set $\{1,2,\ldots,n \}$. For $1\leq i<j\leq n$ we write $t_{ij}$ for the transposition in the symmetric group $S_n$ swapping $i$ and $j$. We write $s_j$ for the adjacent transposition $t_{j,j+1}$.
	
	Throughout, we write permutations in one-line notation (as a string) $w=w(1)w(2)\cdots w(n)$. We will take permutations as acting on the right -- switching positions, not values. For example $ws_1$ equals $w$ with the numbers $w(1)$ and $w(2)$ swapped. 
	
	For $v\in \mathbb{Z}^n$, we write $|v|$ for $v_1+v_2+\cdots+v_n$. We denote the standard basis vectors of $\mathbb{Z}^n$ by $e_1,e_2,\ldots,e_n$. We use the notation $\overline{v}$ to denote the \emph{vector complement} 
	\[\overline{v}_k = n-k - v_k \mbox{ for each } k\in[n].\]
	
	The \emph{leading term} of a polynomial $f\in \mathbb{Z}[x_1,\ldots,x_n]$ is the term of the largest monomial under $<$ appearing in $f$. The \emph{leading monomial} is the leading term divided by its coefficient. For example if $f(x_1,x_2)=x_1^2+2x_1x_2+3x_2^2$ and $x_1<x_2$, the leading term of $f$ is $3x_2^2$ and the leading monomial is $x_2^2$.
	
	\subsection{Schubert and Grothendieck Polynomials}
	
	\begin{definition}
		Fix any $n\geq 0$. The \emph{divided difference operators} $\partial_j$ for $j\in[n-1]$ are operators on the polynomial ring $\mathbb{Z}[x_1,\ldots,x_n]$ defined by
		\[\partial_j(f)=\frac{f-(s_j\cdot f)}{x_j-x_{j+1}}
		=\frac{f(x_1,\ldots,x_n)-f(x_1,\ldots,x_{j-1},x_{j+1},x_j,x_{j+2},\ldots,x_n)}{x_j-x_{j+1}}.\]
		The \emph{isobaric divided difference operators} $\overline{\partial}_j$ are defined on $\mathbb{Z}[x_1,\ldots,x_n]$ by
		$\overline{\partial}_j(f)=\partial_j(f-x_{j+1}f)$.
	\end{definition}
	
	\begin{definition}
		The \emph{Schubert polynomial} $\mathfrak{S}_w$ of $w\in S_n$ is defined recursively on the weak Bruhat order. Let $w_0=n \hspace{.1cm} n\!-\!1 \hspace{.1cm} \cdots \hspace{.1cm} 2 \hspace{.1cm} 1 \in S_n$, the longest permutation in $S_n$. If $w\neq w_0$ then there is $j\in [n-1]$ with $w(j)<w(j+1)$ (called an \emph{ascent} of $w$). The polynomial $\mathfrak{S}_w$ is defined by
		\begin{align*}
			\mathfrak{S}_w=\begin{cases}
				x_1^{n-1}x_2^{n-2}\cdots x_{n-1}&\mbox{ if } w=w_0,\\
				\partial_j \mathfrak{S}_{ws_j} &\mbox{ if } w(j)<w(j+1).
			\end{cases}
		\end{align*}
	\end{definition}
	
	\begin{definition}
		The \emph{Grothendieck polynomial} $\mathfrak{G}_w$ of $w\in S_n$ is defined analogously to the Schubert polynomial,
		with
		\begin{align*}
			\mathfrak{G}_w=\begin{cases}
				x_1^{n-1}x_2^{n-2}\cdots x_{n-1}&\mbox{ if } w=w_0,\\
				\overline{\partial}_j \mathfrak{G}_{ws_j} &\mbox{ if } w(j)<w(j+1).
			\end{cases}
		\end{align*}	
	\end{definition}
	
	Recall the number of inversions of $w\in S_n$ is $\ell(w)=\#\{(i,j)\mid 1\leq i<j\leq n \mbox{ with } w(i)>w(j) \}.$ It can be seen from the recursive definitions above that $\mathfrak{S}_w$ is homogeneous of degree $\ell(w)$, and equals the lowest-degree nonzero homogeneous component of $\mathfrak{G}_w$. See \cite{manivel} for a deeper introduction to Schubert polynomials. 
	
	Associated to each permutation $w\in S_n$ is its Rothe diagram $D(w)$.
	
	\begin{definition}
		The \emph{Rothe diagram} of a permutation $w\in S_n$ is the set
		\[D(w)=\{(i,j):\, 1\leq i,j\leq n,\,\, w(i)>j,\,\, \mbox{and } w^{-1}(j)>i \}.\]
	\end{definition}
	
	We will make use of the following graphical interpretation of $D(w)$. Start with an $n\times n$ grid of boxes with the usual matrix indexing. Place a dot in box $(i,w(i))$ for each $i\in [n]$. Draw rays emanating south and east of each dot, called \emph{hooks}. Remove any boxes hit by a hook. The remaining boxes lie exactly in the positions $D(w)$.
	
	\begin{definition}
		The \emph{Lehmer code} of $w\in S_n$ is the vector $\cc(w)=(\cc(w)_1,\ldots,\cc(w)_n)$ where $\cc(w)_i$ equals the number of boxes in row $i$ of $D(w)$. Symbolically,
		\[\cc(w)_i = \#\{j\mid i<j\mbox{ and } w(j)<w(i) \}. \]
	\end{definition}
	
	\begin{example}
		\label{exp:31452diagram}
		If $w=31452$, then $D(w)=\{(1,1),(1,2),(3,2),(4,2) \}$. We draw $D(w)$ by placing east and south hooks at each point $(i, w(i))$ in the $5\times 5$ grid and coloring all boxes not hit by any hook:
		\begin{center}
			\begin{tikzpicture}[scale=.55]
				\draw (0,0)--(5,0)--(5,5)--(0,5)--(0,0);
				\draw[draw=red] (5,4.5) -- (2.5,4.5) node[red] {$\bullet$} -- (2.5,0);
				\draw[draw=red] (5,3.5) -- (0.5,3.5) node[red] {$\bullet$} -- (0.5,0);
				\draw[draw=red] (5,1.5) -- (4.5,1.5) node[red] {$\bullet$} -- (4.5,0);
				\draw[draw=red] (5,2.5) -- (3.5,2.5) node[red] {$\bullet$} -- (3.5,0);
				\draw[draw=red] (5,0.5) -- (1.5,0.5) node[red] {$\bullet$} -- (1.5,0);
				
				\filldraw[draw=black,fill=lightgray] (0,4)--(1,4)--(1,5)--(0,5)--(0,4);
				\filldraw[draw=black,fill=lightgray] (1,4)--(2,4)--(2,5)--(1,5)--(1,4);
				\filldraw[draw=black,fill=lightgray] (1,2)--(2,2)--(2,3)--(1,3)--(1,2);
				\filldraw[draw=black,fill=lightgray] (1,1)--(2,1)--(2,2)--(1,2)--(1,1);
				
				\draw (0,1)--(5,1);
				\draw (0,2)--(5,2);
				\draw (0,3)--(5,3);
				\draw (0,4)--(5,4);
				
				\draw (1,0)--(1,5);
				\draw (2,0)--(2,5);
				\draw (3,0)--(3,5);
				\draw (4,0)--(4,5);
				
				\node at (-1.5,2.5) {$D(w)=$};
				
				\node at (5.3,2.5) {.};
			\end{tikzpicture}
		\end{center}
		The Lehmer code of $w$ is $\cc(w) = (2,0,1,1,0)$.
	\end{example}
	
	\section{Degree of Grothendieck Polynomials}
	\label{sec:PSW}
	
	We recall the results of Pechenik, Speyer, and Weigandt \cite{PSW} giving the leading monomial of the highest degree component of $\mathfrak{G}_w$ in any term order satisfying $x_1< x_2<\cdots< x_n$.
	
	\begin{definition}\label{def:1}
		An \emph{increasing subsequence} of a permutation $w\in S_n$ starting from position $q\in [n]$ is a vector $\alpha=(\alpha_1,\ldots,\alpha_m)$ such that
		\[q=\alpha_1<\alpha_2<\cdots<\alpha_m\leq n \quad \mbox{and}\quad w(\alpha_1)<w(\alpha_2)<\cdots<w(\alpha_m).\] 
		An increasing subsequence of a permutation $w$ starting from position $q$ is called \emph{longest} if there is no longer such sequence, i.e.\ $m$ is maximal. We refer to $m$ as the \emph{length} of $\alpha$. We denote by $\LIS(w,q)$ the set of all longest increasing subsequences of $w$ starting from position $q$.
	\end{definition}
	
	Graphically, increasing subsequences starting from position $q$ of a permutation $w$ are exactly sequences of nested hooks in the Rothe diagram $D(w)$ starting with the hook in row $q$.
	
	\begin{definition}
		To each $w\in S_n$, associate the vector 
		\[\orr(w)=(\orr(w)_1,\ldots,\orr(w)_n)\in\mathbb{Z}^n,\]
		where for each $i$, $\orr(w)_i$ equals the length of any $\alpha\in \LIS(w,i)$ minus one. 
	\end{definition}
	
	\begin{definition}[{\cite{PSW}}]
		The \emph{Rajchgot code} of $w\in S_n$ is the vector $\rr(w)\in \mathbb{Z}^n$ with 
		\[\rr(w)_k = n-k - \orr(w)_k \quad\mbox{for each } k\in [n]. \]
	\end{definition}

	Observe that the entries of $\rr(w)$ count the complement of longest increasing sequences: for any fixed $\alpha\in\LIS(w,k)$, $\rr(w)_k$ equals the number of elements in $\{k,k+1,\ldots,n\}$ that did not appear in $\alpha$.
	
	\begin{definition}
		The \emph{Rajchgot index} of $w\in S_n$ is
		\[\absrajcode{w} = |\rr(w)| = \sum_{k=1}^n \rr(w)_k.\]
		Similarly, let $\absoverrajcode{w} = |\orr(w)|$.
	\end{definition}

	\begin{example}
		Consider $w=48513726$, with Rothe diagram
		\begin{center}
			\begin{tikzpicture}[scale=.55]
				\draw (0,0)--(8,0)--(8,8)--(0,8)--(0,0);
				
				\draw[draw=red] (8,7.5) -- (3.5,7.5) node[red] {$\bullet$} -- (3.5,0); 
				\draw[draw=red] (8,6.5) -- (7.5,6.5) node[red] {$\bullet$} -- (7.5,0); 
				\draw[draw=red] (8,5.5) -- (4.5,5.5) node[red] {$\bullet$} -- (4.5,0); 
				\draw[draw=red] (8,4.5) -- (0.5,4.5) node[red] {$\bullet$} -- (0.5,0); 
				\draw[draw=red] (8,3.5) -- (2.5,3.5) node[red] {$\bullet$} -- (2.5,0); 
				\draw[draw=red] (8,2.5) -- (6.5,2.5) node[red] {$\bullet$} -- (6.5,0); 
				\draw[draw=red] (8,1.5) -- (1.5,1.5) node[red] {$\bullet$} -- (1.5,0); 
				\draw[draw=red] (8,0.5) -- (5.5,0.5) node[red] {$\bullet$} -- (5.5,0); 
				
				\filldraw[draw=black,fill=lightgray] (0,7)--(1,7)--(1,8)--(0,8)--(0,7); 
				\filldraw[draw=black,fill=lightgray] (1,7)--(2,7)--(2,8)--(1,8)--(1,7); 
				\filldraw[draw=black,fill=lightgray] (2,7)--(3,7)--(3,8)--(2,8)--(2,7); 
				\filldraw[draw=black,fill=lightgray] (0,6)--(1,6)--(1,7)--(0,7)--(0,6); 
				\filldraw[draw=black,fill=lightgray] (1,6)--(2,6)--(2,7)--(1,7)--(1,6); 
				\filldraw[draw=black,fill=lightgray] (2,6)--(3,6)--(3,7)--(2,7)--(2,6); 
				\filldraw[draw=black,fill=lightgray] (4,6)--(5,6)--(5,7)--(4,7)--(4,6); 
				\filldraw[draw=black,fill=lightgray] (5,6)--(6,6)--(6,7)--(5,7)--(5,6); 
				\filldraw[draw=black,fill=lightgray] (6,6)--(7,6)--(7,7)--(6,7)--(6,6); 
				\filldraw[draw=black,fill=lightgray] (0,5)--(1,5)--(1,6)--(0,6)--(0,5); 
				\filldraw[draw=black,fill=lightgray] (1,5)--(2,5)--(2,6)--(1,6)--(1,5); 
				\filldraw[draw=black,fill=lightgray] (2,5)--(3,5)--(3,6)--(2,6)--(2,5); 
				\filldraw[draw=black,fill=lightgray] (1,3)--(2,3)--(2,4)--(1,4)--(1,3); 
				\filldraw[draw=black,fill=lightgray] (1,2)--(2,2)--(2,3)--(1,3)--(1,2); 
				\filldraw[draw=black,fill=lightgray] (5,2)--(6,2)--(6,3)--(5,3)--(5,2); 
				
				\draw (0,1)--(8,1);
				\draw (0,2)--(8,2);
				\draw (0,3)--(8,3);
				\draw (0,4)--(8,4);
				\draw (0,5)--(8,5);
				\draw (0,6)--(8,6);
				\draw (0,7)--(8,7);
				
				\draw (1,0)--(1,8);
				\draw (2,0)--(2,8);
				\draw (3,0)--(3,8);
				\draw (4,0)--(4,8);
				\draw (5,0)--(5,8);
				\draw (6,0)--(6,8);
				\draw (7,0)--(7,8);
				
				\node at (-1.5,4.5) {$D(w)=$};
				\node at (8.3,4.5) {.};
			\end{tikzpicture}
		\end{center}
		The longest increasing sequences in $w$ are
		\begin{center}
			\begin{tabular}{lcl}
				$\LIS(w,1)$=$\{(1, 3, 6),(1, 3, 8) \}$,&\phantom{aaaa}&$\LIS(w,2)=\{(2)\}$,\\
				$\LIS(w,3)$=$\{(3, 6),(3, 8)\}$,&\phantom{aaaa}&$\LIS(w,4)=\{(4, 5, 6),(4, 5, 8),(4, 7, 8)\}$,\\
				$\LIS(w,5)$=$\{(5, 6),(5, 8)\}$,&\phantom{aaaa}&$\LIS(w,6)=\{(6)\}$,\\
				$\LIS(w,7)$=$\{(7,8)\}$,&\phantom{aaaa}&$\LIS(w,8)=\{(8)\}$.
			\end{tabular}
		\end{center}
	Observe that these sequences show up graphically as the longest sequences of (row indices of) nested hooks in $D(w)$.
	One computes
	\begin{align*}
		\orr(w)&=(2,0,1,2,1,0,1,0),\\
		\absoverrajcode{w} &= 2+0+1+2+1+0+1+0 = 7,\\
		\rr(w) &= (7,6,5,4,3,2,1,0)-\orr(w)=(5, 6, 4, 2, 2, 2, 0, 0),\quad \mbox{and}\\
		\absrajcode{w}&=5+6+4+2+2+2+0+0 = 21.\qedhere
	\end{align*}
	\end{example}

	In general, $\rr(w)_k\geq \cc(w)_k$ for all $k\in [n]$. This follows since $\cc(w)_k$ counts the number of inversions of the form $(k,\star)$ in $w$, and $\rr(w)_k$ counts the number of entries in $[k+1,n]$ excluded in any (fixed) sequence $\alpha\in \LIS(w,k)$.
	
	\section{Climbing Chains and Markings} 
	\label{sec:climb}
	
	We recall climbing chains, a combinatorial model for Schubert and Grothendieck polynomials due to Lenart, Robinson, and Sottile in \cite{LRS}. We depict their construction in terms of the Rothe diagrams; this diverges from their exposition, but we find this rendering of climbing chains particularly helpful. 
	
	The \emph{(strong) Bruhat order} is the partial order $\leq$ on $S_n$ defined as the transitive closure of the relations 
	\[w\leq wt_{ij} \]
	for any $i<j$ such that $w(i)<w(j)$.
	We write $w\lessdot v$ to denote a cover relation in the Bruhat order. The following well-known lemma describes all cover relations of the Bruhat order. See for instance \cite[Lemma 2.1.4]{combcoxgroups} for a proof.
	
	\begin{lemma}
		\label{lem:cover}
		For $v,w\in S_n$, $w\lessdot v$ if and only if there is $i,j\in [n]$ with $i<j$ such that:
		\begin{itemize}
			\item $v=wt_{ij}$,
			\item $w(i)<w(j)$,
			\item $\{k \mid i<k<j\}\cap \{k\mid w(i)<w(k)<w(j) \}=\emptyset$.
		\end{itemize}
	\end{lemma}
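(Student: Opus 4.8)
The plan is to reduce the Lemma to a single inversion count together with the fact that length is strictly monotone along chains in the Bruhat order. The first step is to show that for $i<j$ with $w(i)<w(j)$,
\[
\ell(wt_{ij})-\ell(w)=1+2\,m_{ij}(w),
\qquad
m_{ij}(w):=\#\bigl(\{k:i<k<j\}\cap\{k:w(i)<w(k)<w(j)\}\bigr),
\]
by comparing the inversion sets of $w$ and $wt_{ij}$ pair by pair. The pair of positions $(i,j)$ passes from a non-inversion to an inversion, contributing $+1$. For each $k$ with $i<k<j$, the two pairs $(i,k)$ and $(k,j)$ jointly gain exactly two inversions when $w(i)<w(k)<w(j)$ and none in the remaining cases $w(k)<w(i)$ or $w(k)>w(j)$. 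Every pair of positions meeting $\{i,j\}$ in at most one point keeps its inversion status: a pair meeting $\{i,j\}$ in exactly one point is grouped with its ``mirror'' pair, whose joint contribution is symmetric in the two interchanged values and hence unchanged. In particular $\ell(wt_{ij})>\ell(w)$ always, so unwinding the definition of $<$ as a transitive closure shows that $\ell$ strictly increases along every chain of up-steps; consequently $w<u<v$ forces $\ell(w)<\ell(u)<\ell(v)$. Note that the third bullet of the Lemma is exactly the condition $m_{ij}(w)=0$, i.e.\ $\ell(wt_{ij})=\ell(w)+1$.

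The ($\Leftarrow$) direction is then immediate: given the three conditions, $w\le wt_{ij}=v$ by the definition of the Bruhat order and $\ell(v)=\ell(w)+1$ by the count above, so any $u$ with $w<u<v$ would satisfy $\ell(w)<\ell(u)<\ell(w)+1$, which is impossible; hence $w\lessdot v$.

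For the ($\Rightarrow$) direction, suppose $w\lessdot v$. Unwinding $w<v$ into a chain of up-steps $w=u_0,u_1,\dots,u_r=v$, the first edge reads $w<u_1=wt_{ij}\le v$ for some $i<j$ with $w(i)<w(j)$, and the covering hypothesis forces $u_1=v$; thus $v=wt_{ij}$ with $w(i)<w(j)$, and it only remains to verify the third bullet. Suppose for contradiction that $i<k<j$ with $w(i)<w(k)<w(j)$. Set $u:=wt_{ik}$, so $w<u$ since $w(i)<w(k)$. A direct computation of the permutation product gives $v=wt_{ij}=u\,t_{ij}\,t_{kj}$, and each of the two right multiplications $u\mapsto ut_{ij}\mapsto v$ is an up-step, since $u(i)=w(k)<w(j)=u(j)$ and $(ut_{ij})(k)=w(i)<w(k)=(ut_{ij})(j)$. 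Hence $w<u<ut_{ij}<v$ with $u\notin\{w,v\}$, contradicting $w\lessdot v$; so the third bullet holds and the proof is complete.

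The only step beyond routine bookkeeping is this last construction of an element strictly between $w$ and $v=wt_{ij}$: one must thread a chain of up-steps from $w$ to $v$ through $wt_{ik}$, which amounts to choosing an order in which to apply the relevant transpositions so that every partial product is a genuine up-step. The order $wt_{ik}\mapsto wt_{ik}t_{ij}\mapsto wt_{ik}t_{ij}t_{kj}=v$ works; the superficially natural order $wt_{ik}\mapsto wt_{ik}t_{ik}\mapsto wt_{ik}t_{ik}t_{ij}$ does not, since its first stage returns to $w$. Everything else follows once the inversion count of the first step is in hand.
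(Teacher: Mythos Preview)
Your proof is correct. The paper does not give its own proof of this lemma; it simply cites \cite[Lemma 2.1.4]{combcoxgroups} (Bj\"orner--Brenti). Your argument is the standard one: establish the length formula $\ell(wt_{ij})-\ell(w)=1+2m_{ij}(w)$ by a pair-by-pair inversion count, deduce that length strictly increases along Bruhat chains, and then handle the two directions directly. The $(\Rightarrow)$ direction is the only place requiring a small construction, and your chain $w<wt_{ik}<wt_{ik}t_{ij}<wt_{ik}t_{ij}t_{kj}=v$ is checked correctly. Since the paper offers no proof to compare against, there is nothing further to contrast.
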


	Lemma \ref{lem:cover} can be interpreted graphically as follows. For $i<j$, $w\lessdot wt_{ij}$ if and only if inside $D(w)$:
	\begin{itemize}
		\item The dot at $(i,w(i))$ lies north and west of the dot at $(j,w(j))$,
		\item No other dot lies in the region bounded by the hooks at $(i,w(i))$ and $(j,w(j))$.
	\end{itemize}

	\begin{example}
		If $w=31452$ (continuing Example \ref{exp:31452diagram}), then $w\lessdot wt_{ij}$ exactly when 
		\[(i,j)\in \{(1,3),(2,3),(2,5),(3,4)\}.\qedhere\]
	\end{example}

	\begin{definition}
		A \emph{climbing chain} of $w\in S_n$ is a sequence $C$ of pairs $(i_1,j_1),\ldots,(i_m,j_m)$, called \emph{links}, such that 
		\begin{itemize}
			\item $i_p<j_p$ for each $p\in [m]$, 
			\item $i_1\leq i_2\leq \cdots \leq i_m$,
			\item $wt_{i_1j_1}\cdots t_{i_mj_m} = w_0$,
			\item $wt_{i_1j_1}\cdots t_{i_pj_p} \lessdot wt_{i_1j_1}\cdots t_{i_{p+1}j_{p+1}}$ for each $p\in [m]$ (in the Bruhat order on $S_n$).
		\end{itemize}
		We call $\ell(C)=m$ the \emph{length} of $C$, and will write $C_p$ to indicate the link $(i_p,j_p)$ for each $p\in [m]$.
	\end{definition}
	Climbing chains encode special saturated chains in the Bruhat order on $S_n$ from a given permutation $w$ to $w_0$, the longest permutation in $S_n$.
	
	\begin{example}
		\label{exp:31452unmarked}
		Let $w=256341$, so
		\begin{center}
			\begin{tikzpicture}[scale=.55]
			\draw (0,0)--(6,0)--(6,6)--(0,6)--(0,0);
			\draw[draw=red] (6,5.5) -- (1.5,5.5) node[red] {$\bullet$} -- (1.5,0);
			\draw[draw=red] (6,4.5) -- (4.5,4.5) node[red] {$\bullet$} -- (4.5,0);
			\draw[draw=red] (6,3.5) -- (5.5,3.5) node[red] {$\bullet$} -- (5.5,0);
			\draw[draw=red] (6,2.5) -- (2.5,2.5) node[red] {$\bullet$} -- (2.5,0);
			\draw[draw=red] (6,1.5) -- (3.5,1.5) node[red] {$\bullet$} -- (3.5,0);
			\draw[draw=red] (6,0.5) -- (0.5,0.5) node[red] {$\bullet$} -- (0.5,0);
			
			\filldraw[draw=black,fill=lightgray] (0,5)--(1,5)--(1,6)--(0,6)--(0,5);
			\filldraw[draw=black,fill=lightgray] (0,4)--(1,4)--(1,5)--(0,5)--(0,4);
			\filldraw[draw=black,fill=lightgray] (0,3)--(1,3)--(1,4)--(0,4)--(0,3);
			\filldraw[draw=black,fill=lightgray] (0,2)--(1,2)--(1,3)--(0,3)--(0,2);
			\filldraw[draw=black,fill=lightgray] (0,1)--(1,1)--(1,2)--(0,2)--(0,1);
			\filldraw[draw=black,fill=lightgray] (2,4)--(3,4)--(3,5)--(2,5)--(2,4);
			\filldraw[draw=black,fill=lightgray] (3,4)--(4,4)--(4,5)--(3,5)--(3,4);
			\filldraw[draw=black,fill=lightgray] (2,3)--(3,3)--(3,4)--(2,4)--(2,3);
			\filldraw[draw=black,fill=lightgray] (3,3)--(4,3)--(4,4)--(3,4)--(3,3);
			
			\draw (0,1)--(6,1);
			\draw (0,2)--(6,2);
			\draw (0,3)--(6,3);
			\draw (0,4)--(6,4);
			\draw (0,5)--(6,5);
			
			\draw (1,0)--(1,6);
			\draw (2,0)--(2,6);
			\draw (3,0)--(3,6);
			\draw (4,0)--(4,6);
			\draw (5,0)--(5,6);
			
			\node at (-1.5,2.5) {$D(w)=$};
			\node at (6.3,2.5) {.};
			\end{tikzpicture}
		\end{center}
	
		We construct a climbing chain of $w$. Since $w(1)\neq 6$, the first link of any climbing chain of $w$ will be of the form $(1,j_1)$, where the hook in row $j_1$ of $D(w)$ sits south and east of the hook in row 1, and no hook sits between them. The available choices are $j_1\in \{2,4\}$. We will choose $j_1=4$.
		
		Observe that $D(wt_{14}) = D(356241)$ is obtained from $D(w)$ by swapping the column indices of the dots in rows 1 and 4. That is,
	
		\begin{center}
			\begin{tikzpicture}[scale=.55]
				\draw (0,0)--(6,0)--(6,6)--(0,6)--(0,0);
				\draw[draw=red] (6,5.5) -- (2.5,5.5) node[red] {$\bullet$} -- (2.5,0);
				\draw[draw=red] (6,4.5) -- (4.5,4.5) node[red] {$\bullet$} -- (4.5,0);
				\draw[draw=red] (6,3.5) -- (5.5,3.5) node[red] {$\bullet$} -- (5.5,0);
				\draw[draw=red] (6,2.5) -- (1.5,2.5) node[red] {$\bullet$} -- (1.5,0);
				\draw[draw=red] (6,1.5) -- (3.5,1.5) node[red] {$\bullet$} -- (3.5,0);
				\draw[draw=red] (6,0.5) -- (0.5,0.5) node[red] {$\bullet$} -- (0.5,0);
				
				\filldraw[draw=black,fill=lightgray] (0,5)--(1,5)--(1,6)--(0,6)--(0,5);
				\filldraw[draw=black,fill=lightgray] (1,5)--(2,5)--(2,6)--(1,6)--(1,5);
				\filldraw[draw=black,fill=lightgray] (0,4)--(1,4)--(1,5)--(0,5)--(0,4);
				\filldraw[draw=black,fill=lightgray] (0,3)--(1,3)--(1,4)--(0,4)--(0,3);
				\filldraw[draw=black,fill=lightgray] (0,2)--(1,2)--(1,3)--(0,3)--(0,2);
				\filldraw[draw=black,fill=lightgray] (0,1)--(1,1)--(1,2)--(0,2)--(0,1);
				\filldraw[draw=black,fill=lightgray] (1,4)--(2,4)--(2,5)--(1,5)--(1,4);
				\filldraw[draw=black,fill=lightgray] (3,4)--(4,4)--(4,5)--(3,5)--(3,4);
				\filldraw[draw=black,fill=lightgray] (1,3)--(2,3)--(2,4)--(1,4)--(1,3);
				\filldraw[draw=black,fill=lightgray] (3,3)--(4,3)--(4,4)--(3,4)--(3,3);
				
				\draw (0,1)--(6,1);
				\draw (0,2)--(6,2);
				\draw (0,3)--(6,3);
				\draw (0,4)--(6,4);
				\draw (0,5)--(6,5);
				
				\draw (1,0)--(1,6);
				\draw (2,0)--(2,6);
				\draw (3,0)--(3,6);
				\draw (4,0)--(4,6);
				\draw (5,0)--(5,6);
				
				\node at (-2,2.5) {$D(wt_{14})=$};
				\node at (6.3,2.5) {.};
			\end{tikzpicture}
		\end{center}
		Since $wt_{14}(1)\neq 6$, the next link of the chain must be of the form $(1,j_2)$ where the hook in row $j_2$ of $D(wt_{14})$ sits south and east of the hook in row 1, and no hook sits between them. The available choices are $j_2\in \{2,5\}$. We will choose $j_2=2$.
		
		Note $wt_{14}t_{12} = 536241$, and has Rothe diagram
		
		\begin{center}
			\begin{tikzpicture}[scale=.55]
				\draw (0,0)--(6,0)--(6,6)--(0,6)--(0,0);
				\draw[draw=red] (6,5.5) -- (4.5,5.5) node[red] {$\bullet$} -- (4.5,0);
				\draw[draw=red] (6,4.5) -- (2.5,4.5) node[red] {$\bullet$} -- (2.5,0);
				\draw[draw=red] (6,3.5) -- (5.5,3.5) node[red] {$\bullet$} -- (5.5,0);
				\draw[draw=red] (6,2.5) -- (1.5,2.5) node[red] {$\bullet$} -- (1.5,0);
				\draw[draw=red] (6,1.5) -- (3.5,1.5) node[red] {$\bullet$} -- (3.5,0);
				\draw[draw=red] (6,0.5) -- (0.5,0.5) node[red] {$\bullet$} -- (0.5,0);
				
				\filldraw[draw=black,fill=lightgray] (0,5)--(1,5)--(1,6)--(0,6)--(0,5);
				\filldraw[draw=black,fill=lightgray] (1,5)--(2,5)--(2,6)--(1,6)--(1,5);
				\filldraw[draw=black,fill=lightgray] (0,4)--(1,4)--(1,5)--(0,5)--(0,4);
				\filldraw[draw=black,fill=lightgray] (0,3)--(1,3)--(1,4)--(0,4)--(0,3);
				\filldraw[draw=black,fill=lightgray] (0,2)--(1,2)--(1,3)--(0,3)--(0,2);
				\filldraw[draw=black,fill=lightgray] (0,1)--(1,1)--(1,2)--(0,2)--(0,1);
				\filldraw[draw=black,fill=lightgray] (1,4)--(2,4)--(2,5)--(1,5)--(1,4);
				\filldraw[draw=black,fill=lightgray] (3,5)--(4,5)--(4,6)--(3,6)--(3,5);
				\filldraw[draw=black,fill=lightgray] (2,5)--(3,5)--(3,6)--(2,6)--(2,5);
				\filldraw[draw=black,fill=lightgray] (1,3)--(2,3)--(2,4)--(1,4)--(1,3);
				\filldraw[draw=black,fill=lightgray] (3,3)--(4,3)--(4,4)--(3,4)--(3,3);
				
				\draw (0,1)--(6,1);
				\draw (0,2)--(6,2);
				\draw (0,3)--(6,3);
				\draw (0,4)--(6,4);
				\draw (0,5)--(6,5);
				
				\draw (1,0)--(1,6);
				\draw (2,0)--(2,6);
				\draw (3,0)--(3,6);
				\draw (4,0)--(4,6);
				\draw (5,0)--(5,6);
				
				\node at (-2.3,2.5) {$D(wt_{14}t_{12})=$};
				\node at (6.3,2.5) {.};
			\end{tikzpicture}
		\end{center}
		
	Since $wt_{14}t_{12}(1) \neq 6$, the next link of this chain has to be of the form $(1,j_3)$, and the only available choice is $j_3=3$. Then the next permutation is $wt_{14}t_{12}t_{13}=635241$, with diagram
	
	\begin{center}
		\begin{tikzpicture}[scale=.55]
		\draw (0,0)--(6,0)--(6,6)--(0,6)--(0,0);
		\draw[draw=red] (6,5.5) -- (5.5,5.5) node[red] {$\bullet$} -- (5.5,0);
		\draw[draw=red] (6,4.5) -- (2.5,4.5) node[red] {$\bullet$} -- (2.5,0);
		\draw[draw=red] (6,3.5) -- (4.5,3.5) node[red] {$\bullet$} -- (4.5,0);
		\draw[draw=red] (6,2.5) -- (1.5,2.5) node[red] {$\bullet$} -- (1.5,0);
		\draw[draw=red] (6,1.5) -- (3.5,1.5) node[red] {$\bullet$} -- (3.5,0);
		\draw[draw=red] (6,0.5) -- (0.5,0.5) node[red] {$\bullet$} -- (0.5,0);
		
		\filldraw[draw=black,fill=lightgray] (0,5)--(1,5)--(1,6)--(0,6)--(0,5);
		\filldraw[draw=black,fill=lightgray] (1,5)--(2,5)--(2,6)--(1,6)--(1,5);
		\filldraw[draw=black,fill=lightgray] (0,4)--(1,4)--(1,5)--(0,5)--(0,4);
		\filldraw[draw=black,fill=lightgray] (0,3)--(1,3)--(1,4)--(0,4)--(0,3);
		\filldraw[draw=black,fill=lightgray] (0,2)--(1,2)--(1,3)--(0,3)--(0,2);
		\filldraw[draw=black,fill=lightgray] (0,1)--(1,1)--(1,2)--(0,2)--(0,1);
		\filldraw[draw=black,fill=lightgray] (1,4)--(2,4)--(2,5)--(1,5)--(1,4);
		\filldraw[draw=black,fill=lightgray] (3,5)--(4,5)--(4,6)--(3,6)--(3,5);
		\filldraw[draw=black,fill=lightgray] (4,5)--(5,5)--(5,6)--(4,6)--(4,5);
		\filldraw[draw=black,fill=lightgray] (2,5)--(3,5)--(3,6)--(2,6)--(2,5);
		\filldraw[draw=black,fill=lightgray] (1,3)--(2,3)--(2,4)--(1,4)--(1,3);
		\filldraw[draw=black,fill=lightgray] (3,3)--(4,3)--(4,4)--(3,4)--(3,3);
		
		\draw (0,1)--(6,1);
		\draw (0,2)--(6,2);
		\draw (0,3)--(6,3);
		\draw (0,4)--(6,4);
		\draw (0,5)--(6,5);
		
		\draw (1,0)--(1,6);
		\draw (2,0)--(2,6);
		\draw (3,0)--(3,6);
		\draw (4,0)--(4,6);
		\draw (5,0)--(5,6);
		
		\node at (-2.5,2.5) {$D(wt_{14}t_{12}t_{13})=$};
		\node at (6.3,2.5) {.};
		\end{tikzpicture}
	\end{center}
	
	Finally, $wt_{14}t_{12}t_{13}(1)=6$. Since $wt_{14}t_{12}t_{13}(2)\neq 5$, the next link of the chain must be of the form $(2,j_4)$. The available choices are $j_4\in \{3,5\}$. Continuing in this fashion, one may obtain either of the chains
	\begin{align*}
	C^{(1)} &= ((1,4),(1,2),(1,3),(2,3),(3,5),(4,5)),\\
	C^{(2)} &= ((1,4),(1,2),(1,3),(2,5),(2,3),(4,5)).
	\end{align*}
	
	With different earlier choices, one can obtain any of the chains
	\begin{align*}
		C^{(3)} &= ((1,2),(1,3),(2,3),(3,4),(3,5),(4,5)),\\
		C^{(4)} &= ((1,2),(1,3),(2,4),(2,3),(3,5),(4,5)),\\
		C^{(5)} &= ((1,2),(1,3),(2,4),(2,5),(2,3),(4,5)),\\
		C^{(6)} &= ((1,4),(1,5),(1,2),(1,3),(2,3),(4,5)).\\
	\end{align*}
	These are the six climbing chains of $w=256341$.
	\end{example}

	Observe that all climbing chains of a given permutation $w$ have the same length $\ell(w_0)-\ell(w)$.
	
	\begin{definition}
		Associated to each climbing chain $C$ of a permutation $w\in S_n$ is a set of special links $M(C)$ called the \emph{minimal markings} of $C$. As $C$ is a sequence of distinct pairs $(i_p,j_p)$, we will abuse notation slightly and write $M(C)\subseteq C$. If the links of $C$ are $C_p=(i_p,j_p)$ for each $p\in [\ell(C)]$, then (taking $i_0=0$)
		\[M(C)=\{C_p \mid i_{p-1}<i_{p} \mbox{ or } i_{p-1}=i_p \mbox{ and } j_{p-1}<j_p\}. \]
	\end{definition}

	Observe that in the trivial case $w=w_0$, the only climbing chain is the empty sequence. We take the empty sequence to have minimal markings $\emptyset$.

	\begin{definition}
		For $w\in S_n$, a \emph{marked climbing chain} of $w$ is a pair $(C,U)$ where $C$ is a climbing chain of $w$ and $U$ is a superset of the minimal markings of $C$, that is $M(C)\subseteq U\subseteq C$.
	\end{definition}

	\begin{example}
		\label{exp:31452marked}
		Continuing Example \ref{exp:31452unmarked}, let $w=256341$. Using overlines to denote markings, the climbing chains of $w$ together with their minimal markings are
		\begin{align*}
			&(\overline{(1,4)},{(1,2)},\overline{(1,3)},\overline{(2,3)},\overline{(3,5)},\overline{(4,5)}),\\
			&(\overline{(1,4)},{(1,2)},\overline{(1,3)},\overline{(2,5)},{(2,3)},\overline{(4,5)}),\\
			&(\overline{(1,2)},\overline{(1,3)},\overline{(2,3)},\overline{(3,4)},\overline{(3,5)},\overline{(4,5)}),\\
			&(\overline{(1,2)},\overline{(1,3)},\overline{(2,4)},{(2,3)},\overline{(3,5)},\overline{(4,5)}),\\
			&(\overline{(1,2)},\overline{(1,3)},\overline{(2,4)},\overline{(2,5)},{(2,3)},\overline{(4,5)}),\\
			&(\overline{(1,4)},\overline{(1,5)},{(1,2)},\overline{(1,3)},\overline{(2,3)},\overline{(4,5)}).\qedhere
		\end{align*}
	\end{example}

	\begin{definition}
		The \emph{dual weight} of a marked climbing chain $\xi=(C,U)$ of $w\in S_n$ is the vector \[\dwt(\xi)=(\dwt(\xi)_1,\ldots,\dwt(\xi)_n) \]
		where $\dwt(\xi)_k$ equals the number of links $(k,\star)\in U$.
	\end{definition}
	
	\begin{definition}
		The \emph{weight} of a marked climbing chain $\xi=(C,U)$ of $w\in S_n$ is the vector \[\wt(\xi)=(\wt(\xi)_1,\ldots,\wt(\xi)_n)\] whose $k$th component equals $n-k-\dwt(\xi)_k$.
	\end{definition}
	
	\begin{theorem}[{\cite[Theorem 5.6]{LRS}}]
		\label{thm:grothformula}
		For any $w\in S_n$,
		\[\mathfrak{G}_w(x_1,\ldots,x_n) = \sum_{\xi} \mathrm{sign}(\xi) \bm{x}^{\wt(\xi)} =\sum_{\xi} \mathrm{sign}(\xi) \frac{\mathfrak{G}_{w_0}}{\bm{x}^{\dwt(\xi)}}, \]
		where the sum is over all marked climbing chains $\xi=(C,U)$ of $w$, and $\mathrm{sign}(\xi) = (-1)^{\ell(C)-\#U}$.
	\end{theorem}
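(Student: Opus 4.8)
The second equality is formal: $\mathfrak{G}_{w_0}=\bm{x}^{(n-1,n-2,\ldots,1,0)}$ by definition and $\wt(\xi)=(n-1,n-2,\ldots,1,0)-\dwt(\xi)$, so $\bm{x}^{\wt(\xi)}=\mathfrak{G}_{w_0}/\bm{x}^{\dwt(\xi)}$; it therefore suffices to prove the first equality. Write $F_w:=\sum_{\xi}\mathrm{sign}(\xi)\,\bm{x}^{\wt(\xi)}$, the sum over marked climbing chains $\xi=(C,U)$ of $w$. I would prove $F_w=\mathfrak{G}_w$ by induction on $n$ and, for fixed $n$, by descending induction on $\ell(w)$ (equivalently, on $\ell(w_0)-\ell(w)$). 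The base case $w=w_0$ is immediate: the only climbing chain is empty, with the single marking $U=\emptyset$, so $F_{w_0}=\bm{x}^{(n-1,\ldots,1,0)}=\mathfrak{G}_{w_0}$. Two preliminary reductions streamline the inductive step. First, if $w(1)=n$, then no climbing chain of $w$ has a link in row $1$ (a link $(1,\star)$ would require $w(1)<w(\star)$, impossible), while on the polynomial side $\mathfrak{G}_w(x_1,\ldots,x_n)=x_1^{n-1}\mathfrak{G}_{w'}(x_2,\ldots,x_n)$ for $w'=w(2)\cdots w(n)\in S_{n-1}$, since the operators $\overline{\partial}_j$ with $j\geq 2$ used to build $\mathfrak{G}_w$ from $\mathfrak{G}_{w_0}$ do not involve $x_1$; comparing coordinatewise, both sides factor identically and we conclude by induction on $n$. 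Hence we may assume $w(1)<n$, in which case every climbing chain of $w$ begins with a link in row $1$ (otherwise row $1$ is never touched and $w(1)$ cannot reach $n=w_0(1)$). Second, since $\mathfrak{G}_w=\overline{\partial}_j\mathfrak{G}_{ws_j}$ for \emph{any} ascent $j$ of $w$ and $\ell(ws_j)=\ell(w)+1$, the inductive hypothesis reduces us to proving $\overline{\partial}_j F_{ws_j}=F_w$ for one conveniently chosen ascent $j$.

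The crux is to describe $\overline{\partial}_j$ acting on the monomials $\bm{x}^{\wt(\xi')}$ ($\xi'$ a marked climbing chain of $ws_j$) in terms of the climbing chains of $w$. Using the twisted Leibniz rule, from the definition one gets $\overline{\partial}_j=\mathrm{id}+(1-x_j)\partial_j$, so $\overline{\partial}_j(\bm{x}^a)$ equals $\bm{x}^a$ plus a signed combination of monomials $\bm{x}^b$ that agree with $a$ outside positions $j$ and $j+1$. On the chain side, $w\lessdot ws_j$ is a Bruhat cover realized by the adjacent transposition $t_{j,j+1}$, so the dots of $D(w)$ in rows $j$ and $j+1$ lie in consecutive rows with the row-$j$ dot strictly to the left; this rigidity controls exactly how a climbing chain of $w$ may use rows $j$ and $j+1$. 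I would partition the marked climbing chains of $w$ according to the sublist of their links lying in rows $j$ and $j+1$, and show that summing $\mathrm{sign}(\xi)\,\bm{x}^{\wt(\xi)}$ over each class reproduces $\overline{\partial}_j(\bm{x}^{\wt(\xi')})$, where $\xi'$ is the marked climbing chain of $ws_j$ obtained by contracting that sublist. Since each link $(k,\star)\in U$ contributes $-1$ to the $k$th coordinate of $\wt(\xi)$, the monomials $\bm{x}^b$ produced by $(1-x_j)\partial_j$ should match the allowed ways of inserting marked links into rows $j$ and $j+1$, and the $\bm{x}^a$ from $\mathrm{id}$ the empty insertion; one must also verify that the minimal markings transform correctly, so that $\mathrm{sign}(\xi)=(-1)^{\ell(C)-\#U}$ is respected.

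The main obstacle is the bookkeeping of signs. Even for a single monomial, $\overline{\partial}_j(\bm{x}^a)$ is an alternating sum exhibiting internal cancellation (it degenerates when $a_j$ and $a_{j+1}$ are close), whereas on the chain side the minus signs come solely from unmarked links; it is not a priori clear that, after all cancellation, the surviving monomials of $\overline{\partial}_j F_{ws_j}$ are precisely the weights of marked climbing chains of $w$ with the correct signs and multiplicities. I expect making this precise to require a carefully chosen sign-reversing involution on the ``excess'' terms --- monomials produced by $\overline{\partial}_j$ that are not realized, or are realized with the wrong multiplicity, by a climbing chain of $w$ --- or else a rigid enough normal form for the row-$j$ and row-$(j+1)$ links of a climbing chain to eliminate the ambiguity outright; the two reductions in the first paragraph are designed precisely to keep the ensuing local case analysis small. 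An alternative, if one admits outside input, is to expand $\overline{\partial}_j F_{ws_j}$ and compare it against a known formula for $\mathfrak{G}_w$ such as the Knutson--Miller pipe-dream formula, but I would attempt the self-contained argument above first.
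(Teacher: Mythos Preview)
The paper does not prove this theorem at all: it is quoted verbatim from \cite{LRS} as background, with no argument given. So there is no ``paper's own proof'' to compare against; I can only assess your proposal on its own terms.

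As an outline, your strategy is sensible. The reduction of the second equality to the first is correct, the base case $w=w_0$ is right, the stability reduction when $w(1)=n$ is legitimate (both sides factor as $x_1^{n-1}$ times the corresponding $S_{n-1}$ object), and the identity $\overline{\partial}_j=\mathrm{id}+(1-x_j)\partial_j$ is correct. Reducing to $\overline{\partial}_j F_{ws_j}=F_w$ for a well-chosen ascent $j$ is the natural inductive scheme.

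That said, what you have written is not a proof but a plan for one, and you say so yourself: the ``main obstacle'' paragraph names exactly the missing piece. The entire content of the theorem lies in the bijective/involutive step you defer. Concretely, you have not specified which ascent $j$ you use, you have not described the ``contraction'' map from marked climbing chains of $w$ to those of $ws_j$ (nor checked it is well-defined and respects the climbing condition $i_1\le i_2\le\cdots$), and you have not built the sign-reversing involution that would cancel the excess terms of $(1-x_j)\partial_j$. The difficulty is real: climbing chains are indexed by arbitrary transpositions $t_{ij}$, not adjacent ones, and the constraint that the first coordinates be weakly increasing means that inserting or deleting a link in rows $j,j+1$ can globally obstruct the chain. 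A single $\overline{\partial}_j$ sees only rows $j$ and $j+1$, while the chain may pass through those rows at several separated moments; organizing this into a clean local move is the whole problem. Until that is written down and checked, the argument has a genuine gap.

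If you pursue this route, the cleanest choice of $j$ is likely the \emph{last} ascent of $w$ (so that links in rows $\ge j$ are as constrained as possible), rather than trying to exploit row $1$. Alternatively, the original proof in \cite{LRS} does not proceed by divided differences at all; it derives the chain formula from a $K$-theoretic Chevalley/Pieri-type identity, which sidesteps the sign-cancellation bookkeeping you are worried about. Either way, what you have here is a reasonable sketch of one possible approach, not yet a proof.
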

	
	\begin{corollary}[\cite{LRS}]
		For any $w\in S_n$,
		\[\mathfrak{S}_w(x_1,\ldots,x_n) = \sum_{\xi} \bm{x}^{\wt(\xi)} = \sum_{\xi} \frac{\mathfrak{G}_{w_0}}{\bm{x}^{\dwt(\xi)}}, \]
		where the sum is over all marked climbing chains $\xi=(C,U)$ of $w$ with $U=C$.
	\end{corollary}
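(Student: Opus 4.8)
The plan is to extract the lowest-degree homogeneous component of both sides of the Grothendieck formula in Theorem~\ref{thm:grothformula}, using the fact recalled in Section~\ref{sec:back} that $\mathfrak{S}_w$ is precisely the degree-$\ell(w)$ homogeneous component of $\mathfrak{G}_w$. Everything will follow once I pin down the degree of each monomial $\bm{x}^{\wt(\xi)}$ appearing in the climbing chain expansion.

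First I would compute, for a marked climbing chain $\xi=(C,U)$ of $w$,
\[\deg \bm{x}^{\wt(\xi)} \;=\; |\wt(\xi)| \;=\; \sum_{k=1}^n\bigl(n-k-\dwt(\xi)_k\bigr) \;=\; \binom{n}{2}-|\dwt(\xi)| \;=\; \binom{n}{2}-\#U,\]
where the last equality holds because each link in $U$ contributes exactly $1$ to $\dwt(\xi)$, namely in the coordinate equal to its first index. Since $U\subseteq C$ and every climbing chain of $w$ has length $\ell(C)=\ell(w_0)-\ell(w)=\binom{n}{2}-\ell(w)$ (as observed after Example~\ref{exp:31452unmarked}), this gives
\[\deg \bm{x}^{\wt(\xi)} \;=\; \binom{n}{2}-\#U \;\geq\; \binom{n}{2}-\ell(C) \;=\; \ell(w),\]
with equality if and only if $\#U=\ell(C)$, i.e.\ $U=C$.

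Next I would take the degree-$\ell(w)$ part of the identity $\mathfrak{G}_w=\sum_\xi \mathrm{sign}(\xi)\,\bm{x}^{\wt(\xi)}$ from Theorem~\ref{thm:grothformula}. By the displayed inequality, the only marked climbing chains contributing a monomial of degree $\ell(w)$ are exactly those with $U=C$, and for such $\xi$ we have $\mathrm{sign}(\xi)=(-1)^{\ell(C)-\#U}=(-1)^0=1$. Because every surviving term carries coefficient $+1$, no cancellation can occur among them, so the degree-$\ell(w)$ component of $\mathfrak{G}_w$ — which is $\mathfrak{S}_w$ — equals $\sum_{\xi\,:\,U=C}\bm{x}^{\wt(\xi)}$, proving the first equality. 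For the second equality, I would simply rewrite each term: since $\mathfrak{G}_{w_0}=x_1^{n-1}x_2^{n-2}\cdots x_{n-1}=\bm{x}^{(n-1,n-2,\ldots,1,0)}$ and $\wt(\xi)_k=n-k-\dwt(\xi)_k$, we get $\bm{x}^{\wt(\xi)}=\mathfrak{G}_{w_0}/\bm{x}^{\dwt(\xi)}$, the quotient being a genuine monomial because $\wt(\xi)_k\geq 0$ (these are exponents occurring in the polynomial $\mathfrak{G}_w$).

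The only step requiring any care is the no-cancellation claim — one must be sure the extracted degree-$\ell(w)$ component is genuinely the full sum over $U=C$ chains and not something smaller after cancellation — but this is immediate from the uniform sign $+1$ on precisely those terms, so I do not anticipate a real obstacle; the argument is essentially a bookkeeping of degrees against the uniform chain length $\binom{n}{2}-\ell(w)$.
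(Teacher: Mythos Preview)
Your argument is correct and is exactly the natural derivation the paper implicitly intends: the corollary is stated immediately after Theorem~\ref{thm:grothformula} with no proof given, relying on the fact (recalled in Section~\ref{sec:back}) that $\mathfrak{S}_w$ is the lowest-degree homogeneous component of $\mathfrak{G}_w$. Your degree computation $|\wt(\xi)|=\binom{n}{2}-\#U$ and the observation that the sign is $+1$ precisely when $U=C$ are the two ingredients needed, and you have them; the remark about cancellation is harmless but unnecessary, since extracting a homogeneous component is linear and the identity of Theorem~\ref{thm:grothformula} already holds at the level of formal sums.
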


	\begin{example}
		\label{exp:31452polys}
		Continuing Example \ref{exp:31452marked}, let $w=256341$. Then the Schubert and Grothendieck polynomials of $w$ are given by
		\begin{align*}
			\mathfrak{S}_w &=\frac{x_1^5x_2^4x_3^3x_4^2x_5}{x_1^3x_2x_3x_4}+
							 \frac{x_1^5x_2^4x_3^3x_4^2x_5}{x_1^3x_2^2x_4}+
							 \frac{x_1^5x_2^4x_3^3x_4^2x_5}{x_1^2x_2x_3^2x_4}+
							 \frac{x_1^5x_2^4x_3^3x_4^2x_5}{x_1^2x_2^2x_3x_4}+
							 \frac{x_1^5x_2^4x_3^3x_4^2x_5}{x_1^2x_2^3x_4}+
							 \frac{x_1^5x_2^4x_3^3x_4^2x_5}{x_1^4x_2x_4}\\
			\mathfrak{G}_w &= \mathfrak{S}_w\\
			&\qquad-\frac{x_1^5x_2^4x_3^3x_4^2x_5}{x_1^3x_2x_3x_4}-
					\frac{x_1^5x_2^4x_3^3x_4^2x_5}{x_1^2x_2^2x_4}-
					\frac{x_1^5x_2^4x_3^3x_4^2x_5}{x_1^3x_2x_4}-
					\frac{x_1^5x_2^4x_3^3x_4^2x_5}{x_1^2x_2x_3x_4}-
					\frac{x_1^5x_2^4x_3^3x_4^2x_5}{x_1^2x_2^2x_4}-
					\frac{x_1^5x_2^4x_3^3x_4^2x_5}{x_1^3x_2x_4}
			\\&\qquad+\frac{x_1^5x_2^4x_3^3x_4^2x_5}{x_1^2x_2x_4}.\qedhere
		\end{align*}
	\end{example}

	\begin{lemma}
		\label{lem:markingrecursion}
		Fix $w\in S_n$ with $w\neq w_0$. Let $C$ be any climbing chain of $w$. Suppose $C_p=(i_p,j_p)$ for $p\in [\ell(C)]$, and set $w'=wt_{i_1j_1}$. Define $C'$ to equal $C$ with $C_1$ removed. Then $C'$ is a climbing chain for $w'$ and 
		\[
		M(C) = 
		\begin{cases}
		(M(C')\setminus\{C_2\})\cup \{C_1\}&\mbox{if } \ell(C)>1 \mbox{ and }C_2\notin M(C),\\
		M(C')\cup \{C_1\}&\mbox{otherwise}.
		\end{cases} 
		\]		
		Consequently,
		\[
			\dwt(C,M(C)) = 
			\begin{cases}
				\dwt(C',M(C'))&\mbox{if } \ell(C)>1 \mbox{ and }C_2\notin M(C),\\
				\dwt(C',M(C'))+e_{i_1}&\mbox{otherwise}.
			\end{cases}
		\] 
	\end{lemma}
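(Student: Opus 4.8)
The plan is to peel the definition of minimal markings apart link by link, comparing $M(C)$ with $M(C')$ term by term. I would first record that $C'$ is a climbing chain of $w'=wt_{i_1j_1}$: its links are $C_2,\dots,C_{\ell(C)}$, their first coordinates are still weakly increasing, the partial products $w't_{i_2j_2}\cdots t_{i_pj_p}=wt_{i_1j_1}\cdots t_{i_pj_p}$ are precisely the partial products already occurring along $C$ (so the Bruhat cover relations are inherited), and the final one is $w_0$. When $\ell(C)=1$ this just says $C'=()$ is the unique (empty) climbing chain of $w_0$, with $M(C')=\emptyset$.

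The heart of the argument is a bookkeeping comparison of $M(C)$ and $M(C')$, using the convention $i_0=0$ for both. There are three observations. First, $C_1$ is always minimally marked in $C$ (since $0=i_0<i_1$), while $C_1$ is simply not a link of $C'$. Second, $C_2$ is the first link of $C'$, hence always lies in $M(C')$. Third, for every $p\ge 3$ the link $C_{p-1}$ is the immediate predecessor of $C_p$ in both $C$ and $C'$, so membership of $C_p$ in $M(C)$ and in $M(C')$ is governed by the identical inequality, and the two sets agree on $\{C_3,\dots,C_{\ell(C)}\}$. Therefore the only degree of freedom is whether $C_2\in M(C)$, i.e.\ whether $i_1<i_2$ or ($i_1=i_2$ and $j_1<j_2$). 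If this condition fails (the first case of the statement, where necessarily $\ell(C)>1$), then $M(C')\setminus\{C_2\}$ equals $M(C)\setminus\{C_1\}$, giving $M(C)=(M(C')\setminus\{C_2\})\cup\{C_1\}$; otherwise $M(C')$ equals $M(C)\setminus\{C_1\}$ — and this also subsumes the degenerate case $\ell(C)=1$ — giving $M(C)=M(C')\cup\{C_1\}$.

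For the dual-weight identity I would feed these set equalities into the definition of $\dwt$, whose $k$-th coordinate counts the links of the marking with first coordinate $k$. In the ``otherwise'' case $M(C)$ is obtained from $M(C')$ by adjoining the single new link $C_1=(i_1,j_1)$, so $\dwt$ increases by exactly $e_{i_1}$. In the remaining case $M(C)$ is obtained from $M(C')$ by deleting $C_2=(i_2,j_2)$ and inserting $C_1=(i_1,j_1)$; here lies the one place where I must use that $C$ is a \emph{climbing} chain and not an arbitrary marked sequence. Since $i_1\le i_2$, if we had $i_1<i_2$ then the marking condition for $C_2$ would hold and $C_2$ would be minimally marked, contradicting the case hypothesis; hence $i_1=i_2$, the deletion and the insertion cancel coordinate-wise, and $\dwt$ is unchanged. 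That last implication — extracting $i_1=i_2$ from $C_2\notin M(C)$ — is the only non-routine step; everything else is a direct unwinding of definitions, and I expect no real obstacle beyond keeping the index bookkeeping and the $\ell(C)=1$ boundary case straight.
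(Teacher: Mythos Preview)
Your proposal is correct and follows essentially the same approach as the paper: both argue that the result is immediate from the local nature of the minimal-marking condition, and then deduce the $\dwt$ identity by counting. The paper's proof is a terse two-line ``straightforward from the definition'' remark, and what you have written is precisely the careful unwinding of those definitions (including the observation that $C_2\notin M(C)$ forces $i_1=i_2$), so there is no substantive difference in method.
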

	\begin{proof}
		It is straightforward to see from the definition that $C'$ is a climbing chain for $w'$. 
		The first equality follows immediately from the local restrictions defining the minimal markings of a climbing chain. The second equality follows from the first by counting markings.
	\end{proof}
	
		\section{The Greedy Chain}\label{sec:greedy}
		
	We construct a particular climbing chain which we refer to as the greedy chain. We show that the greedy chain yields the monomial $\bm{x}^{\cc(w)}$. In particular, this monomial is the leading term of the Schubert polynomial in any term order satisfying $x_1<x_2<\cdots<x_n$. 
	
	\begin{definition}
		To each permutation $w\in S_n$ we associate a pair of integers $\greedy(w)$, called the \emph{greedy pair} of $w$, as follows. The greedy pair of $w_0$ is undefined. If $w\neq w_0$, set 
		\[a=\min\{k\mid w(k)\neq n+1-k\}\quad \mbox{and}\quad b=w^{-1}(w(i)+1).\] 
		 Then $\greedy(w)=(a,b)$.
	\end{definition}

	\begin{definition}
		Define the \emph{greedy climbing chain} $\CG(w)$ of $w\in S_n$ as follows. If $w=w_0$, then $\CG(w)$ is the empty sequence. Otherwise, let $\greedy(w)=(a,b)$. Define $\CG(w)$ inductively by prepending $(a,b)$ to $\CG(wt_{ab})$.
	\end{definition}
	
	The following example demonstrates a graphical interpretation of $\greedy(w)$.
	\begin{example}
		Continuing Example \ref{exp:31452marked} with $w=256341$, we compute $\CG(w)$.
		Recall the Rothe diagram of $w$ is
		\begin{center}
			\begin{tikzpicture}[scale=.55]
			\draw (0,0)--(6,0)--(6,6)--(0,6)--(0,0);
			\draw[draw=red] (6,5.5) -- (1.5,5.5) node[red] {$\bullet$} -- (1.5,0);
			\draw[draw=red] (6,4.5) -- (4.5,4.5) node[red] {$\bullet$} -- (4.5,0);
			\draw[draw=red] (6,3.5) -- (5.5,3.5) node[red] {$\bullet$} -- (5.5,0);
			\draw[draw=red] (6,2.5) -- (2.5,2.5) node[red] {$\bullet$} -- (2.5,0);
			\draw[draw=red] (6,1.5) -- (3.5,1.5) node[red] {$\bullet$} -- (3.5,0);
			\draw[draw=red] (6,0.5) -- (0.5,0.5) node[red] {$\bullet$} -- (0.5,0);
			
			\filldraw[draw=black,fill=lightgray] (0,5)--(1,5)--(1,6)--(0,6)--(0,5);
			\filldraw[draw=black,fill=lightgray] (0,4)--(1,4)--(1,5)--(0,5)--(0,4);
			\filldraw[draw=black,fill=lightgray] (0,3)--(1,3)--(1,4)--(0,4)--(0,3);
			\filldraw[draw=black,fill=lightgray] (0,2)--(1,2)--(1,3)--(0,3)--(0,2);
			\filldraw[draw=black,fill=lightgray] (0,1)--(1,1)--(1,2)--(0,2)--(0,1);
			\filldraw[draw=black,fill=lightgray] (2,4)--(3,4)--(3,5)--(2,5)--(2,4);
			\filldraw[draw=black,fill=lightgray] (3,4)--(4,4)--(4,5)--(3,5)--(3,4);
			\filldraw[draw=black,fill=lightgray] (2,3)--(3,3)--(3,4)--(2,4)--(2,3);
			\filldraw[draw=black,fill=lightgray] (3,3)--(4,3)--(4,4)--(3,4)--(3,3);
			
			\draw (0,1)--(6,1);
			\draw (0,2)--(6,2);
			\draw (0,3)--(6,3);
			\draw (0,4)--(6,4);
			\draw (0,5)--(6,5);
			
			\draw (1,0)--(1,6);
			\draw (2,0)--(2,6);
			\draw (3,0)--(3,6);
			\draw (4,0)--(4,6);
			\draw (5,0)--(5,6);
			
			\node at (-1.5,2.5) {$D(w)=$};
			\node at (6.3,2.5) {.};
			\end{tikzpicture}
		\end{center}
		Let $\greedy(w)=(i_1,j_1)$. Since $w(1)\neq 6$, $i_1=1$. The definition of the greedy pair says that \[j_1=w^{-1}(w(i_1)+1),\] that is $j_1$ is the row index of the dot in column $w(i_1)+1$. Graphically, $j_1$ is the row index of the leftmost dot inside the region south and east of the hook in row $i_1$. Thus, $j_1=4$. 
		
		The Rothe diagram of $wt_{14}$ is
		\begin{center}
			\begin{tikzpicture}[scale=.55]
			\draw (0,0)--(6,0)--(6,6)--(0,6)--(0,0);
			\draw[draw=red] (6,5.5) -- (2.5,5.5) node[red] {$\bullet$} -- (2.5,0);
			\draw[draw=red] (6,4.5) -- (4.5,4.5) node[red] {$\bullet$} -- (4.5,0);
			\draw[draw=red] (6,3.5) -- (5.5,3.5) node[red] {$\bullet$} -- (5.5,0);
			\draw[draw=red] (6,2.5) -- (1.5,2.5) node[red] {$\bullet$} -- (1.5,0);
			\draw[draw=red] (6,1.5) -- (3.5,1.5) node[red] {$\bullet$} -- (3.5,0);
			\draw[draw=red] (6,0.5) -- (0.5,0.5) node[red] {$\bullet$} -- (0.5,0);
			
			\filldraw[draw=black,fill=lightgray] (0,5)--(1,5)--(1,6)--(0,6)--(0,5);
			\filldraw[draw=black,fill=lightgray] (1,5)--(2,5)--(2,6)--(1,6)--(1,5);
			\filldraw[draw=black,fill=lightgray] (0,4)--(1,4)--(1,5)--(0,5)--(0,4);
			\filldraw[draw=black,fill=lightgray] (0,3)--(1,3)--(1,4)--(0,4)--(0,3);
			\filldraw[draw=black,fill=lightgray] (0,2)--(1,2)--(1,3)--(0,3)--(0,2);
			\filldraw[draw=black,fill=lightgray] (0,1)--(1,1)--(1,2)--(0,2)--(0,1);
			\filldraw[draw=black,fill=lightgray] (1,4)--(2,4)--(2,5)--(1,5)--(1,4);
			\filldraw[draw=black,fill=lightgray] (3,4)--(4,4)--(4,5)--(3,5)--(3,4);
			\filldraw[draw=black,fill=lightgray] (1,3)--(2,3)--(2,4)--(1,4)--(1,3);
			\filldraw[draw=black,fill=lightgray] (3,3)--(4,3)--(4,4)--(3,4)--(3,3);
			
			\draw (0,1)--(6,1);
			\draw (0,2)--(6,2);
			\draw (0,3)--(6,3);
			\draw (0,4)--(6,4);
			\draw (0,5)--(6,5);
			
			\draw (1,0)--(1,6);
			\draw (2,0)--(2,6);
			\draw (3,0)--(3,6);
			\draw (4,0)--(4,6);
			\draw (5,0)--(5,6);
			
			\node at (-2,2.5) {$D(wt_{14})=$};
			\node at (6.3,2.5) {.};
			\end{tikzpicture}
		\end{center}
		From the Rothe diagram, we observe that $\greedy(wt_{14}) = (1,5)$. Continuing in this fashion yields 
		\[\CG(w)=((1,4),(1,5),(1,2),(1,3),(2,3),(4,5)). \]
		This is the chain $C^{(6)}$ in Example \ref{exp:31452unmarked}.
	\end{example}

	The following lemma is straightforward and is left to the reader.
	\begin{lemma} 
		\label{lem:greedyischain} 
		For each $w\in S_n$, $\CG(w)$ is a climbing chain of $w$.
	\end{lemma}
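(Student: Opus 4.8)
The plan is to induct on the co-length $\ell(w_0)-\ell(w)$. The base case $w=w_0$ is immediate: $\CG(w_0)$ is the empty sequence, which is a climbing chain by convention. For the inductive step, write $\greedy(w)=(a,b)$ with $a=\min\{k\mid w(k)\neq n+1-k\}$ and $b=w^{-1}(w(a)+1)$, and plan to verify (i) that $w\lessdot wt_{ab}$ is a genuine cover relation, and (ii) that prepending $(a,b)$ to the (inductively available) chain $\CG(wt_{ab})$ satisfies all four conditions in the definition of a climbing chain.

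For (i) I would apply the criterion of Lemma \ref{lem:cover}. Two of its three hypotheses are essentially free: $w(b)=w(a)+1$ by the definition of $b$, so $w(a)<w(b)$; and there is no integer strictly between $w(a)$ and $w(a)+1$, so the third (intermediate-set) condition is vacuous. The only point needing an argument is $a<b$. Here I would use the ``staircase prefix'' forced by the minimality of $a$: positions $1,\dots,a-1$ carry the values $n,n-1,\dots,n-a+2$, hence positions $a,\dots,n$ carry exactly $\{1,\dots,n-a+1\}$; since $w(a)\neq n+1-a$ this forces $w(a)\le n-a$, so $w(a)+1\le n-a+1$ and therefore $w(a)+1$ occurs at some position $\ge a$. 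It is not position $a$ itself, so $b=w^{-1}(w(a)+1)>a$. This establishes $w\lessdot wt_{ab}$, and in particular $\ell(wt_{ab})=\ell(w)+1$, so the inductive hypothesis gives that $\CG(wt_{ab})$ is a climbing chain of $wt_{ab}$ (or is empty, if $wt_{ab}=w_0$).

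For (ii) I would note that three of the four defining conditions transfer immediately: the product of transpositions telescopes to $w_0$, the inequality $a<b$ was just shown, and the cover relations beyond the first link come from the inductive hypothesis while the first cover relation is part (i). The remaining condition, weak increase of the first coordinates, amounts to showing that $a$ is at most the first coordinate $a'$ of the leading link of $\CG(wt_{ab})$; but $t_{ab}$ changes $w$ only in positions $a$ and $b$, both $\ge a$, so $wt_{ab}(k)=w(k)=n+1-k$ for all $k<a$, whence $a'\ge a$. Termination of the recursion follows since the co-length strictly decreases at each step, completing the induction.

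I do not expect a serious obstacle here; the only mild subtlety is the bookkeeping for $a<b$ (the staircase-prefix observation), together with its companion fact that the greedy index is non-decreasing along the chain, which together pin down exactly why the greedily chosen links never violate the monotonicity requirement $i_1\le i_2\le\cdots\le i_m$.
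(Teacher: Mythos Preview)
Your proof is correct. The paper leaves this lemma to the reader as ``straightforward,'' and your induction on $\ell(w_0)-\ell(w)$ together with the staircase-prefix argument for $a<b$ and the observation that the greedy first index is non-decreasing is exactly the routine verification the authors have in mind.
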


	\begin{lemma}
		\label{lem:crecursion}
		Fix $w\in S_n$ with $w\neq w_0$. Suppose $\greedy(w)=(a,b)$ and set $w'=wt_{ab}$. Then
		\[
		\cc(w) = \cc(w')-e_a.
		\]
	\end{lemma}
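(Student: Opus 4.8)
The plan is to understand how the Lehmer code changes under the specific transposition $t_{ab}$ with $(a,b)=\greedy(w)$, using the graphical (hook) description of the Rothe diagram. Recall that $a=\min\{k\mid w(k)\neq n+1-k\}$, so rows $1,\dots,a-1$ of $D(w)$ are completely filled (their dots sit in columns $n,n-1,\dots,n-a+2$), and $b=w^{-1}(w(a)+1)$ is the row of the dot in the column immediately to the right of the dot in row $a$. Passing from $w$ to $w'=wt_{ab}$ swaps the column positions of the dots in rows $a$ and $b$: the row-$a$ dot moves from column $w(a)$ to column $w(a)+1$, and the row-$b$ dot moves from column $w(a)+1$ to column $w(a)$.

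First I would check that this is a valid cover relation (so $\cc(w')$ is literally "one more inversion" worth of boxes than $\cc(w)$, distributed among the rows): since columns $w(a)$ and $w(a)+1$ are adjacent, the condition $\{k\mid a<k<b\}\cap\{k\mid w(a)<w(k)<w(a)+1\}=\emptyset$ holds vacuously (the second set is empty), so $w\lessdot w'$ by Lemma~\ref{lem:cover}, and hence $|\cc(w')|=|\cc(w)|+1$. Next I would argue that the swap affects only row $a$ among the code entries. For a row $i\ne a,b$: the dot in row $i$ is unmoved, and the set of hooks passing through row $i$'s boxes is unchanged by the swap, because the two dots that moved stayed within the two-column band $\{w(a),w(a)+1\}$ and merely exchanged places there — any hook coverage of a box in row $i$ coming from column $\le w(a)$ or from a row above is unaffected, and one checks the band itself contributes the same coverage to row $i$ before and after. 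Thus $\cc(w')_i=\cc(w)_i$ for $i\ne a,b$. For row $b$: by definition of $a$ as the first non-antidiagonal row, and a short argument that the dot in row $b$ lies weakly below/left in a way forced by minimality of $a$, the boxes of $D(w)$ in row $b$ versus $D(w')$ in row $b$ coincide — moving the row-$b$ dot one column left (into column $w(a)$, which was the old row-$a$ dot's column) neither creates nor destroys boxes in row $b$, since column $w(a)$ in row $b$ was already hook-covered (by the vertical hook of the old row-$a$ dot) and column $w(a)+1$ in row $b$ becomes covered by the new vertical hook from row $b$ itself but was the dot's own cell before. So $\cc(w')_b=\cc(w)_b$. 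Finally, for row $a$: moving the row-$a$ dot from column $w(a)$ to column $w(a)+1$ exposes exactly one new box, namely the cell $(a,w(a))$ — it was the dot's cell in $D(w)$ (hence not a box) and in $D(w')$ it is strictly west of the new dot position and, by minimality of $a$ together with the fact that column $w(a)=w'(b)$ now holds row $b$'s dot below row $a$, it is not hit by any hook; one also checks no box of row $a$ disappears. Hence $\cc(w')_a=\cc(w)_a+1$, i.e.\ $\cc(w)=\cc(w')-e_a$.

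Alternatively — and this may be cleaner to write — I would use the symbolic formula $\cc(w)_i=\#\{j>i\mid w(j)<w(i)\}$ directly. Writing $c=w(a)$, so $w'=w$ with the values $c$ and $c+1$ transposed in positions $a$ and $b$: for $i\notin\{a,b\}$ the multiset of values $\{w(j):j>i\}$ is unchanged and $w(i)=w'(i)$, so the count is unchanged unless $w(i)\in\{c,c+1\}$ and exactly one of the two positions $a,b$ exceeds $i$; ruling this out uses that rows $1,\dots,a-1$ carry the values $n,\dots,n-a+2>c+1$ and that $b$ is forced (by the cover/adjacency condition and minimality of $a$) to be the position of value $c+1$ with $a<b$. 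For $i=b$: $w'(b)=c$ while $w(b)=c+1$, and one value at position $a<b$ dropped from $c+1$ to $c$; the net change in $\#\{j>b:w'(j)<w'(b)\}$ is zero. For $i=a$: $w'(a)=c+1>c=w(a)$, and among $j>a$ the value at position $b$ changed from $c+1$ (not counted, $\ge w(a)$... but now $<w'(a)$? no, $c+1=w'(a)$, still not counted) — the only newly-counted index is... here one sees the count goes up by exactly $1$, contributed by position $b$ holding value $c<c+1=w'(a)$ whereas before position $b$ held $c+1\not<c=w(a)$.

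The main obstacle I anticipate is pinning down the behavior at rows $a$ and $b$ rigorously — in particular verifying that $b$ is the *unique* position with $a<b$ and $w(b)=c+1$ that is relevant, and that no subtle hook interaction in rows strictly between $a$ and $b$ (or below $b$) changes a code entry. This is exactly where minimality of $a$ (forcing the full antidiagonal prefix) and the adjacency $w'(a)=w(a)+1$ are essential, and I would isolate this as a short sublemma about the greedy pair before doing the code bookkeeping.
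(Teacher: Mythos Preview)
Your graphical approach is essentially the paper's own proof, just expanded: the paper's one-sentence argument is that swapping the adjacent-column hooks in rows $a$ and $b$ swaps all boxes in those two columns and creates exactly one new box at $(a,w(a))$, which is precisely what your row-by-row case analysis verifies. Your symbolic alternative via $\cc(w)_i=\#\{j>i\mid w(j)<w(i)\}$ is correct as well and is not in the paper; it trades the picture for a direct count, and the key observation in both is simply that $w(b)=w(a)+1$ makes the value-swap invisible to every row except row $a$, where position $b$ becomes newly counted.
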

	\begin{proof}
		In $D(w)$, swapping the hooks in rows $a$ and $b$, that is columns $w(a)$ and $w(b)$, swaps all boxes in those columns, and adds a new box at position $(a,w(a))$.
	\end{proof}

	\begin{theorem}
		\label{thm:lehmerequality}		
		Fix any $w\in S_n$. Set $\xi(w)$ to be the chain $\CG(w)$ with all links marked, that is \[\xi(w)=(\CG(w),\CG(w)).\] Then \[\wt(\xi(w)) = \cc(w). \]
	\end{theorem}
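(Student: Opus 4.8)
The plan is a direct induction on $\ell(w_0)-\ell(w)$, which is the common length of all climbing chains of $w$ and in particular of $\CG(w)$. The key ingredients will be the recursive definition of $\CG(w)$ (prepend $\greedy(w)$ to $\CG(wt_{ab})$), the recursion $\cc(w)=\cc(w')-e_a$ from Lemma~\ref{lem:crecursion}, and the trivial observation that when every link of a chain is marked, the dual weight just counts links by their first coordinate.

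First I would handle the base case $w=w_0$. Here $\CG(w_0)$ is the empty sequence, so $\dwt(\xi(w_0))=(0,\ldots,0)$ and hence $\wt(\xi(w_0))_k=n-k$ for every $k\in[n]$. Since $w_0$ is strictly decreasing, $\cc(w_0)_k=n-k$ as well, so the two vectors agree.

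For the inductive step, suppose $w\neq w_0$, write $\greedy(w)=(a,b)$ and $w'=wt_{ab}$, so that $\CG(w)$ is obtained by prepending the link $(a,b)$ to $\CG(w')$. Because every link of $\xi(w)=(\CG(w),\CG(w))$ is marked, $\dwt(\xi(w))_k$ is exactly the number of links of $\CG(w)$ with first coordinate $k$; comparing with $\xi(w')=(\CG(w'),\CG(w'))$, prepending the single link $(a,b)$ adds precisely one link with first coordinate $a$, so
\[\dwt(\xi(w))=\dwt(\xi(w'))+e_a,\]
hence $\wt(\xi(w))=\wt(\xi(w'))-e_a$. By the inductive hypothesis $\wt(\xi(w'))=\cc(w')$, and by Lemma~\ref{lem:crecursion} $\cc(w')-e_a=\cc(w)$, so $\wt(\xi(w))=\cc(w)$, completing the induction.

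I do not expect a genuine obstacle here. The only points requiring care are that $(\CG(w),\CG(w))$ is a legitimate marked climbing chain (immediate, since $M(\CG(w))\subseteq\CG(w)$ and $\CG(w)$ is a climbing chain by Lemma~\ref{lem:greedyischain}) and keeping the direction of the $e_a$ shift straight between $\dwt$ and $\wt$. One could alternatively route the argument through Lemma~\ref{lem:markingrecursion}, but since all links are marked the elementary link-counting above is cleaner and self-contained.
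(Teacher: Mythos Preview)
Your proof is correct, but it takes a different route from the paper's. The paper argues directly: starting from the observation that the greedy chain moves the dot in row $i_1$ one column to the right at each step until it reaches column $n-i_1+1$, it counts that there are exactly $n-i_1-\cc(w)_{i_1}$ links of the form $(i_1,\star)$, and then iterates this counting row by row (using Lemma~\ref{lem:crecursion} only to argue that box counts in later rows are unaffected). Your argument instead runs a clean induction on $\ell(w_0)-\ell(w)$, peeling off the first greedy link and invoking Lemma~\ref{lem:crecursion} as a genuine recursion $\cc(w)=\cc(w')-e_a$ matched against the obvious $\dwt(\xi(w))=\dwt(\xi(w'))+e_a$. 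Your approach is more streamlined and, notably, exactly parallels the structure the paper itself uses later for the nested chain in Theorem~\ref{thm:strongequality} (via Lemma~\ref{lem:markingrecursion} and Theorem~\ref{thm:drecursion}); the paper's direct argument, on the other hand, makes the column-by-column motion of the greedy chain more visible and avoids setting up the inductive frame.
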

	\begin{proof}
		Since $i_1=\min\{k\mid w(k)\neq n+1-k\}$, it follows that $\cc(w)_k = n-k$ for $k<i_1$, and $\cc(w)_{i_1} = w(i_1)-1$. The greedy chain will move the hook in row $i_1$ to the immediate next column until it hits column $n-i_1+1$. The hook starts in column $w(i_1)$, so this will take $n-i_1+1-w(i_1) = n-i_1-\cc(w)_{i_1}$ many steps. At this point,
		the greedy chain will start moving the hook in a later row. Iterating this argument, the greedy chain will take $n-i_p-\cc(w)_{i_p}$ many steps of the form $(i_p,\star)$ for any $i_p$ that it takes such a step. 
		
		When the greedy chain takes a step $(i_p,\star)$, Lemma \ref{lem:crecursion} shows that it does not change the number of boxes in more southern rows. Suppose the greedy chain does not take a step of the form $(i_p,\star)$. The last time it takes a step in a more northern row, the number of boxes in row $i_p$ is still the same as in $D(w)$. But the greedy chain then proceeds to a more southern row, so the number of boxes in row $i_p$ must already be $n-i_p$. Thus, $\cc(w)_{i_p}=n-i_p$, and the greedy chain does indeed take $0=n-i_p-\cc(w)_{i_p}$ many steps of the form $(i_p,\star)$.
	\end{proof}

	\section{The Nested Chain} \label{sec:nested}
	
	We construct a particular climbing chain which we refer to as the nested chain. We show that the nested chain yields the monomial $\bm{x}^{\rr(w)}$. We show in Section \ref{sec:rajcode} that this monomial is the leading monomial of the highest-degree homogeneous component of the Grothendieck polynomial in any term order satisfying $x_1<x_2<\cdots<x_n$, giving an alternate proof of Theorem \ref{thm:pswleadingterm}.

	\begin{definition}
		To each $w\in S_n$ we associate a pair of integers $\nested(w)$, called the \emph{nested pair} of $w$, as follows. The nested pair of $w_0$ is undefined.
		If $w\neq w_0$, set $q=\min\{j\mid w(j)\neq n+1-j\}$. Let $\alpha$ be the lexicographically last element of $\LIS(w,q)$ (Definition \ref{def:1}). Then $\nested(w)=(\alpha_1,\alpha_2)$.
	\end{definition}
	
	\begin{definition}
		\label{def:nestedchain}
		Define the \emph{nested climbing chain} $\CN(w)$ of $w\in S_n$ as follows. If $w=w_0$, then $\CN(w)$ is the empty sequence. Otherwise, let $\nested(w)=(a,b)$. Define $\CN(w)$ inductively by prepending $(a,b)$ to $\CN(wt_{ab})$. 
	\end{definition}

	The following example gives a graphical interpretation for $\nested(w)$ in terms of nested hooks in $D(w)$.
	
	\begin{example}
		Continuing Example \ref{exp:31452marked} with $w=256341$, we compute $\CN(w)$, $M(\CN(w))$, and $\orr(w)$. For reference, the Rothe diagram of $w$ is
		\begin{center}
			\begin{tikzpicture}[scale=.55]
			\draw (0,0)--(6,0)--(6,6)--(0,6)--(0,0);
			\draw[draw=red] (6,5.5) -- (1.5,5.5) node[red] {$\bullet$} -- (1.5,0);
			\draw[draw=red] (6,4.5) -- (4.5,4.5) node[red] {$\bullet$} -- (4.5,0);
			\draw[draw=red] (6,3.5) -- (5.5,3.5) node[red] {$\bullet$} -- (5.5,0);
			\draw[draw=red] (6,2.5) -- (2.5,2.5) node[red] {$\bullet$} -- (2.5,0);
			\draw[draw=red] (6,1.5) -- (3.5,1.5) node[red] {$\bullet$} -- (3.5,0);
			\draw[draw=red] (6,0.5) -- (0.5,0.5) node[red] {$\bullet$} -- (0.5,0);
			
			\filldraw[draw=black,fill=lightgray] (0,5)--(1,5)--(1,6)--(0,6)--(0,5);
			\filldraw[draw=black,fill=lightgray] (0,4)--(1,4)--(1,5)--(0,5)--(0,4);
			\filldraw[draw=black,fill=lightgray] (0,3)--(1,3)--(1,4)--(0,4)--(0,3);
			\filldraw[draw=black,fill=lightgray] (0,2)--(1,2)--(1,3)--(0,3)--(0,2);
			\filldraw[draw=black,fill=lightgray] (0,1)--(1,1)--(1,2)--(0,2)--(0,1);
			\filldraw[draw=black,fill=lightgray] (2,4)--(3,4)--(3,5)--(2,5)--(2,4);
			\filldraw[draw=black,fill=lightgray] (3,4)--(4,4)--(4,5)--(3,5)--(3,4);
			\filldraw[draw=black,fill=lightgray] (2,3)--(3,3)--(3,4)--(2,4)--(2,3);
			\filldraw[draw=black,fill=lightgray] (3,3)--(4,3)--(4,4)--(3,4)--(3,3);
			
			\draw (0,1)--(6,1);
			\draw (0,2)--(6,2);
			\draw (0,3)--(6,3);
			\draw (0,4)--(6,4);
			\draw (0,5)--(6,5);
			
			\draw (1,0)--(1,6);
			\draw (2,0)--(2,6);
			\draw (3,0)--(3,6);
			\draw (4,0)--(4,6);
			\draw (5,0)--(5,6);
			
			\node at (-1.5,2.5) {$D(w)=$};
			\node at (6.3,2.5) {.};
			\end{tikzpicture}
		\end{center} 
		We have $\LIS(w,1)=\{(1,2,3),(1,4,5)\}$, so $\CN(w)_1=(1,4)$. Graphically, this amounts to finding the southmost sequence among all longest sequences of hooks nested under $(1,w(1))$.		
		
		Next, $wt_{14} = 356241$, so $\LIS(wt_{14},1) = \{(1,2,3)\}$ and $\CN(w)_2=(1,2)$. Then, $wt_{14}t_{12} = 536241$, so $\LIS(wt_{14}t_{12},1) = \{(1,3)\}$ and $\CN(w)_3=(1,3)$. Continuing in this fashion yields 
		\[\CN(w)=((1,4),(1,2),(1,3),(2,5),(2,3),(4,5)). \]
		This is exactly the chain $C^{(2)}$ from Example \ref{exp:31452unmarked}. One also  observes 
		\begin{align*}
			M(\CN(w)) &= \{(1,4),(1,3),(2,5),(4,5)\}, \mbox{ and}\\
			\orr(w) &= (2,1,0,1,0,0) = \dwt(C^N(w),M(\CN(w))).
		\end{align*}
	\end{example}
	
	\begin{lemma}
		\label{lem:nestedischain}
		For each $w\in S_n$, $\CN(w)$ is a climbing chain of $w$.
	\end{lemma}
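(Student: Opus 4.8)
The plan is to induct on $N=\ell(w_0)-\ell(w)$, following the recursive definition of $\CN(w)$. The base case $N=0$ forces $w=w_0$, for which $\CN(w)$ is the empty sequence, vacuously a climbing chain. For the inductive step assume $w\neq w_0$; write $q=\min\{j\mid w(j)\neq n+1-j\}$ and let $\alpha=(\alpha_1,\ldots,\alpha_m)$ be the lexicographically last element of $\LIS(w,q)$. First I would check that $\nested(w)$ is well defined, i.e.\ that $\alpha$ has length $m\geq 2$ so that $\alpha_2$ exists: since $w(j)=n+1-j$ for all $j<q$, the values $n,n-1,\ldots,n-q+2$ fill positions $1,\ldots,q-1$, so $w(q)\leq n-q$ (because $w(q)\neq n+1-q$) and the value $n+1-q$ must sit in some position $p>q$; then $(q,p)$ is an increasing subsequence of length $2$ starting at $q$. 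Now set $(a,b):=(\alpha_1,\alpha_2)$, so that $a=q$, and put $w':=wt_{ab}$, so that $\CN(w)$ is the link $(a,b)$ prepended to $\CN(w')$.

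The heart of the argument is checking $w\lessdot w'=wt_{ab}$ by Lemma~\ref{lem:cover}. Its first two conditions hold because $\alpha$ is an increasing subsequence: $a<b$ and $w(a)<w(b)$. For the third condition, suppose for contradiction that some $k$ satisfies $a<k<b$ and $w(a)<w(k)<w(b)$; then $(\alpha_1,k,\alpha_2,\alpha_3,\ldots,\alpha_m)$ is an increasing subsequence of $w$ starting at $q$ and strictly longer than $\alpha$, contradicting $\alpha\in\LIS(w,q)$. Hence $w\lessdot w'$, so $\ell(w')=\ell(w)+1$ and $\ell(w_0)-\ell(w')=N-1$; the inductive hypothesis then says $\CN(w')$ is a climbing chain of $w'$.

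It remains to see that prepending the link $(a,b)$ to the climbing chain $\CN(w')$ preserves all four climbing chain axioms for $w$. Three of them are immediate: $a<b$ was shown; by the inductive hypothesis the links of $\CN(w')$ multiply $w'$ to $w_0$, hence the links of $\CN(w)$ multiply $w$ to $w_0$; and each consecutive pair along $\CN(w)$ is a cover relation, namely $w\lessdot w'$ at the front and the covers of $\CN(w')$ thereafter. The only point requiring an argument is that the first coordinates remain weakly increasing, i.e.\ $a\leq q'$ where $q'$ is the first coordinate of the first link of $\CN(w')$ (when $\CN(w')$ is nonempty). By definition of $\nested$, $q'=\min\{j\mid w'(j)\neq n+1-j\}$, and since $t_{ab}$ moves only positions $a$ and $b$ we have $w'(j)=w(j)=n+1-j$ for every $j<a=q$; therefore $q'\geq q=a$. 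This closes the induction.

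I expect the third condition of the cover check — ruling out an intermediate position $k$ — to be the only real obstacle, since it is the sole place where the maximality of a longest increasing subsequence is used, and it is exactly what upgrades $(\alpha_1,\alpha_2)$ from a mere Bruhat relation to a genuine cover. Everything else is bookkeeping: well-definedness of $\nested(w)$, and the fact that $t_{ab}$ leaves the $w_0$-agreeing prefix of $w$ untouched. It is worth noting that the lexicographic-last choice of $\alpha$ plays no role in this lemma — any $\alpha\in\LIS(w,q)$ would work — although it will be essential for the later computations of the dual weight of the nested chain.
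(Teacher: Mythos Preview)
Your proof is correct and follows essentially the same approach as the paper: both induct on $\ell(w_0)-\ell(w)$, use the maximality of the longest increasing subsequence to rule out an intermediate $k$ in the cover check, and finish by observing that the $w_0$-agreeing prefix is preserved so that $i_1\leq i_2$. Your version is more detailed---in particular you verify that $\nested(w)$ is well defined and explicitly check each climbing chain axiom---and your closing remark that the lexicographic-last choice is irrelevant here is a correct and useful observation.
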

	\begin{proof}
		If $w=w_0$, there is nothing to prove. Otherwise, let $\CN(w)_p=(i_p,j_p)$ for each $p$ and set $w'=wt_{i_1j_1}$. We show that $w\lessdot w'$. If not, then there is $p$ such that $i_1<p<j_1$ and $w(i_1)<w(p)<w(j_1)$. However, this would contradict the choice of $j_1$. 
		
		We now prove the lemma by induction on $m=\ell(w_0)-\ell(w)$. If $m=0$, then $w=w_0$ and the result is trivial. Assume the lemma holds for all $m'<m$. Suppose $\ell(w)=\ell(w_0)-m$. Since $\CN(w')$ is a truncation of $\CN(w)$, by induction it is enough to check that $i_1\leq i_2$, which follows easily from the definition of $\CN(w)$.
	\end{proof}
	
	 \begin{figure}[hb]
	 	\begin{center}
	 		\includegraphics[scale=1]{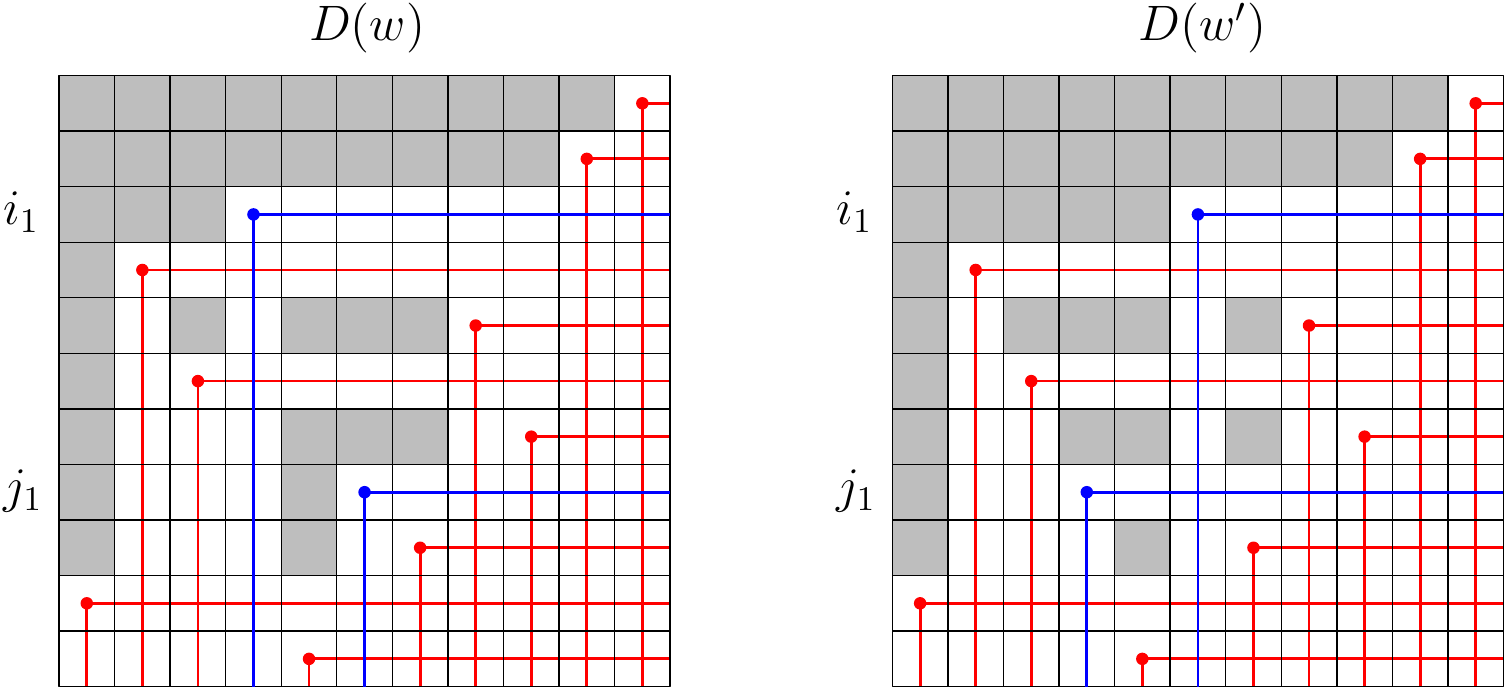}
	 	\end{center}
	 	\caption{A visual aid for the structure of $w$ and $w'$ in the proof of Theorem \ref{thm:drecursion}.}
	 	\label{fig:bargraphs}
	 \end{figure}
 
	We now prove a recursive formula for the vectors $\orr(w)$. In Figure \ref{fig:bargraphs}, we offer a visual aid for the technical arguments used to prove the recursion.
	
	\begin{theorem}
		\label{thm:drecursion}
		Fix $w\in S_n$ with $w\neq w_0$. 
		Suppose $\CN(w)_p=(i_p,j_p)$ for $p\in [\ell(\CN(w))]$, and set $w'=wt_{i_1j_1}$. Then
		\[
		\orr(w) = 
		\begin{cases}
		\orr(w') &\mbox{if } \ell(\CN(w))>1 \mbox{ and }(i_2,j_2)\notin M(\CN(w)),\\
		\orr(w')+e_{i_1}&\mbox{otherwise}.
		\end{cases} 
		\]
	\end{theorem}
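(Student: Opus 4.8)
The plan is to track the longest increasing subsequences directly as one passes from $w$ to $w'=wt_{i_1j_1}$. Write $q=i_1=\min\{j:w(j)\neq n+1-j\}$ and let $\alpha=(q=\alpha_1,\alpha_2,\dots,\alpha_m)$ be the lexicographically last member of $\LIS(w,q)$, so that $j_1=\alpha_2$ and $\orr(w)_q=m-1$. Recall $w(k)=n+1-k$ for $k<q$, that positions $q,\dots,n$ of $w$ carry the values $\{1,\dots,n+1-q\}$ with $w(q)<n+1-q$, and that $w'$ is obtained from $w$ simply by exchanging the (increasing) values in positions $q$ and $j_1$; Figure \ref{fig:bargraphs} depicts this. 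I would first record two facts. Fact (i): $\orr(w)_{j_1}=m-2$, since prepending $q$ turns any increasing subsequence out of $j_1$ into one out of $q$, while $(\alpha_2,\dots,\alpha_m)$ supplies the matching lower bound. Fact (ii), the key use of the lexicographic choice of $\alpha$: $\orr(w)_c\le m-3$ for every position $c$ with $j_1<c\le n$ and $w(c)>w(q)$, for otherwise prepending $q$ to a longest increasing subsequence out of $c$ would produce a member of $\LIS(w,q)$ with second coordinate exceeding $\alpha_2$.

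The heart of the proof is that $\orr(w')_k=\orr(w)_k$ for all $k\neq q$. For $k<q$ both entries equal $0$; for $k>j_1$ the increasing subsequences out of $k$ are literally unchanged; for $k=j_1$ one combines Fact (i), the chain $(j_1,\alpha_3,\dots,\alpha_m)$, and the maximality of $\alpha_2$. The one delicate range is $q<k<j_1$. For ``$\le$'': an increasing subsequence of $w'$ out of $k$ avoiding $j_1$ is already increasing in $w$; one passing through $j_1$ splits there into a prefix (values below $w(q)$, hence concatenable with $(\alpha_2,\dots,\alpha_m)$ into an increasing subsequence of $w$ out of $k$) and a suffix (short by Fact (ii)), and adding the two resulting length bounds gives $\orr(w')_k\le\orr(w)_k$. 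For ``$\ge$'': take $\gamma\in\LIS(w,k)$; if $\gamma$ avoids $j_1$ it is increasing in $w'$ already, and if $\gamma$ meets $j_1$ I claim it is still increasing in $w'$ --- otherwise the predecessor of $j_1$ in $\gamma$ has value strictly between $w(q)$ and $w(j_1)$, and then regluing $\gamma$ once with $q$ prepended and once with $(\alpha_2,\dots,\alpha_m)$ appended yields two contradictory bounds on the position of $j_1$ inside $\gamma$. I expect this last comparison to be the main obstacle; it is precisely the ``$w$ versus $w'$'' bookkeeping that Figure \ref{fig:bargraphs} is drawn to organize.

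It then remains to compute the $q$-th coordinate. The chain $(q,\alpha_3,\dots,\alpha_m)$ is increasing in $w'$, so $\orr(w')_q\ge m-2$, while raising the value in position $q$ cannot lengthen an increasing subsequence out of $q$, so $\orr(w')_q\le m-1$. Hence $\orr(w)=\orr(w')+c\,e_q$ with $c\in\{0,1\}$, and what is left is to identify when $c=1$. If $\ell(\CN(w))=1$ then $w'=w_0$, and one checks that $j_1=q+1$ and $m=2$, so $c=1$, matching the ``otherwise'' case. If $\ell(\CN(w))>1$, set $(i_2,j_2)=\nested(w')$. If $w'(q)=n+1-q$ --- equivalently $w(j_1)$ is the largest available value, which forces $m=2$ --- then $i_2>i_1$, so $(i_2,j_2)\in M(\CN(w))$ and indeed $c=1$; otherwise $i_2=q$ (and then $\orr(w')_q\ge1$), so $j_2$ is the second coordinate of the lex-last member of $\LIS(w',q)$, and I would compare it with $j_1$: when $c=1$ the chain $(q,\alpha_3,\dots,\alpha_m)$ shows $j_2\ge\alpha_3>j_1$, giving $(i_2,j_2)\in M(\CN(w))$; when $c=0$, every length-$m$ increasing subsequence of $w'$ out of $q$ must have second coordinate below $j_1$ (inserting $j_1$ otherwise would contradict $\orr(w)_q=m-1$), so $j_2<j_1$ and $(i_2,j_2)\notin M(\CN(w))$. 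Reading these cases against the definition of $M(\CN(w))$ gives exactly the stated recursion, which parallels the marking recursion of Lemma \ref{lem:markingrecursion}.
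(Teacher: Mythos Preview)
Your proof is correct and follows essentially the same route as the paper's: show $\orr(w)_k=\orr(w')_k$ for all $k\neq i_1$ by treating the ranges $k<i_1$, $k>j_1$, $k=j_1$, and $i_1<k<j_1$ separately, then pin down $\orr(w)_{i_1}-\orr(w')_{i_1}\in\{0,1\}$ and match the two values to the marking dichotomy. The main organizational differences are that you isolate Facts~(i) and~(ii) at the outset as reusable tools, and that in the range $i_1<k<j_1$ you derive the needed inequality by a length-counting contradiction (splicing $q$ onto a suffix of $\gamma$ and $(\alpha_2,\dots,\alpha_m)$ onto a prefix) rather than invoking the Bruhat cover condition $w\lessdot w'$ to split into the two cases $w(k)<w(q)$ and $w(k)>w(j_1)$ as the paper does. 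Both arguments are short; yours has the mild advantage of being self-contained in terms of increasing-subsequence bookkeeping, while the paper's use of the cover relation makes the ``$\geq$'' direction a one-liner.
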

	\begin{proof}
		By the definition of climbing chains, $i_1$ is the earliest position in which $w$ differs from $w_0$. Then for $k<i_1$, we have $w(k) = w'(k) = n+1-k$. Thus, $\orr(w)_k=0=\orr(w')_k$ whenever $k<i_1$. Additionally, since $w$ and $w'$ agree after position $j_1$, $\orr(w)_k=\orr(w')_k$ whenever $k> j_1$.
		
		To show $\orr(w)_{j_1} = \orr(w')_{j_1}$, we argue by contradiction. Clearly $\orr(w)_{j_1}\leq \orr(w')_{j_1}$, so necessarily $\orr(w)_{j_1}<\orr(w')_{j_1}$. Since $i_1$ and $j_1$ occur consecutively in an element of $\LIS(w,i_1)$, it must be that $\orr(w)_{i_1}=\orr(w)_{j_1}+1$. Thus, $\orr(w)_{i_1}\leq \orr(w')_{j_1}$.
		
		On the other hand, take $\alpha\in\LIS(w',j_1)$. Replacing $\alpha_1=j_1$ by $i_1$ in $\alpha$ yields an increasing sequence $\beta$ in $w$ that starts from $i_1$ and has length $\orr(w')_{j_1}+1$. Hence, $\orr(w)_{i_1}+1\geq \orr(w')_{j_1}+1$. Thus we have $\orr(w)_{i_1}=\orr(w')_{j_1}$. Consequently $\beta\in \LIS(w,i_1)$. However $\beta_2=\alpha_2>j_1$, which contradicts the choice of $(i_1,j_1)$ as lexicographically last among $\LIS(w,i_1)$. This contradiction shows $\orr(w)_{j_1}=\orr(w')_{j_1}$.
		
		Now, fix any $k$ such that $i_1<k<j_1$. As $w\lessdot w'$, it follows that either $w(k)<w(i_1)$ or $w(k)>w(j_1)$. Take $\alpha\in\LIS(w,k)$. If $w(k)>w(j_1)$ then $\alpha$ does not include $j_1$, so $\alpha\in\LIS(w',k)$ as well. In this case, $\orr(w)_k=\orr(w')_k$.
		
		Otherwise, suppose $w(k)<w(i_1)$. Clearly, $\orr(w)_k\leq \orr(w')_k$. To reach a contradiction, suppose $\orr(w)_k<\orr(w')_k$. It follows that every $\alpha\in\LIS(w',k)$ includes $j_1$. Fix any $\alpha\in\LIS(w',k)$, and suppose $\alpha_p=j_1$. Since $\orr(w)_{j_1} = \orr(w')_{j_1}$, we can find an element $\beta\in \LIS(w,j_1)\cap \LIS(w',j_1)$. Construct a sequence $\gamma$ by letting $\gamma_r=\alpha_r$ for $r\in [p]$, and $\gamma_r=\beta_r$ for $r> p$. It follows that $\gamma$ has the same length as $\alpha$ and that $\gamma$ is an increasing subsequence of $w$. Consequently, $\orr(w')_k+1\leq \orr(w)_k+1$, a contradiction to our assumption that $\orr(w)_k<\orr(w')_k$. Hence, $\orr(w)_k=\orr(w')_k$.
		
		We now address the remaining case $k=i_1$. Suppose first that $\ell(\CN(w))>1$ and $(i_2,j_2)\notin M(\CN(w))$. It follows that $i_1=i_2$ and $j_1>j_2$. Then from the construction of $\CN(w)$, we see $i_1<j_2<j_1$ with $w(i_1)<w(j_1)<w(j_2)$. 
		
		Clearly $\orr(w)_{i_1}\geq \orr(w')_{i_1}$. Take $\alpha \in \LIS(w,i_1)$, with $\alpha_2=j_1$. Let $\beta$ be $\alpha$ with $\alpha_2=j_1$ dropped. Then $\beta$ is an increasing sequence in $w'$, so $\orr(w')_{i_1}\geq \orr(w)_{i_1}-1$. Thus, $\orr(w)_{i_1}\geq \orr(w')_{i_1}\geq \orr(w)_{i_1}-1$. To reach a contradiction, suppose that $\orr(w')_{i_1}= \orr(w)_{i_1}-1$. Then it must be that $\beta\in \LIS(w',i_1)$. The choice of $j_2$ requires that $\beta_2\leq j_2$. However, $\beta_2=\alpha_3>j_1$. Thus, $j_2>j_1$, a contradiction. Hence $\orr(w)_{i_1} = \orr(w')_{i_1}$.
		
		It remains to show that whenever $\ell(\CN(w))=1$ or $(i_2,j_2)\in M(\CN(w))$, we have $\orr(w)_{i_1} = \orr(w')_{i_1}+1$. When $\ell(\CN(w))=1$, we have $w'=w_0$. In this case, it is easy to see that $\orr(w)_{i_1}=1$ as needed. Suppose then that $\ell(\CN(w))>1$ and $(i_2,j_2)\in M(\CN(w))$. This implies that either $i_1<i_2$, or $i_1=i_2$ with $j_1<j_2$.
		
		Assume first that $i_1=i_2$ and $j_1<j_2$. From the construction of $\CN(w)$, we see $i_1<j_1<j_2$ with $w(i_1)<w(j_1)<w(j_2)$. 
		The choice of $j_1$ asserts that exists $\alpha\in \LIS(w,i_1)$ with $\alpha_2=j_1$. Hence $\orr(w)_{i_1}\leq \orr(w')_{i_1}+1$. We prove the reverse inequality. The choice of $j_2$ asserts that exists $\beta\in \LIS(w',i_1)$ with $\beta_2=j_2$. Inserting $j_1$ into $\beta$ produces an increasing subsequence of $w$ starting from $i_1$ of length $\orr(w')_{i_1}+2$, so $\orr(w)_{i_1}+1\geq \orr(w')_{i_1}+2$. This concludes the case $i_1=i_2$ and $j_1<j_2$.
		
		Now, assume $i_1<i_2$. The construction of $\CN(w)$ then implies that 
		\[w(j_1) = \max\{w(i_1),w(j_1+1),\ldots, w(n) \}.\] 
		Since there must be some $\alpha\in \LIS(w,i_1)$ that includes $j_1$, it follows that $\alpha = (i_1,j_1)$ is the only such sequence. Hence $\orr(w)_{i_1} = 1$ and $\orr(w')_{i_1} =0$ as needed.
	\end{proof}
	
	\begin{theorem}
		\label{thm:strongequality}
		Fix any $w\in S_n$ and let $\xi(w)=(\CN(w),M(\CN(w)))$. Then \[\dwt(\xi(w)) = \orr(w).\]
	\end{theorem}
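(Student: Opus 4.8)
The plan is to induct on $m=\ell(w_0)-\ell(w)$, combining the marking recursion of Lemma \ref{lem:markingrecursion} with the $\orr$ recursion of Theorem \ref{thm:drecursion}. The whole point is that both recursions are controlled by \emph{exactly the same} case distinction, so if the two vectors agree for $w'$ they must agree for $w$.

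\textbf{Base case.} If $m=0$ then $w=w_0$: the only climbing chain is empty, $M(\CN(w_0))=\emptyset$, so $\dwt(\xi(w_0))$ is the zero vector; and each $\alpha\in\LIS(w_0,k)$ has length $1$, so $\orr(w_0)$ is the zero vector as well. Hence $\dwt(\xi(w_0))=\orr(w_0)$.

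\textbf{Inductive step.} Let $w\neq w_0$ and assume the claim for all permutations strictly above $w$ in Bruhat order. Write $\CN(w)_p=(i_p,j_p)$ and set $w'=wt_{i_1j_1}$. By Definition \ref{def:nestedchain} the chain obtained from $\CN(w)$ by deleting its first link $C_1=(i_1,j_1)$ is precisely $\CN(w')$; it is a climbing chain of $w'$ by Lemma \ref{lem:nestedischain}, and $\ell(w_0)-\ell(w')=m-1$, so the inductive hypothesis yields $\dwt(\CN(w'),M(\CN(w')))=\orr(w')$. Now apply Lemma \ref{lem:markingrecursion} with $C=\CN(w)$ and $C'=\CN(w')$, and Theorem \ref{thm:drecursion} to the same data:
\[
\dwt(\xi(w))=
\begin{cases}
\dwt(\CN(w'),M(\CN(w'))) & \text{if }\ell(\CN(w))>1\text{ and }(i_2,j_2)\notin M(\CN(w)),\\
\dwt(\CN(w'),M(\CN(w')))+e_{i_1} & \text{otherwise},
\end{cases}
\qquad
\orr(w)=
\begin{cases}
\orr(w') & \text{if }\ell(\CN(w))>1\text{ and }(i_2,j_2)\notin M(\CN(w)),\\
\orr(w')+e_{i_1} & \text{otherwise}.
\end{cases}
\]
The branching conditions in the two displays are literally identical, and $\dwt(\CN(w'),M(\CN(w')))=\orr(w')$, so a term-by-term comparison gives $\dwt(\xi(w))=\orr(w)$, closing the induction.

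\textbf{Where the work is.} By the time we reach this statement all the genuinely hard steps are done: Theorem \ref{thm:drecursion} is the technical heart, and Lemma \ref{lem:markingrecursion} supplies the parallel recursion for markings. The only things to check here are bookkeeping: that deleting the first link of $\CN(w)$ really produces $\CN(w')$ (so the inductive hypothesis applies to the truncation, and in particular $M$ of the truncation equals $M(\CN(w'))$), and that the condition ``$C_2\notin M(C)$'' of Lemma \ref{lem:markingrecursion} for $C=\CN(w)$ is the same as the condition ``$(i_2,j_2)\notin M(\CN(w))$'' of Theorem \ref{thm:drecursion}. Both are immediate from Definition \ref{def:nestedchain}.
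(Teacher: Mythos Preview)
Your proof is correct and follows essentially the same approach as the paper: induction on $m=\ell(w_0)-\ell(w)$, with the inductive step reducing immediately to Lemma~\ref{lem:markingrecursion} and Theorem~\ref{thm:drecursion} applied to $w'=wt_{i_1j_1}$. You spell out slightly more of the bookkeeping (that truncating $\CN(w)$ yields $\CN(w')$, and that the case conditions match), but the argument is the paper's.
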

	\begin{proof}
		We work by induction on $m=\ell(w_0)-\ell(w)$. If $m=0$, then $w=w_0$. Then $\CN(w_0)$ is empty, and both $\orr(w_0)$ and $\dwt(\xi(w))$ are the zero vector.
		
		Assume the result holds for all $m'<m$. Suppose $\ell(w)=\ell(w_0)-m$. Let $\CN(w)_p=(i_p,j_p)$ for $p\in [\ell(\CN(w))]$ and $w'=wt_{i_1j_1}$, so $\xi(w')=(\CN(w'),M(\CN(w')))$. By induction, $\dwt(\xi(w')) = \orr(w')$. The theorem now follows immediately from Lemma \ref{lem:markingrecursion} and Theorem \ref{thm:drecursion}.
	\end{proof}
	
	\begin{corollary}
		\label{cor:weakequality}		
		For any $w\in S_n$, $\#M(\CN(w)) = \absoverrajcode{w}$.
	\end{corollary}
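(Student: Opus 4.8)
The plan is to deduce this immediately from Theorem \ref{thm:strongequality} by passing to the sum of coordinates. Set $\xi(w)=(\CN(w),M(\CN(w)))$ as in that theorem, so that $\dwt(\xi(w))=\orr(w)$. Taking absolute values (sums of entries) of both sides gives $|\dwt(\xi(w))|=|\orr(w)|=\absoverrajcode{w}$, the last equality being the definition of $\absoverrajcode{w}$.

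It remains to identify $|\dwt(\xi(w))|$ with $\#M(\CN(w))$. By definition, $\dwt(\xi(w))_k$ equals the number of links of the form $(k,\star)$ lying in the marking set $M(\CN(w))$. Every link $(i_p,j_p)$ of the chain satisfies $i_p\in[n]$, so summing over $k\in[n]$ counts each marked link exactly once; hence $\sum_{k=1}^n \dwt(\xi(w))_k=\#M(\CN(w))$. Combining, $\#M(\CN(w))=|\dwt(\xi(w))|=\absoverrajcode{w}$, as claimed.

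There is essentially no obstacle here: the content lies entirely in Theorem \ref{thm:strongequality} (and the recursion of Theorem \ref{thm:drecursion} behind it), and this corollary is only the bookkeeping observation that the weight vector $\dwt$ records markings row by row, so its total records all markings.
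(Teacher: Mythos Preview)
Your proof is correct and matches the paper's approach: the corollary is stated immediately after Theorem \ref{thm:strongequality} with no separate proof, so it is intended to follow exactly by summing the coordinates of the identity $\dwt(\xi(w))=\orr(w)$, as you do.
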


	\begin{corollary} 
		\label{cor:deg>=}
		The degree of $\mathfrak{G}_w$ is at least $\absrajcode{w}$.
	\end{corollary}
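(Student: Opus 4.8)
The plan is to read off a degree-$\absrajcode{w}$ monomial from the climbing chain formula of Theorem~\ref{thm:grothformula}, using Theorem~\ref{thm:strongequality}; the one point requiring care is ruling out cancellation among the top-degree terms.

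First I would record a bookkeeping identity for the signs in Theorem~\ref{thm:grothformula}. For a marked climbing chain $\xi=(C,U)$ of $w$, each link of $U$ has a well-defined first coordinate, so $\#U=\sum_{k=1}^n\dwt(\xi)_k=|\dwt(\xi)|$. Every climbing chain of $w$ has length $\ell(C)=\ell(w_0)-\ell(w)$, and $\sum_{k=1}^n(n-k)=\ell(w_0)$, so
\[
|\wt(\xi)|=\sum_{k=1}^n\bigl(n-k-\dwt(\xi)_k\bigr)=\ell(w_0)-\#U .
\]
Therefore $\mathrm{sign}(\xi)=(-1)^{\ell(C)-\#U}=(-1)^{\ell(w_0)-\ell(w)-\#U}=(-1)^{|\wt(\xi)|-\ell(w)}$; that is, the sign of a marked chain in the expansion of $\mathfrak{G}_w$ is determined by the degree of the monomial it contributes. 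Hence the degree-$d$ homogeneous component of $\mathfrak{G}_w$ equals $(-1)^{d-\ell(w)}$ times the sum of $\bm{x}^{\wt(\xi)}$ over all marked climbing chains $\xi$ of $w$ with $|\wt(\xi)|=d$. This is a sum of monomials all entering with the same sign, so no cancellation occurs, and the degree-$d$ component of $\mathfrak{G}_w$ is nonzero as soon as some marked climbing chain of $w$ has weight-sum $d$.

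Next I would exhibit such a chain in degree $\absrajcode{w}$. Take $\xi(w)=(\CN(w),M(\CN(w)))$. By Theorem~\ref{thm:strongequality}, $\dwt(\xi(w))=\orr(w)$, so $\wt(\xi(w))_k=n-k-\orr(w)_k=\rr(w)_k$ for every $k$, i.e.\ $\bm{x}^{\wt(\xi(w))}=\bm{x}^{\rr(w)}$, a monomial of degree $|\rr(w)|=\absrajcode{w}$. By the previous paragraph the degree-$\absrajcode{w}$ component of $\mathfrak{G}_w$ is then nonzero, so $\deg\mathfrak{G}_w\geq\absrajcode{w}$.

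The only genuine obstacle is the potential collapse of the top-degree part of $\mathfrak{G}_w$ in the alternating sum of Theorem~\ref{thm:grothformula}; the degree--parity computation above is precisely what rules this out, after which the conclusion is immediate from Theorem~\ref{thm:strongequality}. Equivalently, one may phrase the argument as: $\bm{x}^{\rr(w)}$ occurs in $\mathfrak{G}_w$ with coefficient equal, up to the sign $(-1)^{\absrajcode{w}-\ell(w)}$, to the number of marked climbing chains of $w$ of weight $\rr(w)$, and this number is positive because $\xi(w)$ is one of them.
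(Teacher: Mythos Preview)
Your proof is correct and follows the same approach as the paper: exhibit the nested chain $\xi(w)=(\CN(w),M(\CN(w)))$ and invoke Theorem~\ref{thm:strongequality} to see that it contributes the monomial $\bm{x}^{\rr(w)}$ of degree $\absrajcode{w}$. The paper's proof simply asserts that this monomial ``lies in the support of $\mathfrak{G}_w$'' and stops there; your sign--parity computation $\mathrm{sign}(\xi)=(-1)^{|\wt(\xi)|-\ell(w)}$ is a genuine and useful addition, since it is exactly what justifies that assertion by ruling out cancellation in the alternating sum of Theorem~\ref{thm:grothformula}.
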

	\begin{proof}
		By Theorem \ref{thm:strongequality}, the monomial $\bm{x}^{\rr(w)}$ has degree $\absrajcode{w}$ and lies in the support of $\mathfrak{G}_w$. Thus, $\deg\mathfrak{G}_w\geq \absrajcode{w}$. 
	\end{proof}

	\section{The degree of $\mathfrak{G}_w$ equals $\absrajcode{w}$}
	\label{sec:raj}

	We showed in Corollary \ref{cor:deg>=} that $\deg\mathfrak{G}_w\geq \absrajcode{w}$. In Corollary \ref{cor:raj} we show that $\deg\mathfrak{G}_w= \absrajcode{w}$. This yields an alternate proof of part of Theorem \ref{thm:pswleadingterm}.
	\medskip

	Let $C$ be a climbing chain of $w\in S_n$. Suppose $C$ consists of
	\[C: w=w^{(0)} \xrightarrow{C_1=(i_1,j_1)} w^{(1)} \xrightarrow{C_2=(i_2,j_2)} w^{(2)}\xrightarrow{C_3=(i_3,j_3)}\cdots\xrightarrow{C_m=(i_m,j_m)} w^{(m)} = w_0. \]
	Using Algorithm \ref{alg:1} below, we associate to $C$ two sequences of sets: $\Psi_0,\Psi_1,\ldots,\Psi_m$ and $\Omega_0,\Omega_1,\ldots,\Omega_m$.

	\begin{algorithm}
		\caption{}
		\label{alg:1}
		\begin{algorithmic}
			\State input $w\in S_n$
			\State input a climbing chain $C$ of $w$ with components
			\[C: w=w^{(0)} \xrightarrow{C_1=(i_1,j_1)} w^{(1)} \xrightarrow{C_2=(i_2,j_2)} w^{(2)}\xrightarrow{C_3=(i_3,j_3)}\cdots\xrightarrow{C_m=(i_m,j_m)} w^{(m)} = w_0\]
			
			\\
			\For{$q\in[n-1]$}
				\State compute the lexicographically last element $\alpha^{q}=(\alpha^q_1,\ldots,\alpha^q_{k_q})$ of $\LIS(w,q)$ 
			\EndFor
			\State initialize $\Psi_0 = \bigcup_{q=1}^{n-1}\left\{(q,\alpha^q_2),\ldots,(q,\alpha^q_{k_q}) \right\}$
			\State initialize $\Omega_0=\emptyset$
			\\

			\For{$k=1,2,\ldots,m$}
				\If{$C_k\in \Psi_{k-1}$} 
					\State set $\Psi_k=\Psi_{k-1}\setminus\{C_k\}$
					\State set $\Omega_k = \Omega_{k-1}\cup\{C_k\}$
				\ElsIf{$w^{(k)}(a')>w^{(k)}(b')$ for some $(a',b')\in \Psi_{k-1}$}
					\State set $\Psi_k=\{(t_{i_kj_k}(a), b) \mid (a,b)\in \Psi_{k-1} \}$
					\State set $\Omega_k=\Omega_{k-1}$
				\Else
					\State set $\Psi_k=\Psi_{k-1}$
					\State set $\Omega_k = \Omega_{k-1}$
				\EndIf
			\EndFor\\
			\Return $\Psi_0,\ldots,\Psi_m$ and $\Omega_0,\ldots,\Omega_m$
		\end{algorithmic}
	\end{algorithm}
\pagebreak
	\begin{definition}
		Observe that Algorithm \ref{alg:1} builds $\Omega_k$ and $\Psi_k$ by performing exactly one of three available transformations on $\Omega_{k-1}$ and $\Psi_{k-1}$: the ``if'', ``else if'', and ``else'' blocks. We will (respectively) name these three operations \emph{transfer}, \emph{adjust}, and \emph{pass}.
	\end{definition}

	\begin{example}
		\label{exp:265143algorithmrun}
		Let $w=265143$. The Rothe diagram of $w$ is
		\begin{center}
			\begin{tikzpicture}[scale=.55]
				\draw (0,0)--(6,0)--(6,6)--(0,6)--(0,0);
				\draw[draw=red] (6,5.5) -- (1.5,5.5) node[red] {$\bullet$} -- (1.5,0);
				\draw[draw=red] (6,4.5) -- (5.5,4.5) node[red] {$\bullet$} -- (5.5,0);
				\draw[draw=red] (6,3.5) -- (4.5,3.5) node[red] {$\bullet$} -- (4.5,0);
				\draw[draw=red] (6,2.5) -- (0.5,2.5) node[red] {$\bullet$} -- (0.5,0);
				\draw[draw=red] (6,1.5) -- (3.5,1.5) node[red] {$\bullet$} -- (3.5,0);
				\draw[draw=red] (6,0.5) -- (2.5,0.5) node[red] {$\bullet$} -- (2.5,0);
				
				\filldraw[draw=black,fill=lightgray] (0,5)--(1,5)--(1,6)--(0,6)--(0,5);
				\filldraw[draw=black,fill=lightgray] (0,4)--(1,4)--(1,5)--(0,5)--(0,4);
				\filldraw[draw=black,fill=lightgray] (0,3)--(1,3)--(1,4)--(0,4)--(0,3);
				\filldraw[draw=black,fill=lightgray] (2,1)--(3,1)--(3,2)--(2,2)--(2,1);
				\filldraw[draw=black,fill=lightgray] (2,4)--(3,4)--(3,5)--(2,5)--(2,4);
				\filldraw[draw=black,fill=lightgray] (3,4)--(4,4)--(4,5)--(3,5)--(3,4);
				\filldraw[draw=black,fill=lightgray] (4,4)--(5,4)--(5,5)--(4,5)--(4,4);
				\filldraw[draw=black,fill=lightgray] (2,3)--(3,3)--(3,4)--(2,4)--(2,3);
				\filldraw[draw=black,fill=lightgray] (3,3)--(4,3)--(4,4)--(3,4)--(3,3);
				
				\draw (0,1)--(6,1);
				\draw (0,2)--(6,2);
				\draw (0,3)--(6,3);
				\draw (0,4)--(6,4);
				\draw (0,5)--(6,5);
				
				\draw (1,0)--(1,6);
				\draw (2,0)--(2,6);
				\draw (3,0)--(3,6);
				\draw (4,0)--(4,6);
				\draw (5,0)--(5,6);
				
				\node at (-1.5,2.5) {$D(w)=$};
				\node at (6.3,2.5) {.};
			\end{tikzpicture}
		\end{center}
		Consider the chain
		\begin{align*}
			C: 265143 {\xrightarrow{(1,3)}} 562143
					 {\xrightarrow{(1,2)}} 652143
					 {\xrightarrow{(3,6)}} 653142
					 {\xrightarrow{(3,5)}} 654132
					 {\xrightarrow{(4,5)}} 654312 
					 {\xrightarrow{(5,6)}} 654321.
		\end{align*}
		Running Algorithm \ref{alg:1}, we start with $\Omega_0=\emptyset$. For each $q\in[5]$, the sets $\LIS(w,q)$ are
		\begin{align*}
			\LIS(w,1)&=\{(1, 2), (1, 3), (1, 5), (1, 6)\}\\
			\LIS(w,2)&=\{(2)\}\\
			\LIS(w,3)&=\{(3)\}\\
			\LIS(w,4)&=\{(4,5),(4,6)\}\\
			\LIS(w,5)&=\{(5)\}
		\end{align*}
		The lexicographically last elements of $\LIS(w,q)$ for $q=1,2,3,4,5$ are
		\[(1,6),(2),(3),(4,6),(5), \quad\mbox{so}\quad \Psi_0=\{(1,6),(4,6) \}. \]
		The algorithm terminates after the six steps shown in Figure \ref{fig:265143algorithmrun}.
	\end{example}

	\begin{figure}[ht]
		\begin{center}
			\includegraphics[scale=1.25]{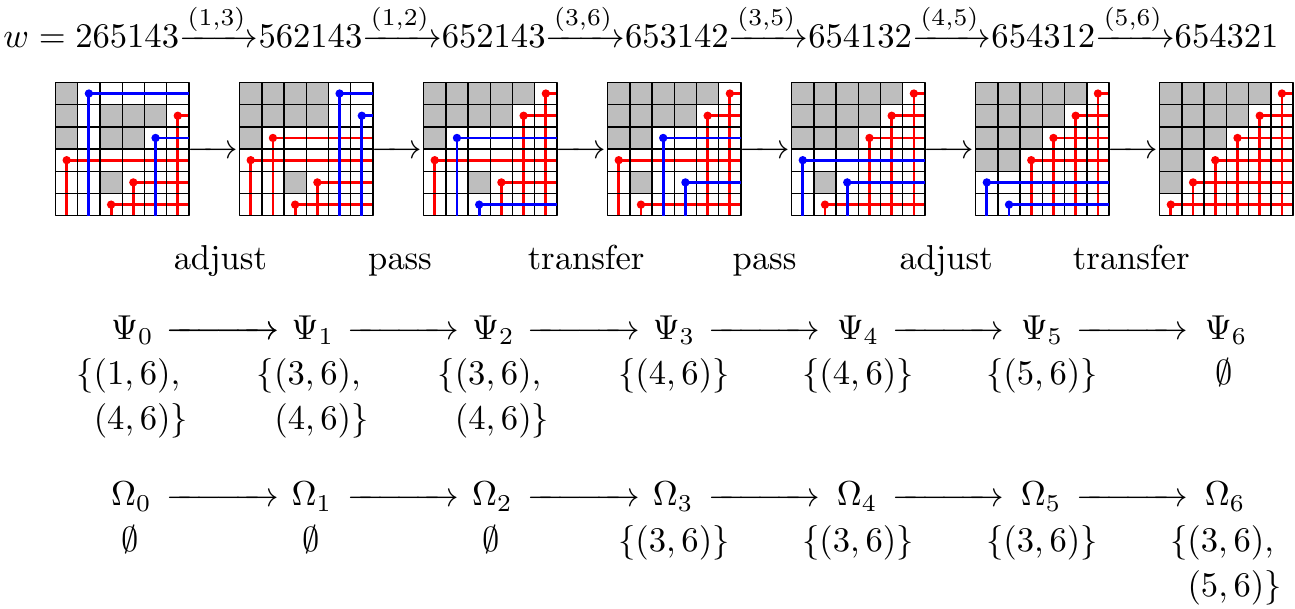}
		\end{center}
		\caption{Evaluation of Algorithm \ref{alg:1} on a climbing chain of $w=265143$.}
		\label{fig:265143algorithmrun}
	\end{figure}
	
	The following lemma describes a key property of the sets $\Psi_k$ from Algorithm \ref{alg:1}.

	\begin{lemma}
		\label{lem:algkeyproperty}
		Let $w\in S_n$ and $C$ be a climbing chain of $w$. Assume the notation of Algorithm \ref{alg:1}. Fix $k$ with $0\leq k\leq \ell(C)$. For each $q\in [n-1]$, let the elements of $\Psi_k$ of the form $(q,\star)$ be $(q,\beta^q_1),\ldots,(q,\beta^q_{l_q})$, labeled so $\beta^q_1<\cdots<\beta^q_{l_q}$. Then
		\[q<\beta^q_1, \quad\mbox{and}\quad w^{(k)}(q) < w^{(k)}(\beta^q_1) < \cdots < w^{(k)}(\beta^q_{l_q}).\]
	\end{lemma}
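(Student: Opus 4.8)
The plan is to prove this by induction on $k$, following the recursive structure of Algorithm \ref{alg:1}, exactly mirroring the three cases (transfer, adjust, pass) in the algorithm's loop. The base case $k=0$ is where $\Psi_0$ is initialized from the lexicographically last elements of $\LIS(w,q)$; here the two claimed inequalities are immediate, since by Definition \ref{def:1} an increasing subsequence $\alpha^q=(q,\alpha^q_2,\ldots,\alpha^q_{k_q})$ starting from position $q$ satisfies $q<\alpha^q_2<\cdots<\alpha^q_{k_q}$ and $w(q)<w(\alpha^q_2)<\cdots<w(\alpha^q_{k_q})$ by definition, and the elements of $\Psi_0$ of the form $(q,\star)$ are precisely $(q,\alpha^q_2),\ldots,(q,\alpha^q_{k_q})$.

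For the inductive step, fix $k\ge 1$ and assume the statement for $k-1$. First I would handle the \emph{pass} case: here $\Psi_k=\Psi_{k-1}$, but $w^{(k)}$ differs from $w^{(k-1)}$ by the transposition $t_{i_kj_k}$ swapping positions $i_k$ and $j_k$. The key point is that the ``else'' branch is taken precisely when $C_k\notin\Psi_{k-1}$ and $w^{(k)}(a')\le w^{(k)}(b')$ (equivalently $<$, since values are distinct) for every $(a',b')\in\Psi_{k-1}$ — so the required value-inequalities $w^{(k)}(q)<w^{(k)}(\beta^q_1)<\cdots$ among consecutive pairs in $\Psi_k$ can be checked pairwise using the guard condition, once one knows the position-ordering $q<\beta^q_1$ is inherited unchanged. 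For the \emph{adjust} case, $\Psi_k$ replaces each $(a,b)\in\Psi_{k-1}$ by $(t_{i_kj_k}(a),b)$; I would verify that this relabeling of first coordinates is consistent — that the pairs grouped by their (possibly swapped) first coordinate still form chains — using that $C$ is a climbing chain so $i_1\le i_2\le\cdots$, hence $i_k\le a$ for every $(a,b)\in\Psi_{k-1}$ by the inductive position-inequality $a>q\ge\ldots$; this controls how $t_{i_kj_k}$ acts on the first coordinates. Finally the \emph{transfer} case simply deletes $C_k$ from $\Psi_{k-1}$, and deleting an element from a chain leaves a chain, so both inequalities are inherited directly.

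The main obstacle I expect is the bookkeeping in the \emph{adjust} case: one must argue that applying $t_{i_kj_k}$ to the first coordinate of each pair in $\Psi_{k-1}$ does not disturb the chain structure within each fixed first-coordinate class, and in particular that after relabeling the new first coordinates $q'$ still satisfy $q'<\beta^{q'}_1$ and the value chain $w^{(k)}(q')<w^{(k)}(\beta^{q'}_1)<\cdots$. This requires understanding exactly which positions $i_k,j_k$ are involved relative to the pairs currently tracked, and leaning on the cover-relation description (Lemma \ref{lem:cover}) together with the monotonicity $i_1\le\cdots\le i_m$ of climbing chains to pin down that $i_k$ can only ever equal or precede the first coordinates appearing in $\Psi_{k-1}$. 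The value inequalities for the newly adjacent pairs should then follow because $w^{(k)}=w^{(k-1)}t_{i_kj_k}$ only swaps two values and the adjust branch is entered exactly when that swap creates a descent among tracked pairs — after relabeling, that descent is resolved. Everything else is routine propagation of the inductive hypothesis through set deletion or pointwise relabeling.
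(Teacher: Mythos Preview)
Your approach is exactly the paper's: induction on $k$, handling the three branches of Algorithm~\ref{alg:1} separately, with the adjust case carrying the real content. The base case and transfer case are fine.

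There is one concrete gap. You assert that $i_k\le a$ for every $(a,b)\in\Psi_{k-1}$, which is the right fact to isolate, but your justification ``by the inductive position-inequality $a>q\ge\ldots$'' is not an argument: the inductive hypothesis only gives $a<b$ for pairs $(a,b)\in\Psi_{k-1}$, not any lower bound on $a$. The correct reason is that for $q<i_k$ one has $w^{(k-1)}(q)=n+1-q$ (since the climbing chain has already finished those rows), so $w^{(k-1)}(q)>w^{(k-1)}(b)$ for every $b>q$, which by the inductive \emph{value}-inequality forces no pair in $\Psi_{k-1}$ to have first coordinate below $i_k$. Once you have $a\ge i_k$, the paper's key observation in the adjust case follows: any $(a',b')\in\Psi_{k-1}$ with $w^{(k)}(a')>w^{(k)}(b')$ must have $a'=i_k$ and $b'>j_k$ (the other possibilities are eliminated using $a'\ge i_k$ together with the cover condition from Lemma~\ref{lem:cover}).

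A smaller point: in the pass case, the guard condition alone gives $w^{(k)}(q)<w^{(k)}(\beta^q_p)$ for each $p$ but not $w^{(k)}(\beta^q_p)<w^{(k)}(\beta^q_{p+1})$; you still need the cover relation to handle the situation where some $\beta^q_p=j_k$ with $q\neq i_k$. The paper glosses over this too (``immediate''), but since you single out the guard as the mechanism, be aware it does not do all the work by itself.
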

	\begin{proof}
		We work by finite induction on $k$. When $k=0$, the lemma follows from the definition of $\Psi_0$. Suppose the lemma holds for $k-1$. 
	
		Suppose first that $\Psi_k$ is obtained from $\Psi_{k-1}$ by a transfer operation. Then $\Psi_k=\Psi_{k-1}\setminus \{(i_k,j_k)\}$. Since $(i_k,j_k)$ is a link in $C$, this means it must be the lexicographically first element of $\Psi_{k-1}$ (by the induction assumption). The statement of the lemma follows easily in this case.
		
		Next, suppose that $\Psi_k$ is obtained from $\Psi_{k-1}$ by an adjust operation (See Figure \ref{fig:algbargraphsswap} for a visual representation of this case.). It follows $(i_k,j_k)\notin \Psi_{k-1}$, $w^{(k)}(a')>w^{(k)}(b')$ for some $(a',b')\in \Psi_{k-1}$, and \[\Psi_k = \{(t_{i_kj_k}(a),b)\mid (a,b)\in \Psi_{k-1} \}. \] Observe that all such $(a',b')$ must satisfy $a'=i_k$, $b'>j_k$, and $w^{(k-1)}(i_k)<w^{(k-1)}(b')<w^{(k-1)}(j_k)$. The adjust operation then guarantees the conditions of the lemma are met.
		
		Lastly, consider the case of a pass operation. This means that $(i_k,j_k)\notin \Psi_{k-1}$ and $w^{(k)}(a)<w^{(k)}(b)$ for all $(a,b)\in \Psi_{k-1}$. The conditions of the lemma are immediate.
	\end{proof}

	For later use, we separately record an observation made in the ``adjust'' case of the proof of Lemma \ref{lem:algkeyproperty}.

	\begin{lemma}
		\label{lem:adjustcase}
		Let $w\in S_n$ and $C$ be a climbing chain of $w$. Assume the notation of Algorithm \ref{alg:1}. Suppose step $k$ in the execution of Algorithm \ref{alg:1} is an adjust operation caused by $(a',b')\in \Psi_{k-1}$. Then $a'=i_k$ and $b'>j_k$.
	\end{lemma}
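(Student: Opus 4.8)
The plan is to lift the statement directly out of the ``adjust'' analysis in the proof of Lemma \ref{lem:algkeyproperty}, filling in the case check. Write $C_k=(i_k,j_k)$ and $w^{(k)}=w^{(k-1)}t_{i_kj_k}$, and recall that an adjust step means $(i_k,j_k)\notin\Psi_{k-1}$ while $w^{(k)}(a')>w^{(k)}(b')$ for the pair $(a',b')\in\Psi_{k-1}$ causing it. First I would apply Lemma \ref{lem:algkeyproperty} at index $k-1$ to get $a'<b'$ and $w^{(k-1)}(a')<w^{(k-1)}(b')$. Since $w^{(k)}$ and $w^{(k-1)}$ differ only in positions $i_k$ and $j_k$, the reversal of this inequality forces $\{a',b'\}\cap\{i_k,j_k\}\neq\emptyset$; and $\{a',b'\}$ cannot equal $\{i_k,j_k\}$, since that would make $(a',b')=(i_k,j_k)\notin\Psi_{k-1}$. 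Hence exactly one of $a',b'$ lies in $\{i_k,j_k\}$, giving four sub-cases.

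Next I would dispose of three sub-cases using only the cover inequality $w^{(k-1)}(i_k)<w^{(k-1)}(j_k)$ from Lemma \ref{lem:cover}. If $a'=j_k$, then $w^{(k)}(a')=w^{(k-1)}(i_k)$ and $w^{(k)}(b')=w^{(k-1)}(b')$, so $w^{(k)}(a')>w^{(k)}(b')$ and $w^{(k-1)}(j_k)=w^{(k-1)}(a')<w^{(k-1)}(b')$ squeeze $w^{(k-1)}(b')$ strictly between $w^{(k-1)}(j_k)$ and $w^{(k-1)}(i_k)$, contradicting the cover inequality. The case $b'=i_k$ is handled the same way. This leaves either $a'=i_k$ (the desired case) or $b'=j_k$ with $a'\notin\{i_k,j_k\}$, which must be excluded.

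For the case $b'=j_k$: here $w^{(k)}(b')=w^{(k-1)}(i_k)$ and $w^{(k)}(a')=w^{(k-1)}(a')$, so $w^{(k)}(a')>w^{(k)}(b')$ together with $w^{(k-1)}(a')<w^{(k-1)}(b')=w^{(k-1)}(j_k)$ yields $w^{(k-1)}(i_k)<w^{(k-1)}(a')<w^{(k-1)}(j_k)$, while Lemma \ref{lem:algkeyproperty} also gives $a'<j_k$. If $i_k<a'$, then $a'$ is a position strictly between $i_k$ and $j_k$ with $w^{(k-1)}$-value strictly between $w^{(k-1)}(i_k)$ and $w^{(k-1)}(j_k)$, contradicting Lemma \ref{lem:cover}. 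The residual possibility $a'<i_k$ is the crux: I would observe that because a climbing chain has nondecreasing first coordinates and $i_p<j_p$ for all $p$, no link $C_p$ with $p\geq k$ touches any position $d<i_k$, so $w^{(k-1)}(d)=w_0(d)=n+1-d$ for every $d<i_k$; thus positions below $i_k$ carry exactly the values $\{n,n-1,\ldots,n+2-i_k\}$, forcing $w^{(k-1)}(j_k)\leq n+1-i_k<n+2-i_k\leq n+1-a'=w^{(k-1)}(a')$, contradicting $w^{(k-1)}(a')<w^{(k-1)}(j_k)$. Therefore $a'=i_k$; then $b'\neq j_k$ (else $(a',b')=(i_k,j_k)\notin\Psi_{k-1}$), and $w^{(k-1)}(i_k)<w^{(k-1)}(b')<w^{(k-1)}(j_k)$ with $a'<b'$ and Lemma \ref{lem:cover} give $b'>j_k$.

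The main obstacle is the sub-case $a'<i_k$ above; what makes it go through is the ``freezing'' of early positions in a climbing chain, namely that any position strictly below the current first coordinate $i_k$ already holds its $w_0$-value and hence one of the top $i_k-1$ values, which cannot sit below $w^{(k-1)}(j_k)$. Every other step is routine bookkeeping with the single-transposition difference between $w^{(k-1)}$ and $w^{(k)}$ and the two defining conditions of a Bruhat cover.
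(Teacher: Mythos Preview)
Your proof is correct and follows the same approach as the paper: the paper simply records this lemma as an observation extracted from the ``adjust'' case in the inductive proof of Lemma~\ref{lem:algkeyproperty}, without spelling out the case analysis, and you have carefully supplied exactly those details (including the freezing argument that positions $d<i_k$ already carry their $w_0$-values, which rules out the $a'<i_k$ sub-case).
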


	\begin{figure}[ht]
		\begin{center}
			\includegraphics[scale=1]{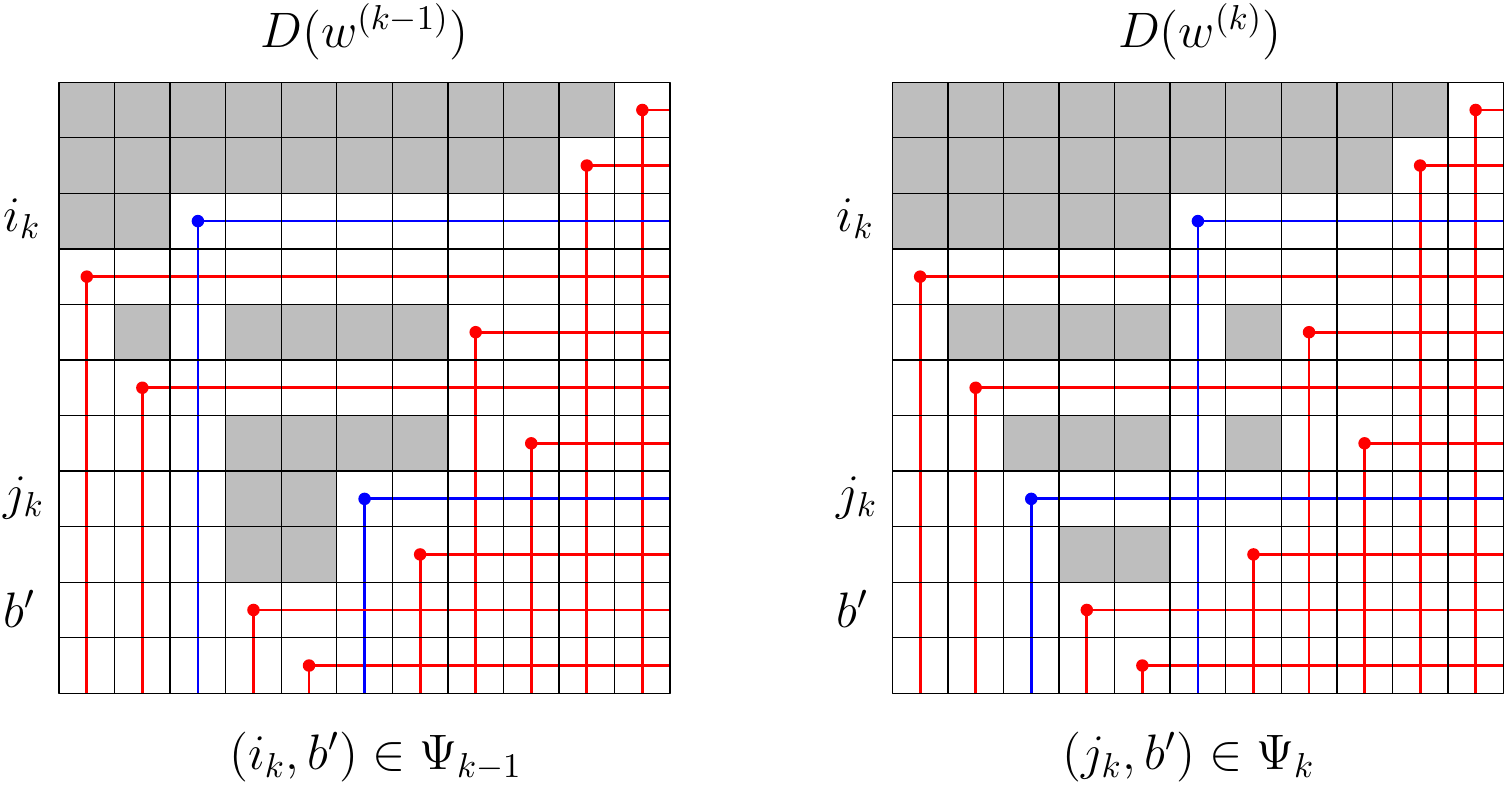}
		\end{center}
		\caption{Visual representations of the ``adjust'' case in Lemma \ref{lem:algkeyproperty}}.
		\label{fig:algbargraphsswap}
	\end{figure}

	The following lemma records basic properties of Algorithm \ref{alg:1}.
	\begin{lemma}
		\label{lem:algbasicproperties}
		Let $w\in S_n$ and $C$ be a climbing chain of $w$. Assume the notation of Algorithm \ref{alg:1}. Then 
		\begin{enumerate}[label=\textup{(\roman*)}]
			\item $\emptyset=\Omega_0\subseteq \cdots\subseteq \Omega_m\subseteq \{C_1,\ldots,C_m\}$.
			\item For each $0\leq k\leq m$, one has $\Omega_k\subseteq \{C_1,\ldots,C_k \}$.
			\item If $(i_p,j_p)\in \Omega_k$ for some $k$, then $\Omega_p=\Omega_{p-1}\cup \{(i_p,j_p)\}$ and $\Psi_p=\Psi_{p-1}\setminus \{(i_p,j_p)\}$.
			\item For each $0\leq k\leq m$, one has $\#\Omega_k + \#\Psi_k = \absoverrajcode{w}.$
			\item $\Psi_m=\emptyset$ and $\#\Omega_m=\absoverrajcode{w}$.
		\end{enumerate}
	\end{lemma}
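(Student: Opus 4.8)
\textbf{Proof plan for Lemma \ref{lem:algbasicproperties}.}

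The plan is to prove the five items essentially in order, since each later one leans on the earlier ones, and to run the arguments by finite induction on the step counter $k$ wherever a loop-invariant is being tracked. For (i) and (ii): the only place $\Omega$ changes is the ``transfer'' branch, where we pass from $\Omega_{k-1}$ to $\Omega_{k-1}\cup\{C_k\}$; the other two branches leave $\Omega$ fixed. So $\Omega_0\subseteq\Omega_1\subseteq\cdots\subseteq\Omega_m$ is immediate, and since each added element is of the form $C_k$ with $k$ the current step, a trivial induction gives $\Omega_k\subseteq\{C_1,\ldots,C_k\}$, which at $k=m$ yields $\Omega_m\subseteq\{C_1,\ldots,C_m\}$. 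Item (iii) is just an unpacking of the ``transfer'' branch together with Lemma \ref{lem:algkeyproperty}: if $C_p=(i_p,j_p)$ ever enters $\Omega$, it can only do so at step $p$ (a link can be transferred only at the step that performs it, because the conditions $C_k\in\Psi_{k-1}$ with $C_k=(i_k,j_k)$ force $k=p$ when the link in question is $C_p$); and the ``transfer'' branch at step $p$ sets $\Omega_p=\Omega_{p-1}\cup\{(i_p,j_p)\}$ and $\Psi_p=\Psi_{p-1}\setminus\{(i_p,j_p)\}$ by definition.

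For (iv), which is the crux, I would show $\#\Omega_k+\#\Psi_k$ is a loop invariant by checking each of the three branches. In a ``pass'' step nothing changes, so the count is preserved. In a ``transfer'' step, $\#\Omega$ goes up by one and $\#\Psi$ goes down by one (using that $C_k$ really is removed from $\Psi_{k-1}$, so no accidental cancellation), again preserving the sum. In an ``adjust'' step, $\Omega$ is untouched and $\Psi_k=\{(t_{i_kj_k}(a),b):(a,b)\in\Psi_{k-1}\}$ is the image of $\Psi_{k-1}$ under an injection, hence has the same cardinality; so the sum is preserved here too. It then remains to evaluate the invariant at $k=0$: $\Omega_0=\emptyset$, and by construction $\Psi_0=\bigcup_{q=1}^{n-1}\{(q,\alpha^q_2),\ldots,(q,\alpha^q_{k_q})\}$ is a disjoint union (disjoint across $q$ since the first coordinates differ) of sets of sizes $k_q-1=\orr(w)_q$, whence $\#\Psi_0=\sum_q \orr(w)_q=\absoverrajcode{w}$. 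This gives $\#\Omega_k+\#\Psi_k=\absoverrajcode{w}$ for all $k$.

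Finally, (v) follows from (iv) once we know $\Psi_m=\emptyset$: then $\#\Omega_m=\absoverrajcode{w}$. To see $\Psi_m=\emptyset$, note $w^{(m)}=w_0$, which is strictly decreasing, so there is no pair $(a,b)$ with $a<b$ and $w_0(a)<w_0(b)$; but by Lemma \ref{lem:algkeyproperty} every element $(q,\star)$ of $\Psi_m$ satisfies $q<\star$ and $w^{(m)}(q)<w^{(m)}(\star)$, which is impossible unless $\Psi_m$ has no such elements, i.e.\ $\Psi_m=\emptyset$. I expect the main obstacle to be the bookkeeping in (iv): one must be careful that in the ``transfer'' branch $C_k$ is genuinely an element of $\Psi_{k-1}$ (so the cardinality drops by exactly one) and that in the ``adjust'' branch $(a,b)\mapsto(t_{i_kj_k}(a),b)$ is injective on $\Psi_{k-1}$ — which follows because, by Lemma \ref{lem:adjustcase}, every pair in $\Psi_{k-1}$ that triggers the adjust has first coordinate $i_k$, and $t_{i_kj_k}$ is a bijection, so the map is injective on all of $\Psi_{k-1}$ regardless.
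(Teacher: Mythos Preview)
Your proof is correct and follows essentially the same approach as the paper's (very terse) proof: both argue (i)--(ii) from the fact that only the transfer branch modifies $\Omega$ by adjoining $C_k$, deduce (iii) from the transfer branch's definition, treat (iv) as a loop invariant checked branch-by-branch, and get (v) from Lemma~\ref{lem:algkeyproperty} applied at $w^{(m)}=w_0$ together with (iv). Your write-up simply fills in the details the paper leaves implicit; the reference to Lemma~\ref{lem:algkeyproperty} in your discussion of (iii) is unnecessary (distinctness of the links suffices), but harmless.
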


	\begin{proof}
		We first address (i) and (ii). Initially, $\Omega_0=\emptyset$. Only the transfer operation causes $\Omega_k\neq \Omega_{k-1}$, specifically by adding the element $C_k=(i_k,j_k)$ to $\Omega_{k-1}$. Thus (i) and (ii) hold. For (iii), note that $(i_p,j_p)\in \Omega_k$ implies step $p$ of Algorithm \ref{alg:1} was a transfer operation. Claim (iv) holds by an argument analogous to that of (ii). The claim that $\Psi_m=\emptyset$ in (v) follows from Lemma \ref{lem:algkeyproperty}. That $\#\Omega_m=\absoverrajcode{w}$ then follows from (iv).
	\end{proof}

	\begin{definition}
		Let $C$ be a climbing chain of $w\in S_n$. Suppose the minimal markings $M(C)$ are 
		\[\{C_{k_1},C_{k_2},\ldots,C_{k_p}\}\quad \mbox{with}\quad 1\leq k_1<k_2<\cdots<k_p\leq \ell(C).\]
		Define the \emph{runs} of $C$ to be the (disjoint) subsequences of $C$ consisting of links $k_q$ through $k_{q+1}-1$ for each $q\in [p]$ (taking $k_{p+1}=\ell(C)+1$).
	\end{definition}
	
	\begin{example}
		\label{exp:265143links}
		Continuing Example \ref{exp:265143algorithmrun}, let $w=265143$ and $C=((1,3),(1,2),(3,6),(3,5),(4,5),(5,6))$. Indicating the minimal markings $M(C)$ by overlines and separating runs by ``$\mid$'', the runs of $C$ are
 		\[\overline{(1,3)},(1,2) \,\mid\, \overline{(3,6)},(3,5)\,\mid\, \overline{(4,5)} \,\mid\, \overline{(5,6)}.\qedhere\]
	\end{example}
	
	\begin{lemma}
		\label{lem:atmostoneperrun}
		Let $w\in S_n$ and $C$ be a climbing chain of $w$ with length $m$. Construct the sets $\Omega_0,\ldots,\Omega_m$ and $\Psi_0,\ldots,\Psi_m$ using Algorithm \ref{alg:1}. Then each run of $C$ contains at most one element of $\Omega_m$.
	\end{lemma}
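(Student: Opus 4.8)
The plan is to track what each run of $C$ contributes to $\Omega_m$ and show that, once a link of a given run has been absorbed into $\Omega$ by a transfer operation, no later link of that same run can be absorbed. First I would set up the combinatorics of a single run. A run begins at a minimally marked link $C_{k_q}=(i_{k_q},j_{k_q})$ and consists of consecutive links $C_{k_q},C_{k_q+1},\ldots,C_{k_{q+1}-1}$. By the definition of $M(C)$, every link after the first in a run has the same left coordinate as its predecessor and a strictly smaller right coordinate; hence all links in a run share a common row index $i=i_{k_q}$ and their right coordinates strictly decrease: $j_{k_q}>j_{k_q+1}>\cdots>j_{k_{q+1}-1}$. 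Graphically, the run repeatedly moves the dot in row $i$ leftward past dots that sit to its left, while all rows below are untouched until the run ends.

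Next I would argue by contradiction: suppose two links $C_a$ and $C_b$ of the same run, with $a<b$, both lie in $\Omega_m$. By Lemma~\ref{lem:algbasicproperties}(iii), step $a$ is a transfer operation, so $C_a=(i,j_a)\in\Psi_{a-1}$ and $\Psi_a=\Psi_{a-1}\setminus\{(i,j_a)\}$; similarly $C_b=(i,j_b)\in\Psi_{b-1}$ with $j_b<j_a$. The key point is to understand how an element of the form $(i,j_a)$ could re-enter $\Psi$ between steps $a$ and $b$, or more precisely how an element $(i,j_b)$ with $j_b<j_a$ ends up in $\Psi_{b-1}$. The only operation that changes left coordinates of elements of $\Psi$ is an adjust, which replaces $(a',b')$ by $(t_{i_kj_k}(a'),b')$; by Lemma~\ref{lem:adjustcase} an adjust at step $k$ is caused by some $(i_k,b')\in\Psi_{k-1}$ with $b'>j_k$, so it acts on elements in row $i_k$. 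During the run, the active links all have left coordinate $i$, so any adjusts occurring inside the run only move elements whose left coordinate is $i$ — and by Lemma~\ref{lem:algkeyproperty}, after the transfer at step $a$ removed the lexicographically first row-$i$ element of $\Psi_{a-1}$, the remaining row-$i$ elements of $\Psi_a$ all have right coordinate exceeding $j_a>j_b$.

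So the crux is: after step $a$, the set $\Psi$ contains no element $(i,j)$ with $j\le j_a$, and I must show this persists until step $b$. I would verify this by a short induction on the steps strictly between $a$ and $b$, using the structure of the run: each such step has its link $C_k=(i,j_k)$ with $j_k<j_a$, and since (by induction) $(i,j_k)\notin\Psi_{k-1}$, step $k$ cannot be a transfer of $C_k$; it is an adjust or a pass, neither of which introduces a new row-$i$ element with small right coordinate (an adjust only permutes existing left coordinates among the finitely many rows, and by Lemma~\ref{lem:algkeyproperty} the resulting row-$i$ elements still have right coordinates larger than those present before, which were all $>j_a$). Hence $(i,j_b)\notin\Psi_{b-1}$, contradicting that step $b$ is a transfer. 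I expect the main obstacle to be bookkeeping the adjust operations carefully: one must be sure that an adjust occurring during the run (caused by a row-$i$ element) does not accidentally lower a right coordinate into the forbidden range, and that adjusts caused at rows other than $i$ simply cannot touch row-$i$ elements at all; both follow from Lemmas~\ref{lem:algkeyproperty} and~\ref{lem:adjustcase}, but making the invariant precise and checking it survives every case of the step is where the care is needed.
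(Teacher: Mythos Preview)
Your approach is essentially the same as the paper's: both arguments reduce to showing that once a transfer occurs at some link $C_a=(i,j_a)$ in a run, no later link $C_b=(i,j_b)$ with $j_b<j_a$ can lie in $\Psi_{b-1}$, and both hinge on the observation (via Lemma~\ref{lem:algkeyproperty}) that after an adjust at step $k$ the row-$i$ elements of $\Psi_k$ are the former row-$j_k$ elements and hence have right coordinate exceeding $j_k$. The paper organizes this as a case split (consecutive links; non-consecutive without adjusts; non-consecutive with adjusts, taking $b-a$ minimal to rule out intermediate transfers), whereas you package it as a single inductive invariant, which is a little cleaner and avoids the minimality reduction.

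One caveat: your stated invariant, that after step $a$ no row-$i$ element of $\Psi$ has right coordinate $\le j_a$, is too strong to survive an adjust. An adjust at step $k$ replaces the row-$i$ elements by the former row-$j_k$ elements of $\Psi_{k-1}$, and Lemma~\ref{lem:algkeyproperty} only guarantees those have right coordinate $>j_k$, not $>j_a$. The invariant you actually want is the step-dependent one: after step $k$ (for $a\le k<b$), every row-$i$ element of $\Psi_k$ has right coordinate $>j_k$. This is what your parenthetical is gesturing at; it is preserved by both pass and adjust, and since $j_k>j_{k+1}>\cdots>j_b$ it still forces $(i,j_b)\notin\Psi_{b-1}$. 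With that correction your argument goes through, and you already flagged precisely this spot as the place needing care.
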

	\begin{proof}
		Consider a run $C_{p},C_{p+1},\ldots,C_{q}$ of $C$ containing at least two elements of $\Omega_m$. The definition of run forces $\{C_p,C_{p+1},\ldots,C_q\}\cap M(C) = \{C_p\}$. It follows that $i_p=i_{p+1}=\cdots=i_{q}$ and $j_p>j_{p+1}>\cdots>j_q>i_p$. 
		
		First, suppose that $C_a,C_{a+1}\in \Omega_m$ for some $p\leq a< q$. By Lemma \ref{lem:algbasicproperties}(iii), $C_a,C_{a+1}\in \Psi_{a-1}$. But $i_a=i_{a+1}$ and $j_a>j_{a+1}$, so Lemma \ref{lem:algkeyproperty} implies
		\[w^{(a-1)}(i_a) < w^{(a-1)}(j_{a+1}) < w^{(a-1)}(j_a). \]
		This contradicts the climbing chain condition $w^{(a-1)}\lessdot w^{(a)} = w^{(a-1)}t_{i_aj_a}$ (see Lemma \ref{lem:cover}).

		Now suppose $C_a,C_b\in \Omega_m$ for some $a,b$ with $p\leq a<b-1$, $b\leq q$ and $b-a$ minimal. Additionally, suppose there are no adjust operations between steps $a$ and $b$ in the execution of Algorithm \ref{alg:1}. This implies $\Psi_a=\Psi_{a+1}=\cdots=\Psi_{b-1}$. Thus $C_a,C_b\in \Psi_{a-1}$, again contradicting the Bruhat cover condition on climbing chains.
		
		Lastly, suppose there was an adjust operation between steps $a$ and $b$ in the execution of Algorithm \ref{alg:1}. Say the first such adjust operation occurs at step $k$. Recall that $i_a=i_{a+1}=\cdots=i_k=\cdots=i_b$. Suppose first that there are no elements $(i_k,q)\in \Psi_{k}$. Then by Lemma \ref{lem:adjustcase}, no further adjust operations occur prior to step $b$. Thus there are no elements $(i_k,q)\in \Psi_{b-1}$. By Lemma \ref{lem:algbasicproperties}, this contradicts that $C_b\in \Omega_m$.
		
		Hence we may assume that there is an element of the form $(i_k,q)\in \Psi_k$. Then $(j_k,q)\in \Psi_{k-1}$, so Lemma \ref{lem:algkeyproperty} implies $q>j_k$. Hence $q>j_k>j_{k+1}>\cdots>j_b$, so $(i_k,q)\notin \{C_k,C_{k+1},\ldots,C_{b}\}$. 
		
		If there is only a single adjust operation between steps $a$ and $b$, this contradicts that $C_b\in \Omega_m$. If there is more than one adjust operation between steps $a$ and $b$, the same contradiction is reached by repeating the previous argument for each adjust operation. 
	\end{proof}

	\begin{figure}[ht]
		\begin{center}
			\includegraphics[scale=1]{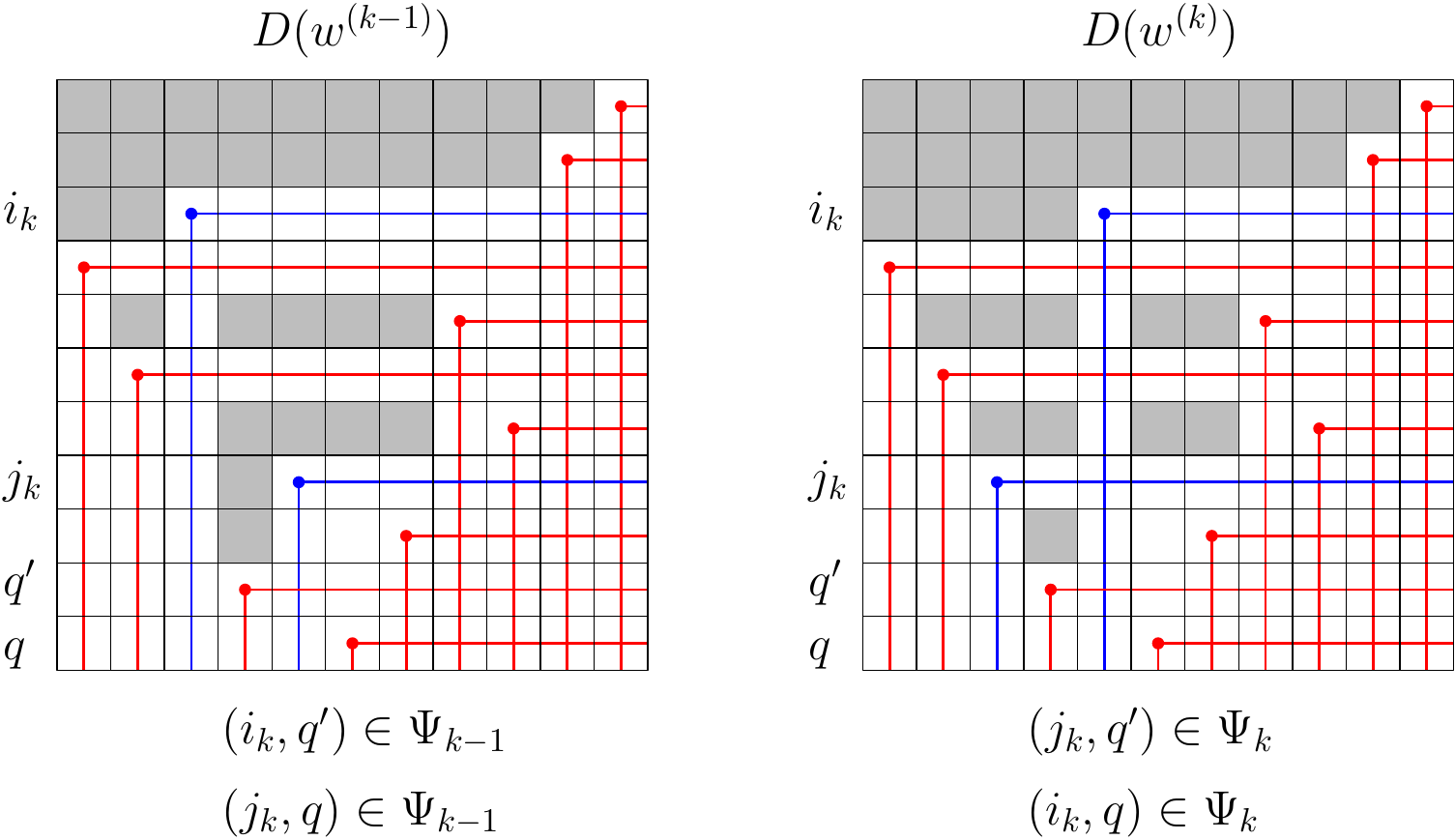}
			\caption{Visual aids for the ``adjust'' case in the proof of Lemma \ref{lem:atmostoneperrun}.}
			\label{fig:atmostonelemmavisual}
		\end{center}
	\end{figure}

	\begin{theorem}
		\label{prop:minnummarks}
		For any $w\in S_n$ and any climbing chain $C$ of $w$, 
		\[\absoverrajcode{w} \leq \#M(C). \]
	\end{theorem}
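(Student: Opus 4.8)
The plan is to feed the given chain $C$ into Algorithm \ref{alg:1}, read off the terminal set $\Omega_m$, and count. By Lemma \ref{lem:algbasicproperties}(v) we already know $\#\Omega_m = \absoverrajcode{w}$, and by Lemma \ref{lem:algbasicproperties}(i) we know $\Omega_m \subseteq \{C_1,\ldots,C_m\}$. So the entire task reduces to showing $\#\Omega_m \leq \#M(C)$.

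First I would record that the runs of $C$ form a partition of the full link set $\{C_1,\ldots,C_m\}$ into exactly $\#M(C)$ blocks. Indeed, writing $M(C) = \{C_{k_1},\ldots,C_{k_p}\}$ with $1\leq k_1<\cdots<k_p\leq \ell(C)$, the first link always lies in $M(C)$ (with the convention $i_0 = 0$ we have $i_0 < i_1$), so $k_1 = 1$; consequently the runs, being the intervals $\{C_{k_q},\ldots,C_{k_{q+1}-1}\}$ for $q\in[p]$, cover $\{C_1,\ldots,C_m\}$ with no overlap, and there are exactly $p = \#M(C)$ of them. This is the one bookkeeping point to be careful about: the runs really do partition \emph{all} of the links, not merely those past the first marking.

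Next, since $\Omega_m\subseteq\{C_1,\ldots,C_m\}$, each element of $\Omega_m$ lies in exactly one run. Lemma \ref{lem:atmostoneperrun} says each run contains at most one element of $\Omega_m$. Hence $\#\Omega_m \leq \#\{\text{runs of }C\} = \#M(C)$. Combining this with $\#\Omega_m = \absoverrajcode{w}$ from Lemma \ref{lem:algbasicproperties}(v) yields $\absoverrajcode{w}\leq \#M(C)$, as desired.

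The substantive work is entirely upstream: the delicate statement is Lemma \ref{lem:atmostoneperrun}, whose proof already handled the Bruhat-cover obstruction and the case analysis of adjust operations. Given that lemma and the basic properties of Algorithm \ref{alg:1}, the theorem itself is a short counting argument, so I do not anticipate a genuine obstacle here beyond stating the partition-into-runs observation cleanly.
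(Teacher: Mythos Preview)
Your proposal is correct and matches the paper's own proof essentially line for line: run Algorithm~\ref{alg:1} to obtain $\Omega_m$, observe that the runs partition $C$ into $\#M(C)$ blocks, invoke Lemma~\ref{lem:atmostoneperrun} to bound $\#\Omega_m$ by the number of runs, and finish with $\#\Omega_m=\absoverrajcode{w}$ from Lemma~\ref{lem:algbasicproperties}(v). The only extra detail you spell out---that $k_1=1$ so the runs genuinely cover all links---is implicit in the paper's phrase ``the number of runs equals $\#M(C)$.''
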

	\begin{proof}
		Use Algorithm \ref{alg:1} on $C$ to produce $\Omega_m$. Decompose $C$ into runs. By definition, the number of runs equals $\#M(C)$. Lemma \ref{lem:atmostoneperrun} shows there is at most one element of $\Omega_m$ in each run. Lemma \ref{lem:algbasicproperties}(v) implies $\#\Omega_m=\absoverrajcode{w}$. Putting this all together,
		\[\#M(C)\geq \#\Omega_m=\absoverrajcode{w}.\qedhere\]
	\end{proof}

	We are now ready to provide the alternative proof of the degree statement in Theorem \ref{thm:pswleadingterm}:

	\begin{corollary}[{\cite[Theorem 1.1]{PSW}}]
		\label{cor:raj} The degree of $\mathfrak{G}_w$ equals $\absrajcode{w}$.
	\end{corollary}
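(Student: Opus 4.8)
The plan is to deduce this immediately from Theorem~\ref{prop:minnummarks} together with the climbing chain formula, Theorem~\ref{thm:grothformula}. The lower bound $\deg\mathfrak{G}_w\geq\absrajcode{w}$ is already in hand as Corollary~\ref{cor:deg>=}, so the entire content is the reverse inequality $\deg\mathfrak{G}_w\leq\absrajcode{w}$.

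First I would expand $\mathfrak{G}_w$ using Theorem~\ref{thm:grothformula} as $\mathfrak{G}_w=\sum_{\xi}\mathrm{sign}(\xi)\,\bm{x}^{\wt(\xi)}$, the sum being over all marked climbing chains $\xi=(C,U)$ of $w$. For a fixed such $\xi$, the degree of the monomial $\bm{x}^{\wt(\xi)}$ is
\[
|\wt(\xi)|=\sum_{k=1}^n\bigl(n-k-\dwt(\xi)_k\bigr)=\binom{n}{2}-\sum_{k=1}^n\dwt(\xi)_k=\binom{n}{2}-\#U,
\]
since $\dwt(\xi)_k$ counts the links of the form $(k,\star)$ in $U$, so $\sum_k\dwt(\xi)_k=\#U$. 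Because $U$ is required to contain the minimal markings, $\#U\geq\#M(C)$, and Theorem~\ref{prop:minnummarks} gives $\#M(C)\geq\absoverrajcode{w}$. Hence $|\wt(\xi)|\leq\binom{n}{2}-\absoverrajcode{w}$.

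It remains to identify $\binom{n}{2}-\absoverrajcode{w}$ with $\absrajcode{w}$: indeed $\absrajcode{w}=|\rr(w)|=\sum_{k=1}^n(n-k-\orr(w)_k)=\binom{n}{2}-\absoverrajcode{w}$, using $\sum_{k=1}^n(n-k)=\binom{n}{2}$. Therefore every monomial appearing in the signed sum for $\mathfrak{G}_w$ has degree at most $\absrajcode{w}$, so $\deg\mathfrak{G}_w\leq\absrajcode{w}$ (cancellation among monomials can only lower the degree). Combining with Corollary~\ref{cor:deg>=} yields $\deg\mathfrak{G}_w=\absrajcode{w}$.

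There is essentially no obstacle left at this stage: all the difficulty has been absorbed into Theorem~\ref{prop:minnummarks} (every climbing chain of $w$ carries at least $\absoverrajcode{w}$ minimal markings) and into the computation $|\wt(\xi)|=\binom{n}{2}-\#U$. The only point to state carefully is that the bound holds term-by-term in the climbing chain expansion, so that it descends to the actual polynomial after cancellation.
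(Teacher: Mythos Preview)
Your proof is correct and follows essentially the same approach as the paper: the lower bound is Corollary~\ref{cor:deg>=}, and the upper bound comes from Theorem~\ref{prop:minnummarks} via the climbing chain formula. You have simply written out explicitly the computation $|\wt(\xi)|=\binom{n}{2}-\#U$ and the identity $\binom{n}{2}-\absoverrajcode{w}=\absrajcode{w}$ that the paper leaves implicit in the phrase ``no climbing chain of $w$ contributes a monomial of degree larger than $\absrajcode{w}$.''
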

	\begin{proof}
		By Theorem \ref{thm:strongequality} and Corollary \ref{cor:deg>=}, $\absrajcode{w}$ is a lower bound on $\deg\mathfrak{G}_w$ that is attained by the nested chain. Theorem \ref{prop:minnummarks} implies no climbing chain of $w$ contributes a monomial of degree larger than $\absrajcode{w}$. Thus, $\deg \mathfrak{G}_w=\absrajcode{w}$.
	\end{proof}

	\section{Leading term of $\mathfrak{G}_w$ in term orders with $x_1<\cdots< x_n$}
	\label{sec:rajcode}
	We complete the alternative proof of Theorem \ref{thm:pswleadingterm}. In Corollary \ref{cor:raj} we showed the degree statement; below we show that $\bm{x}^{\rr(x)}$ is the leading monomial of $\mathfrak{G}_w^{\mathrm{top}}$ in any term order satisfying $x_1<\cdots< x_n$. 
	
	\begin{lemma}
		\label{lem:drecursionanychain}
		Let $C$ be a climbing chain of $w\in S_n$ with $w\neq w_0$. Suppose $C_p=(i_p,j_p)$ for $p\in [\ell(C)]$, and set $v=wt_{i_1j_1}$. Then 
		\begin{enumerate}[label=\textup{(\roman*)}]
			\item $\orr(w)_p=0=\orr(v)_p$ for $p\in [i_1-1]$,
			\item $\orr(w)_{i_1}\geq \orr(v)_{i_1}$,
			\item $\orr(w)_p\leq \orr(v)_p$ for $i_1+1\leq p \leq n$.
		\end{enumerate}
	\end{lemma}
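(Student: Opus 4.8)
The plan is to prove the three claims directly from the definition of $\orr$ in terms of longest increasing subsequences, using only the fact that $w \lessdot v = wt_{i_1j_1}$ (which holds since $C_1$ is a link of a climbing chain, so by Lemma \ref{lem:cover} no dot of $D(w)$ lies strictly between the hooks in rows $i_1$ and $j_1$). Since $v$ is obtained from $w$ by swapping the values in positions $i_1$ and $j_1$ (with $w(i_1)<w(j_1)$), and $w$ agrees with $w_0$ in positions $1,\dots,i_1-1$, claim (i) is immediate: for $p<i_1$ both $w$ and $v$ look like $w_0$ from position $p$ onward restricted to the relevant tail, so the longest increasing run starting at $p$ has length one, giving $\orr(w)_p = 0 = \orr(v)_p$.

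For (ii), I would take any $\alpha \in \LIS(w,i_1)$. If $\alpha$ does not use position $j_1$, then $\alpha$ is still increasing in $v$ (swapping positions $i_1$ and $j_1$ can only affect those two entries), so it gives an increasing sequence in $v$ starting at $i_1$ of the same length, hence $\orr(v)_{i_1}\geq \orr(w)_{i_1}$ — wait, that's the wrong direction, so I need to be more careful. The correct argument: take $\alpha\in\LIS(v,i_1)$; I claim it can be modified to an increasing sequence in $w$ of length $\geq |\alpha|-1$ (drop $j_1$ if present), which gives $\orr(w)_{i_1}\geq \orr(v)_{i_1}-1$ only — not enough. Instead, to get $\orr(w)_{i_1}\geq\orr(v)_{i_1}$ I should argue as in the proof of Theorem \ref{thm:drecursion}: take $\alpha\in\LIS(v,i_1)$; if $j_1\notin\alpha$ then $\alpha$ is increasing in $w$ as well (since only positions $i_1,j_1$ change values and $v(i_1)=w(j_1)>w(i_1)=v(j_1)$, so entries not at $j_1$ are unchanged and still exceed $v(i_1)\geq w(i_1)$), giving $\orr(w)_{i_1}\geq|\alpha|-1=\orr(v)_{i_1}$; if $j_1\in\alpha$, say $\alpha=(i_1,j_1,\alpha_3,\dots)$ (it must be the second entry since $v(i_1)=w(j_1)$ is the smallest possible successor), replace $j_1$ by $j_1$ — no, replace the pair $(i_1,j_1)$ by just $i_1$ in $w$ using that the values $w(\alpha_3),\dots$ all exceed $w(j_1)=v(i_1)>w(i_1)$, giving an increasing sequence in $w$ of the same length. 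Either way $\orr(w)_{i_1}\geq\orr(v)_{i_1}$.

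For (iii), fix $p$ with $i_1<p\leq n$. If $p>j_1$ then $w$ and $v$ agree from position $p$ on, so $\orr(w)_p=\orr(v)_p$. If $p=j_1$, then any $\alpha\in\LIS(w,j_1)$ has $w(j_1)=v(i_1)$ as its first value, and all subsequent values exceed $w(j_1)$; since those later positions lie beyond $j_1$ where $w=v$, $\alpha$ is increasing in $v$, so $\orr(w)_{j_1}\leq\orr(v)_{j_1}$. If $i_1<p<j_1$, then since $w\lessdot v$ we have $w(p)<w(i_1)$ or $w(p)>w(j_1)$; in the first case $v(p)=w(p)$ (position $p$ is unchanged) and any increasing sequence from $p$ in $w$ avoids $j_1$ entirely or... here I would mimic the argument in Theorem \ref{thm:drecursion}: an $\alpha\in\LIS(w,p)$ avoiding $j_1$ is increasing in $v$; one containing $j_1$ can be rerouted through $i_1$'s longest tail — but actually the cleanest statement is that in every case a longest increasing sequence from $p$ in $w$ maps to an increasing sequence from $p$ in $v$ of the same length, using that swapping $i_1,j_1$ only decreases the value at $j_1$ and increases the value at $i_1$, and $p\neq i_1$. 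I expect the main obstacle to be the case $i_1<p<j_1$ with $w(p)>w(j_1)$, where one must check that dropping or rerouting around position $j_1$ does not shorten the sequence; this is exactly the content of the corresponding paragraphs in the proof of Theorem \ref{thm:drecursion}, so I would adapt that argument, observing that it used only $w\lessdot v$ and not the specific choice $(i_1,j_1)=\CN(w)_1$.
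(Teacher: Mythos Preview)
Your approach is the same as the paper's---direct verification from the definition of $\orr$ using only that $w\lessdot v$---but your execution is tangled in places where the paper's is clean.

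For (ii), the case split on whether $j_1\in\alpha$ is unnecessary: since $v(j_1)=w(i_1)<w(j_1)=v(i_1)$, no $\alpha\in\LIS(v,i_1)$ can ever contain $j_1$. Your argument for the (vacuous) case $j_1\in\alpha$ is also flawed as written: replacing the pair $(i_1,j_1)$ by the single entry $i_1$ shortens the sequence by one, contradicting your claim ``of the same length.'' The paper simply observes that any $\alpha\in\LIS(v,i_1)$ is automatically increasing in $w$ (because $j_1\notin\alpha$ and $w(i_1)<v(i_1)$), which immediately gives $\orr(w)_{i_1}\geq\orr(v)_{i_1}$.

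For (iii) with $i_1<p<j_1$, you have the hard and easy subcases reversed. When $w(p)>w(j_1)$, every value in $\alpha\in\LIS(w,p)$ exceeds $w(j_1)$, so $j_1\notin\alpha$ and $\alpha$ is unchanged in $v$; this is the easy case. When $w(p)<w(i_1)$, $\alpha$ may contain $j_1$, but no rerouting is needed: if $\alpha_k=j_1$ then $\alpha_{k-1}$ lies strictly between $i_1$ and $j_1$ with $w(\alpha_{k-1})<w(j_1)$, so the cover condition forces $w(\alpha_{k-1})<w(i_1)=v(j_1)$, and $\alpha$ is still increasing in $v$. The paper dispatches both subcases in one sentence. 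Invoking the rerouting argument from Theorem~\ref{thm:drecursion} is overkill here; that argument was needed there to prove \emph{equality}, whereas Lemma~\ref{lem:drecursionanychain} only asserts an inequality.
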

	\begin{proof}
		Claim (i) is immediate since $i_1$ since $w(k)=v(k)=n-k+1$ for $k\in[i_1]$. Since $v(i_1)>w(i_1)$, any $\alpha\in \LIS(v,i_1)$ is also an increasing sequence in $w$. This shows (ii). 
		
		Consider claim (iii). When $p>j_1$, (iii) follows since $w$ and $v$ agree after position $j_1$. Since $v(j_1)<w(j_1)$, (iii) holds when $p=j_1$. It remains to consider $i_1<p<j_1$. Since $w\lessdot v$, either $w(p)<w(i_1)$ or $w(p)>w(j_1)$. In either case, any $\alpha\in \LIS(w,p)$ is also an increasing sequence in $v$. This proves (iii).
	\end{proof}
	
	\begin{definition}
		We call a climbing chain $C$ of $w\in S_n$ \emph{heavy} if $\#M(C) = \absoverrajcode{w}$.
	\end{definition}
	We choose the name ``heavy'' since such chains $C$ will have maximal total weight: $|\wt(C)| = \absrajcode{w}$.

	\begin{lemma}
		\label{lem:heavytruncation}
		Let $C$ be a climbing chain of $w\in S_n$, and assume the notation of Algorithm \ref{alg:1}.
		Suppose $C_k$ is the last link in $C$ of the form $(i_1,\star)$. Then 
		\begin{itemize}
			\item $\orr(w)_p=0=\orr(w^{(k)})_p$ for $p\in [i_1-1]$,
			\item $\orr(w)_{i_1}\geq \orr(w^{(k)})_{i_1}=0$,
			\item $\orr(w)_p\leq \orr(w^{(k)})_p$ for $i_1+1\leq p \leq n$.
		\end{itemize}
		whenever $i_1+1\leq p\leq n$. Additionally if $C$ is heavy, then the truncation $C'=(C_{k+1},\ldots,C_m)$ is heavy (as a climbing chain of $w^{(k)}$).
	\end{lemma}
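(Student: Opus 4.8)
The three displayed comparisons follow from iterating Lemma \ref{lem:drecursionanychain}. First I would note that, since the first coordinates of the links of a climbing chain weakly increase and $C_k$ is the last link of the form $(i_1,\star)$, one has $i_1=i_2=\cdots=i_k$, and each of $w^{(0)},\ldots,w^{(k-1)}$ is distinct from $w_0$. Applying Lemma \ref{lem:drecursionanychain} to the truncation $(C_p,\ldots,C_m)$ of $C$ — a climbing chain of $w^{(p-1)}$ whose first link is $(i_1,j_p)$ — compares $\orr(w^{(p-1)})$ with $\orr(w^{(p)})$, and chaining these comparisons over $p=1,\ldots,k$ gives $\orr(w)_p=0=\orr(w^{(k)})_p$ for $p<i_1$, $\orr(w)_{i_1}\ge\orr(w^{(k)})_{i_1}$, and $\orr(w)_p\le\orr(w^{(k)})_p$ for $p>i_1$. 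To finish I would observe that the remaining links $C_{k+1},\ldots,C_m$ all have first (hence also second) coordinate $>i_1$, so positions $1,\ldots,i_1$ are untouched after step $k$ and $w^{(k)}(p)=w_0(p)=n+1-p$ for $p\le i_1$; in particular $w^{(k)}(i_1)=n+1-i_1$ exceeds $w^{(k)}(j)$ for all $j>i_1$, forcing $\orr(w^{(k)})_{i_1}=0$. (If $k=m$, then $w^{(k)}=w_0$ and all of this is immediate.)

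For the heaviness assertion, assume $k<m$; the case $k=m$ has $C'=\emptyset$ and $\absoverrajcode{w_0}=0$, so nothing needs to be shown. Set $r=\#\bigl(M(C)\cap\{C_1,\ldots,C_k\}\bigr)$. Since $C_{k+1}$ has first coordinate strictly larger than $i_k=i_1$, it lies in both $M(C)$ and $M(C')$, and for every later link the local condition defining membership in $M(C)$ agrees with the one defining membership in $M(C')$; I would use this to conclude that $\{C_1,\ldots,C_k\}$ is precisely the union of the first $r$ runs of $C$ and that $\#M(C')=\#M(C)-r=\absoverrajcode{w}-r$, using that $C$ is heavy. The goal is then to show $\#M(C')=\#\Psi_k$ and $\#\Psi_k\le\absoverrajcode{w^{(k)}}$, which with Theorem \ref{prop:minnummarks} pins down $\#M(C')=\absoverrajcode{w^{(k)}}$.

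To get $\#M(C')=\#\Psi_k$, I would run Algorithm \ref{alg:1} on $C$. Heaviness makes the number of runs of $C$ equal to $\#M(C)=\absoverrajcode{w}=\#\Omega_m$ (Lemma \ref{lem:algbasicproperties}(v)), and since Lemma \ref{lem:atmostoneperrun} allows at most one element of $\Omega_m$ per run, a pigeonhole count forces exactly one per run. Lemma \ref{lem:algbasicproperties}(i)--(iii) gives $\Omega_k=\Omega_m\cap\{C_1,\ldots,C_k\}$, and since $\{C_1,\ldots,C_k\}$ is the union of the first $r$ runs, $\#\Omega_k=r$; then Lemma \ref{lem:algbasicproperties}(iv) yields $\#\Psi_k=\absoverrajcode{w}-\#\Omega_k=\absoverrajcode{w}-r=\#M(C')$. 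For the inequality $\#\Psi_k\le\absoverrajcode{w^{(k)}}$, fix $q$, let $(q,\beta^q_1),\ldots,(q,\beta^q_{l_q})$ be the elements of $\Psi_k$ with first coordinate $q$ (so $\beta^q_1<\cdots<\beta^q_{l_q}$), and invoke Lemma \ref{lem:algkeyproperty}: $(q,\beta^q_1,\ldots,\beta^q_{l_q})$ is an increasing subsequence of $w^{(k)}$ from $q$, so $\orr(w^{(k)})_q\ge l_q$; summing over $q$ gives $\absoverrajcode{w^{(k)}}\ge\#\Psi_k$. Combining, $\#M(C')=\#\Psi_k\le\absoverrajcode{w^{(k)}}$, while Theorem \ref{prop:minnummarks} gives the reverse inequality (as $C'$ is a climbing chain of $w^{(k)}$), so $\#M(C')=\absoverrajcode{w^{(k)}}$ and $C'$ is heavy.

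The routine parts are the chaining in the first paragraph and the increasing-subsequence bound via Lemma \ref{lem:algkeyproperty}. The hard part will be the bookkeeping that identifies $\#M(C')$ with $\#\Psi_k$: one must simultaneously see that the first $k$ links of $C$ decompose into complete runs, that heaviness forces each run of $C$ to carry exactly one element of $\Omega_m$, and that $\Omega_k$ is the restriction of $\Omega_m$ to those first $k$ links. Care is also needed to keep the global link-indexing of $C$ consistent with the re-indexed chain $C'$ when comparing minimal markings and runs.
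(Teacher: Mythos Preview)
Your argument is correct. For the three itemized comparisons you do exactly what the paper does: iterate Lemma~\ref{lem:drecursionanychain} along the first $k$ links, with the extra remark that $w^{(k)}(i_1)=n+1-i_1$ forces $\orr(w^{(k)})_{i_1}=0$.

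For the heaviness of $C'$, however, you take a genuinely different and much longer route than the paper. The paper's proof is a two-line swap: if $C'$ were not heavy, choose any heavy chain $C''$ of $w^{(k)}$ and concatenate $(C_1,\ldots,C_k)$ with $C''$; the result is a climbing chain of $w$ with strictly fewer minimal markings than $C$ (since $\#M(C'')<\#M(C')$ and the join at position $k+1$ is marked in both concatenations), contradicting Theorem~\ref{prop:minnummarks}. No run-counting, no $\Psi_k$ or $\Omega_k$, no Algorithm~\ref{alg:1} at all. Your approach instead computes $\#M(C')$ exactly as $\#\Psi_k$ via a pigeonhole on runs, and then bounds $\#\Psi_k\le\absoverrajcode{w^{(k)}}$ using Lemma~\ref{lem:algkeyproperty}. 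This works, and it does extract a little more information (you identify $\#M(C')$ with $\#\Psi_k$ and show each run of a heavy chain carries exactly one element of $\Omega_m$), but for the lemma as stated it is substantial overkill compared to the contradiction argument.
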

	\begin{proof}
		The itemized claims follow from repeated applications of Lemma \ref{lem:drecursionanychain}. To see the final claim, suppose $C'$ is not heavy. Concatenating $(C_1,\ldots,C_{k})$ with a heavy chain for $(w^{(k)})$ yields a climbing chain of $w$ with fewer minimal marking than $C$, implying that $C$ is not heavy.
	\end{proof}

	\begin{lemma}
		\label{lem:heavymaxfirstmarks}
		Let $C$ be a heavy climbing chain of $w\in S_n$. Then
		\[\dwt(C,M(C))_{i_1}\leq \orr(w)_{i_1}. \]
	\end{lemma}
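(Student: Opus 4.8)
The plan is to bound the number of links of the form $(i_1,\star)$ that lie in $M(C)$, since $\dwt(C,M(C))_{i_1}$ counts exactly these, and $\orr(w)_{i_1}$ is the length of a longest increasing sequence from $i_1$ minus one. First I would collect the structural facts about how the index $i_1$ behaves along a climbing chain: by the definition of climbing chains, $i_1 = \min\{k : w(k)\neq n+1-k\}$, and once the chain stops taking steps of the form $(i_1,\star)$ it never returns to row $i_1$ (the $i_p$ are weakly increasing). So the links of the form $(i_1,\star)$ form an initial segment $C_1,\ldots,C_k$ of $C$, and I would set $w^{(k)}=wt_{i_1j_1}\cdots t_{i_kj_k}$, noting $w^{(k)}(i_1)=n+1-i_1$ by the stopping condition, so $\orr(w^{(k)})_{i_1}=0$ (as recorded in Lemma \ref{lem:heavytruncation}).

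The main step is to show that among $C_1,\ldots,C_k$, at most $\orr(w)_{i_1}$ are minimal markings. A link $C_p=(i_1,j_p)$ with $p\leq k$ is a minimal marking precisely when $p=1$, or $i_{p-1}<i_1$ (impossible here since $i_1$ is the first index touched), or $i_{p-1}=i_1$ and $j_{p-1}<j_p$. So the marked links in this initial segment are exactly the positions where $j_p$ strictly exceeds the previous column index $j_{p-1}$; equivalently, writing the sequence of column indices $j_1,j_2,\ldots,j_k$, the number of minimal markings among $C_1,\ldots,C_k$ is one plus the number of strict ascents in $j_1,\ldots,j_k$, which equals the number of distinct "maximal runs going up" — but more usefully, it equals the number of left-to-right maxima of the sequence $j_1,\ldots,j_k$ only if the runs are right; I would instead argue directly. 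Each time a marked link $(i_1,j_p)$ occurs (after the first), we have $j_p > j_{p-1}$, and the chain will then decrease $j$ down through a run before the next marked $(i_1,\star)$. The hook in row $i_1$ of $D(w^{(p-1)})$ sits in column $w^{(p-1)}(i_1)$, and each marked step moves it to a strictly larger column than where the previous run of $(i_1,\star)$-steps ended. Using the graphical description of cover relations (Lemma \ref{lem:cover}) together with Lemma \ref{lem:drecursionanychain} applied iteratively to the truncations, I would show that each marked link $(i_1,j_p)$ corresponds to adjoining a genuinely new, larger value $w(j_p)$ to an increasing chain starting at $i_1$ in the original permutation $w$: concretely, if the marked $(i_1,\star)$-links occur at steps $p_1=1<p_2<\cdots<p_r$, then $w(j_{p_1}),w(j_{p_2}),\ldots,w(j_{p_r})$ is strictly increasing and each exceeds $w(i_1)$, so $(i_1, j_{p_1},\ldots,j_{p_r})$ — or a subsequence realizing these values in $w$ — is an increasing subsequence of $w$ of length $r+1$. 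This forces $r+1 \leq \orr(w)_{i_1}+1$, i.e. $\dwt(C,M(C))_{i_1} = r \leq \orr(w)_{i_1}$.

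The hard part will be verifying that the values $w(j_{p_1}),\ldots,w(j_{p_r})$ actually form an increasing subsequence \emph{of $w$ itself}, not merely of the intermediate permutations, since the transpositions $t_{i_1 j_p}$ permute positions along the way. The clean way around this is to track, for each marked step, the column index of the hook in row $i_1$ just before that step: since transpositions $t_{i_1,j}$ with $j$ in a later position only move hooks located in rows $\geq i_1$, and the entries $w^{(p)}(i_1)$ are strictly increasing across the marked steps (each marked step moves the row-$i_1$ hook strictly rightward past where it was left, by the $j_{p}>j_{p-1}$ condition and Lemma \ref{lem:cover}), I can read off from the original diagram $D(w)$ a nested sequence of hooks — equivalently an increasing subsequence of $w$ from position $i_1$ — of length $r+1$. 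I would make this precise by induction on the number of marked $(i_1,\star)$-links, peeling off the links before the second marked one via Lemma \ref{lem:drecursionanychain}(ii) (which gives $\orr(w)_{i_1}\geq \orr(wt_{i_1j_1})_{i_1}$) and tracking how the longest-increasing-sequence length from position $i_1$ can drop by at most one per marked step and stays constant across unmarked (descending-$j$) steps. Combining, $\orr(w)_{i_1}$ is at least the number of marked $(i_1,\star)$-links, which is exactly $\dwt(C,M(C))_{i_1}$.
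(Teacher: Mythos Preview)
Your argument never invokes the hypothesis that $C$ is heavy, so if it worked it would prove $\dwt(C,M(C))_{i_1}\leq \orr(w)_{i_1}$ for \emph{every} climbing chain of $w$. That statement is false. Take $w=256341$ from Example~\ref{exp:31452marked}: here $\orr(w)_1=2$, but the greedy chain $C^{(6)}=((1,4),(1,5),(1,2),(1,3),(2,3),(4,5))$ has minimal markings on the $(1,\star)$-links $(1,4),(1,5),(1,3)$, so $\dwt(C^{(6)},M(C^{(6)}))_1=3>2$. (This chain is not heavy: $\#M(C^{(6)})=5>4=\absoverrajcode{w}$.) Running your construction on this chain exposes exactly the ``hard part'' you flagged: the marked $(1,\star)$-links have $j$-values $4,5,3$, and although $w(4),w(5),w(3)=3,4,6$ are increasing values, the positions $4,5,3$ are not increasing, so you do not obtain an increasing subsequence of $w$ from position $1$ in the sense of Definition~\ref{def:1}. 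Your fallback claim that $\orr(\cdot)_{i_1}$ ``stays constant across unmarked (descending-$j$) steps'' also fails on this example: at the unmarked step $(1,2)$ from $456231$ to $546231$, $\orr(\cdot)_1$ drops from $2$ to $1$. And even if those step-by-step claims held, note that ``drops by at most one per marked step, constant otherwise'' would yield $\orr(w)_{i_1}\leq r$, the wrong direction.

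The paper's proof is a global counting argument in which heaviness is essential. It truncates $C$ after the last $(i_1,\star)$-link to get a chain $C'$ for $w^{(k)}$, uses Lemma~\ref{lem:heavytruncation} to see that heaviness of $C$ forces heaviness of $C'$, and then compares the two equalities $\#M(C)=\absoverrajcode{w}$ and $\#M(C')=\absoverrajcode{w^{(k)}}$. Since $\dwt(C,M(C))$ and $\dwt(C',M(C'))$ agree in every coordinate except $i_1$, subtracting and expanding $\absoverrajcode{\cdot}=\sum_p\orr(\cdot)_p$ gives
\[
\bigl(\dwt(C,M(C))_{i_1}-\orr(w)_{i_1}\bigr)+\sum_{p>i_1}\bigl(\orr(w^{(k)})_p-\orr(w)_p\bigr)=0,
\]
and Lemma~\ref{lem:drecursionanychain} (iterated as in Lemma~\ref{lem:heavytruncation}) makes each term in the sum nonnegative, forcing the first parenthesis to be nonpositive. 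No explicit increasing subsequence is ever built.
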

	\begin{proof}
		Assume the notation of Algorithm \ref{alg:1}. Suppose $k$ is the index of the last link in $C$ of the form $(i_1,\star)$ and $C'=(C_{k+1},\ldots,C_m)$.
		By the choice of $k$, we have
		\begin{itemize}
			\item $\dwt(C,M(C))_p=\dwt(C',M(C'))_p=0$ for $p<i_1$,
			\item $\dwt(C,M(C))_{i_1} >0$,
			\item $\dwt(C',M(C'))_{i_1} =0$,
			\item $\dwt(C,M(C))_p=\dwt(C',M(C'))_p$ for $p>i_1$.
		\end{itemize}
		Observe that the choice of $k$ implies $\orr(w^{(k)})_{p}=0$ for $p\leq i_1$. Applying Lemma \ref{lem:heavytruncation},
		\begin{align*}
			\left|\dwt(C,M(C))\right| &= \sum_{p=1}^{n}\dwt(C,M(C))_p \\
								 			 &= \dwt(C,M(C))_{i_1} + \sum_{p=i_1+1}^{n}\dwt(C,M(C))_{p} \\
								 			 &= \dwt(C,M(C))_{i_1} + \sum_{p=i_1+1}^{n}\dwt(C',M(C'))_{p} \\
								 			 &= \dwt(C,M(C))_{i_1} + \left|\dwt(C',M(C'))\right| \\
								 			 &= \dwt(C,M(C))_{i_1} +\absoverrajcode{w^{(k)}} \\
								 			 &= \dwt(C,M(C))_{i_1} +\orr(w^{(k)})_{i_1+1}+\cdots +\orr(w^{(k)})_n.
		\end{align*}
		On the other hand, $C$ being heavy implies
		\begin{align*}
			\left|\dwt(C,M(C))\right| &= \absoverrajcode{w}\\
											 &= \orr(w)_{i_1}+\cdots+\orr(w)_n.
		\end{align*}
		Hence,
		\begin{align}
			\label{eqn:1}
			\biggr(\dwt(C,M(C))_{i_1}-\orr(w)_{i_1}\biggr)+\sum_{p=i_1+1}^{n} \biggr(\orr(w^{(k)})_{p} -\orr(w)_{p}\biggr)=0.
		\end{align}
		By Lemma \ref{lem:drecursionanychain}, each term in the summation is nonnegative, so the leftmost term must be nonpositive. Thus,
		\[\dwt(C,M(C))_{i_1}\leq \orr(w)_{i_1}. \] 
	\end{proof}
		
	\begin{lemma}
		\label{lem:heavyequalitycase}
		Let $C$ be a heavy climbing chain of $w\in S_n$. Assume the notation of Algorithm \ref{alg:1}. 
		Suppose that $k$ is the index of the last link in $C$ of the form $(i_1,\star)$, and
		\[\dwt(C,M(C))_{i_1}= \orr(w)_{i_1}. \]
		Then
		\[
			\orr(w^{(k)})_p = \orr(w)_p \quad\mbox{for } p\geq i_1+1. 
		\]
	\end{lemma}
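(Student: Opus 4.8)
The plan is to piggyback on the computation already carried out in the proof of Lemma~\ref{lem:heavymaxfirstmarks}. For a heavy chain $C$ with $k$ the index of its last link of the form $(i_1,\star)$, that proof established the identity \eqref{eqn:1}, namely
\[
\biggl(\dwt(C,M(C))_{i_1}-\orr(w)_{i_1}\biggr)+\sum_{p=i_1+1}^{n}\biggl(\orr(w^{(k)})_p-\orr(w)_p\biggr)=0,
\]
and, via Lemma~\ref{lem:heavytruncation} (hence ultimately Lemma~\ref{lem:drecursionanychain}), that each summand $\orr(w^{(k)})_p-\orr(w)_p$ with $p\geq i_1+1$ is a nonnegative integer. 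Under the present hypothesis the first parenthesized term equals $0$, so a sum of nonnegative integers equals $0$; therefore every summand vanishes, i.e. $\orr(w^{(k)})_p=\orr(w)_p$ for all $p\geq i_1+1$. That is the assertion.

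Concretely, I would first recall (or, for a self-contained write-up, re-derive) \eqref{eqn:1}: split $\left|\dwt(C,M(C))\right|$ and $\left|\orr(w^{(k)})\right|$ at coordinate $i_1$, use heaviness of $C$ to get $\left|\dwt(C,M(C))\right|=\absoverrajcode{w}$, use heaviness of the truncation $C'=(C_{k+1},\dots,C_m)$ (guaranteed by Lemma~\ref{lem:heavytruncation}) to get $\left|\dwt(C',M(C'))\right|=\absoverrajcode{w^{(k)}}$, and use that the coordinates of $\orr(w^{(k)})$ and of $\dwt(C',M(C'))$ indexed by $1,\dots,i_1$ all vanish by the choice of $k$. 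Then I would substitute the equality hypothesis and read off the coordinatewise equality from the sign constraint on the summands.

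There is essentially no real obstacle here: the entire content is already contained in Lemma~\ref{lem:heavymaxfirstmarks} and its supporting lemmas. The only point demanding care is consistency of notation --- the index $k$ must be the \emph{same} ``last link of the form $(i_1,\star)$'' used to derive \eqref{eqn:1}, so that the two coordinate decompositions line up and the vanishing of the low coordinates of $\orr(w^{(k)})$ is legitimate; once that is pinned down, the conclusion is immediate.
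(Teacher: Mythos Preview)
Your proposal is correct and is exactly the approach the paper takes: the authors' proof is the single sentence ``This is an immediate consequence of (\ref{eqn:1}),'' and your argument spells out precisely why (the first term vanishes by hypothesis, the remaining summands are nonnegative by Lemma~\ref{lem:heavytruncation}, hence all vanish).
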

	\begin{proof}
		This is an immediate consequence of (\ref{eqn:1}).
	\end{proof}

	\begin{theorem}
		\label{thm:leadingchainweight}
		For $w\in S_n$ and any term order satisfying $x_1<x_2<\cdots<x_n$,
		\[\bm{x}^{\orr(w)} = \min \left\{\bm{x}^{\dwt(C,M(C))} \mid C \mbox{ is a heavy climbing chain of } w \right\}. \] 
		Equivalently,
		\[\bm{x}^{\rr(w)} = \max \left\{\bm{x}^{\wt(C,M(C))} \mid C \mbox{ is a heavy climbing chain of } w \right\}. \] 
	\end{theorem}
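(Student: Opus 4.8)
The plan is to induct on $m=\ell(w_0)-\ell(w)$, tracking how the dual weight of a heavy chain decomposes when one peels off the initial block of links of the form $(i_1,\star)$. The base case $w=w_0$ is trivial since the only chain is empty. For the inductive step, fix $w\neq w_0$ and let $C$ be any heavy climbing chain of $w$; write $i_1$ for the first position where $w$ differs from $w_0$, let $C_k$ be the last link of $C$ of the form $(i_1,\star)$, and set $w^{(k)}$ to be the permutation reached after step $k$. By Lemma \ref{lem:heavytruncation}, the truncation $C'=(C_{k+1},\ldots,C_m)$ is a heavy climbing chain of $w^{(k)}$, and by construction $\dwt(C,M(C))_p = \dwt(C',M(C'))_p$ for all $p>i_1$, while $\dwt(C,M(C))_p = 0 = \dwt(C',M(C'))_p$ for $p<i_1$. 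Thus $\bm{x}^{\dwt(C,M(C))} = x_{i_1}^{\,d}\cdot \bm{x}^{\dwt(C',M(C'))}$ where $d = \dwt(C,M(C))_{i_1}$.

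First I would bound the $x_{i_1}$-exponent: Lemma \ref{lem:heavymaxfirstmarks} gives $d \leq \orr(w)_{i_1}$. Next I would invoke the nested chain: by Theorem \ref{thm:strongequality}, $\dwt(\CN(w),M(\CN(w))) = \orr(w)$, and by Corollary \ref{cor:weakequality} together with Theorem \ref{prop:minnummarks}, $\CN(w)$ is heavy; so $\bm{x}^{\orr(w)}$ is achieved by an actual heavy chain, which shows the minimum is at most $\bm{x}^{\orr(w)}$. For the reverse, I must show $\bm{x}^{\dwt(C,M(C))} \geq \bm{x}^{\orr(w)}$ in the given term order for every heavy $C$. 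Writing $\orr(w) = \orr(w)_{i_1}\cdot e_{i_1} + \mathbf{r}$ where $\mathbf{r}$ is supported on coordinates $>i_1$, and using that Lemma \ref{lem:heavytruncation} also gives $\orr(w)_p \leq \orr(w^{(k)})_p$ for $p>i_1$, the point is that lowering the $x_{i_1}$-exponent (from $\orr(w)_{i_1}$ to $d$) can only happen at the cost of raising exponents on strictly later variables — and since $x_{i_1} < x_p$ for $p>i_1$, such a trade \emph{increases} the monomial. More precisely, I would split into two cases. If $d < \orr(w)_{i_1}$, then since $d\geq 0$ and the deficit in $x_{i_1}$ must be compensated among later coordinates (using $|\dwt(C,M(C))| = \absoverrajcode{w} = |\orr(w)|$ and the coordinatewise inequalities), $\bm{x}^{\dwt(C,M(C))}$ is strictly larger than a monomial obtained from $\bm{x}^{\orr(w)}$ by moving powers of $x_{i_1}$ up to later variables, hence $\bm{x}^{\dwt(C,M(C))} > \bm{x}^{\orr(w)}$. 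If $d = \orr(w)_{i_1}$, then Lemma \ref{lem:heavyequalitycase} gives $\orr(w^{(k)})_p = \orr(w)_p$ for all $p>i_1$, so $\mathbf{r}$ equals $\orr(w^{(k)})$ restricted to coordinates $>i_1$, which is all of $\orr(w^{(k)})$ since the earlier coordinates vanish; hence $\bm{x}^{\dwt(C,M(C))} = x_{i_1}^{\orr(w)_{i_1}}\cdot \bm{x}^{\dwt(C',M(C'))}$ and the inductive hypothesis applied to the heavy chain $C'$ of $w^{(k)}$ gives $\bm{x}^{\dwt(C',M(C'))} \geq \bm{x}^{\orr(w^{(k)})} = \bm{x}^{\mathbf{r}}$, so multiplying by $x_{i_1}^{\orr(w)_{i_1}}$ yields $\bm{x}^{\dwt(C,M(C))} \geq \bm{x}^{\orr(w)}$.

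The ``Equivalently'' reformulation is immediate from the definition $\wt(\xi)_k = n-k-\dwt(\xi)_k$: the map $v \mapsto \overline{v}$ (vector complement) reverses the term order on monomials with a fixed total degree, and all heavy chains give monomials of the common degree $\absrajcode{w}$, so minimizing $\bm{x}^{\dwt}$ is the same as maximizing $\bm{x}^{\wt}$.

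The main obstacle I anticipate is the first case ($d<\orr(w)_{i_1}$): making rigorous the intuition that ``the deficit must move to strictly later coordinates, so the monomial goes up.'' This requires comparing $\bm{x}^{\dwt(C,M(C))}$ against $\bm{x}^{\orr(w)}$ directly rather than via the recursion, and one has to be careful that $\dwt(C,M(C))$ need not be coordinatewise comparable to $\orr(w)$ — only the relation $|\dwt(C,M(C))| = |\orr(w)|$ together with the one-sided bounds $\dwt(C,M(C))_{i_1}\leq \orr(w)_{i_1}$ and (from Lemmas \ref{lem:heavytruncation} and \ref{lem:drecursionanychain}) enough control on the tail is available. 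The clean way to handle it is to induct on $\orr(w)_{i_1} - d$: if $d<\orr(w)_{i_1}$ one shows $\bm{x}^{\dwt(C,M(C))}$ strictly exceeds the monomial $x_{i_1}^{\,d+1}\cdot(\text{something})$ that would arise from a heavy chain with one more $(i_1,\star)$ marking, and iterate; but the details of producing that comparison chain are exactly where care is needed, and this is where the bulk of the argument will live.
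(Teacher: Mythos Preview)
Your approach is essentially the paper's: the induction that peels off the block of $(i_1,\star)$ links and passes to $w^{(k)}$ is exactly what the paper means by applying Lemmas~\ref{lem:heavymaxfirstmarks} and~\ref{lem:heavyequalitycase} ``sequentially.'' The equality case $d=\orr(w)_{i_1}$ is handled correctly, and the ``equivalently'' clause is fine.

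The obstacle you flag in the case $d<\orr(w)_{i_1}$ is illusory, and you should \emph{not} introduce a secondary induction on $\orr(w)_{i_1}-d$ or try to manufacture a comparison chain with one more $(i_1,\star)$ marking. Simply apply the main inductive hypothesis to $C'$ uniformly: Lemma~\ref{lem:heavytruncation} says $C'$ is heavy for $w^{(k)}$ regardless of $d$, so induction gives $\bm{x}^{\dwt(C',M(C'))}\geq \bm{x}^{\orr(w^{(k)})}$, and hence
\[
\bm{x}^{\dwt(C,M(C))}=x_{i_1}^{\,d}\cdot \bm{x}^{\dwt(C',M(C'))}\;\geq\; x_{i_1}^{\,d}\cdot \bm{x}^{\orr(w^{(k)})}.
\]
It remains to see $x_{i_1}^{\,d}\cdot \bm{x}^{\orr(w^{(k)})}\geq \bm{x}^{\orr(w)}$. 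Lemma~\ref{lem:heavytruncation} gives $\orr(w^{(k)})_p\geq \orr(w)_p$ for $p>i_1$ and $\orr(w^{(k)})_p=0$ for $p\leq i_1$; writing $f_p=\orr(w^{(k)})_p-\orr(w)_p\geq 0$ and $c=\orr(w)_{i_1}-d\geq 0$, the degree count $|\orr(w^{(k)})|=\absoverrajcode{w}-d$ forces $\sum_{p>i_1}f_p=c$. The desired inequality then reduces to
\[
\prod_{p>i_1} x_p^{\,f_p}\;\geq\; x_{i_1}^{\,c},
\]
which holds in any term order with $x_1<\cdots<x_n$ by multiplicativity, since $x_p>x_{i_1}$ and $f_p\geq 0$. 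This single argument covers both cases (the equality case is just $c=0$), so Lemma~\ref{lem:heavyequalitycase} is not actually needed and no auxiliary chain construction is required.
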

	\begin{proof}
		It is immediate from the definition of $\wt$ that the two assertions of the theorem are equivalent. We focus on the first. Since all heavy climbing chains have the same number of minimal markings, the theorem is equivalent to proving 
		\[\bm{x}^{\orr(w)} = \max \left\{\bm{x}^{\dwt(C,M(C))} \mid C \mbox{ is a heavy climbing chain of } w \right\}. \] 
		in any term order with $x_1>x_2>\cdots>x_n$. 
		For each $p\in[n]$, let $k_p$ be the index of the last link in $C$ of the form $(p,\star)$. The theorem follows by applying Lemmas \ref{lem:heavymaxfirstmarks} and \ref{lem:heavyequalitycase} (sequentially) to each of $w^{(k_1)},w^{(k_2)},\ldots,w^{(k_{n-1})}$.
	\end{proof}
	
	\noindent \textbf{Alternate Proof of 	Theorem \ref{thm:pswleadingterm}:} 
	Combine Theorem \ref{thm:leadingchainweight} and Theorem \ref{thm:grothformula}. \qed 
	
	\section{Interpolating Chains}
	\label{sec:inter}
	
	We define climbing chains that interpolate between the greedy and nested chains. We show that the monomials predicted by Hafner (Conjecture \ref{conj:elena}) to be the leading monomials of the homogeneous components of the Grothendieck polynomial $\mathfrak{G}_w$ in any term order satisfying $x_1<\cdots<x_n$ arise from these interpolating climbing chains. In the next section we show that they are indeed the leading monomials (confirming Conjecture \ref{conj:elena}).
	
	\begin{definition}
		Let $w\in S_n$ and $C$ be a climbing chain of $w$. 
		A link $C_p=(i_p,j_p)$ is called \emph{greedy} if $\greedy(wt_{i_1j_1}\cdots t_{i_{p-1}j_{p-1}}) = (i_p,j_p)$, and is called \emph{nested} if $\nested(wt_{i_1j_1}\cdots t_{i_{p-1}j_{p-1}}) = (i_p,j_p)$.
	\end{definition}
	Clearly $\CG(w)$ is the unique climbing chain of $w$ such that every link is greedy, and analogously for $\CN(w)$. Note that a given link can be both greedy and nested. We now define a family of chains that interpolate between $\CG(w)$ and $\CN(w)$.
	
	\begin{definition}
		\label{def:interpolatingchains}
		Fix $w\in S_n$ and let $m$ be the length of any climbing chain of $w$. We define the \emph{interpolating chains} of $w$ to be the climbing chains $I^0(w),\ldots,I^m(w)$ constructed as follows. 
		For $0\leq k\leq m$, define $I^k(w)$ to be the unique climbing chain of $w$ consisting of $m-k$ greedy links followed by $k$ nested links.
	\end{definition}
	
	Observe that $\CN(w) = I^m(w)$ and $\CG(w) = I^{0}(w) = I^{1}(w)$, since the final link in a chain must be both nested and greedy.
	
	\begin{example}
		\label{exp:5721463intchains}
		Let $w=5721463$. The distinct interpolating climbing chains of $w$ together with their underlying permutations are (with nested steps blue and greedy steps red)
		\begin{align*}
		I^0(w): 5721463 \textcolor{red}{\xrightarrow{(1,6)}} 6721453
		\textcolor{red}{\xrightarrow{(1,2)}} 7621453
		&\textcolor{red}{\xrightarrow{(3,7)}} 7631452
		\textcolor{red}{\xrightarrow{(3,5)}} 7641352
		\textcolor{red}{\xrightarrow{(3,6)}} 7651342\\ 
		\textcolor{red}{\xrightarrow{(4,7)}} 7652341
		&\textcolor{red}{\xrightarrow{(4,5)}} 7653241
		\textcolor{red}{\xrightarrow{(4,6)}} 7654231
		\textcolor{red}{\xrightarrow{(5,6)}} 7654321,\\
		\phantom{a}\\
		I^4(w): 5721463 \textcolor{red}{\xrightarrow{(1,6)}} 6721453
		\textcolor{red}{\xrightarrow{(1,2)}} 7621453
		&\textcolor{red}{\xrightarrow{(3,7)}} 7631452
		\textcolor{red}{\xrightarrow{(3,5)}} 7641352
		\textcolor{red}{\xrightarrow{(3,6)}} 7651342\\ 
		\textcolor{blue}{\xrightarrow{(4,5)}} 7653142
		&\textcolor{blue}{\xrightarrow{(4,6)}} 7654132
		\textcolor{blue}{\xrightarrow{(5,7)}} 7654231
		\textcolor{blue}{\xrightarrow{(5,6)}} 7654321,\\
		\phantom{a}\\
		I^7(w): 5721463 \textcolor{red}{\xrightarrow{(1,6)}} 6721453
		\textcolor{red}{\xrightarrow{(1,2)}} 7621453
		&\textcolor{blue}{\xrightarrow{(3,5)}} 7641253
		\textcolor{blue}{\xrightarrow{(3,6)}} 7651243
		\textcolor{blue}{\xrightarrow{(4,5)}} 7652143\\ 
		\textcolor{blue}{\xrightarrow{(4,7)}} 7653142
		&\textcolor{blue}{\xrightarrow{(4,6)}} 7654132
		\textcolor{blue}{\xrightarrow{(5,7)}} 7654231
		\textcolor{blue}{\xrightarrow{(5,6)}} ,7654321.
		\end{align*}
		The full collection of interpolating chains is 
		\begin{align*}
			I^0(w)&=I^1(w)=I^2(w)=I^3(w),\\
			I^4(w)&=I^5(w)=I^6(w),\\
			I^7(w)&=I^8(w)=I^9(w).
		\end{align*}
	\end{example}
	
	\begin{definition}
		To each permutation $w\in S_n$, we associate a sequence of vectors \[\leads(w)=(L_w(0),\ldots,L_w(d)),\] where $d=\absrajcode{w}-\absinvcode{w}$. Set $L_w(0)=\cc(w)$. Define $L_w(i)$ for $i\in [d]$ as follows: set $j\in [n]$ to be the largest index such that $L_w(i-1)_j<\rr(w)_j$, and define $L_w(i) = L_w(i-1) + e_j$.
	\end{definition}
	
	Recall that $\rr(w)_k\geq \cc(w)_k$ for all $k\in [n]$, so $L_w(d) = \rr(w)$. 
	
	\begin{example}
		\label{exp:5721463leads}
		Continuing Example \ref{exp:5721463intchains}, when $w=5721463$ we compute 
		\begin{align*}
		\cc(w) &= (4, 5, 1, 0, 1, 1, 0),\\
		\rr(w) &= (5, 5, 2, 1, 1, 1, 0).
		\end{align*}
		Thus $\leads(w) = (L_w(0), L_w(1), L_w(2), L_w(3))$, where
		\begin{align*}
		L_w(0) &= (4, 5, 1, 0, 1, 1, 0),\\
		L_w(1) &= (4, 5, 1, \textbf{1}, 1, 1, 0),\\
		L_w(2) &= (4, 5, \textbf{2}, 1, 1, 1, 0),\\
		L_w(3) &= (\textbf{5}, 5, 2, 1, 1, 1, 0).\qedhere
		\end{align*}
	\end{example}
	We now prove that the monomials $\bm{x}^{L_w(k)}$ appear with nonzero coefficient in $\mathfrak{G}_w$. We illustrate the key idea in the following example.
	
	\begin{example}
		\label{exp:5721463intchainsandleads}
		Continuing Examples \ref{exp:5721463intchains} and \ref{exp:5721463leads}, let $w=5721463$. We mark each interpolating chain $I^k(w)$ by
		\begin{itemize}
			\item completely marking the initial string of greedy links plus the first nested link, and
			\item minimally marking the remaining links.
		\end{itemize}
		Indicating markings with overlines and greedy/nested with red/blue respectively, we obtain the chains and exponents shown in Figure \ref{fig:markedchainexponents}.
	\end{example}

	\begin{figure}[ht]
		\begin{center}
			\includegraphics{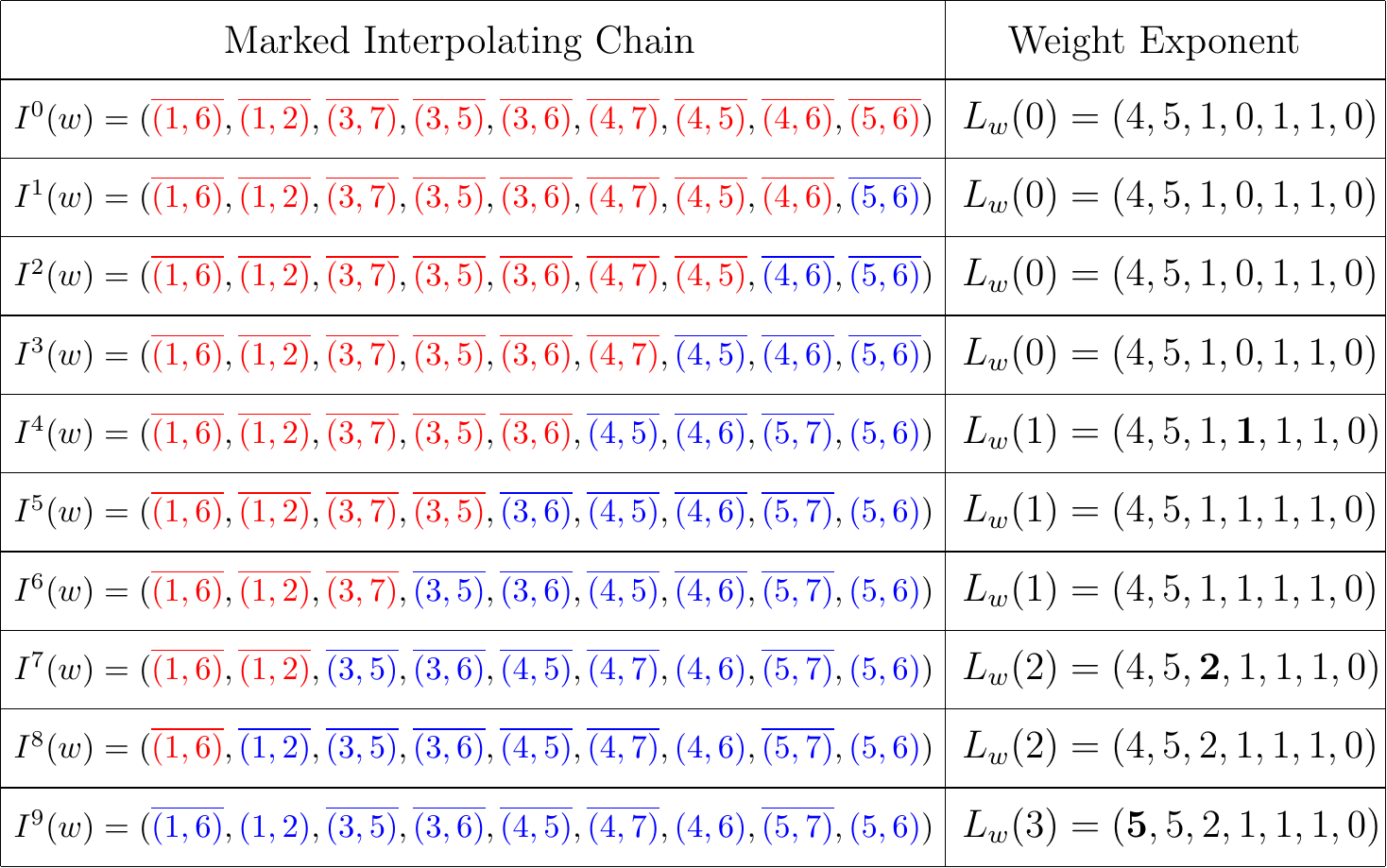}
		\end{center}
		\caption{}
		\label{fig:markedchainexponents}
	\end{figure}
	
	\begin{lemma}
		\label{lem:greedyrajchange}
		Fix $w\in S_n$. Let $\greedy(w)=(i,j)$ and $v=wt_{ij}$. Then either
		\[\orr(w) = \orr(v) \quad\mbox{or}\quad \orr(w) = \orr(v)+e_i.\]
		Equivalently, either $\rr(w) = \rr(v)$ or $\rr(w) = \rr(v)-e_i$.
	\end{lemma}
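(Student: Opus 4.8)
The plan is to compare $\orr(w)$ and $\orr(v)$ coordinate by coordinate, exploiting the special shape of the greedy pair. Write $\greedy(w)=(i,j)$, so $i=\min\{k\mid w(k)\neq n+1-k\}$ and $j=w^{-1}(w(i)+1)$; in particular $i<j$ and $w(j)=w(i)+1$, and since no value lies strictly between $w(i)$ and $w(i)+1$, Lemma~\ref{lem:cover} shows $w\lessdot v$ automatically (so $v$ differs from $w$ only in that $v(i)=w(i)+1$ and $v(j)=w(i)$). I will prove that $\orr(w)_p=\orr(v)_p$ for every $p\neq i$ and that $\orr(v)_i\le\orr(w)_i\le\orr(v)_i+1$; granting this, $\orr(w)=\orr(v)$ or $\orr(w)=\orr(v)+e_i$, and the $\rr$-reformulation is immediate from $\rr(w)_k=n-k-\orr(w)_k$.

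Since $\CG(w)$ is a climbing chain of $w$ (Lemma~\ref{lem:greedyischain}) whose first link is $(i,j)$, Lemma~\ref{lem:drecursionanychain} already gives $\orr(w)_p=\orr(v)_p=0$ for $p<i$, the inequality $\orr(w)_i\ge\orr(v)_i$, and $\orr(w)_p\le\orr(v)_p$ for $p>i$. So it remains to show (a) $\orr(w)_p\ge\orr(v)_p$ for each $p$ with $i<p\le n$, and (b) $\orr(w)_i\le\orr(v)_i+1$.

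For (a), fix $p$ with $i<p\le n$ and take $\alpha=(\alpha_1,\ldots,\alpha_r)\in\LIS(v,p)$; note $\alpha$ uses only positions $\ge p>i$. If $j$ is not among the $\alpha_t$, then $w$ and $v$ agree on all positions used by $\alpha$, so $\alpha$ is an increasing subsequence of $w$ from $p$ and $\orr(w)_p\ge r-1=\orr(v)_p$. Otherwise $\alpha_s=j$ for some $s$. If $s>1$, then $\alpha_{s-1}$ satisfies $p\le\alpha_{s-1}<j$, so $\alpha_{s-1}\neq i,j$ and $v(\alpha_{s-1})<v(j)=w(i)$ reads $w(\alpha_{s-1})<w(i)<w(j)$. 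If $s<r$, then $\alpha_{s+1}>j$, so $\alpha_{s+1}\neq i,j$ and $v(\alpha_{s+1})>v(j)=w(i)$ gives $w(\alpha_{s+1})>w(i)$; since $\alpha_{s+1}\neq j$ this forces $w(\alpha_{s+1})>w(i)+1=w(j)$. Hence reading off the $w$-values at the positions $\alpha_1,\ldots,\alpha_r$ (which differ from the $v$-values only at position $\alpha_s=j$, where $w(j)=w(i)+1$) still yields a strictly increasing sequence, so $\alpha$ is an increasing subsequence of $w$ from $p$ and again $\orr(w)_p\ge\orr(v)_p$. This is the step I expect to be the crux: it is precisely where the consecutiveness $w(j)=w(i)+1$ is needed, guaranteeing that no term of $\alpha$ can be ``trapped'' between $w(i)$ and $w(j)$ when $v(j)$ is replaced by $w(j)$.

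For (b), take $\gamma=(\gamma_1,\ldots,\gamma_r)\in\LIS(w,i)$, so $\gamma_1=i$ and $r-1=\orr(w)_i$; since $(i,j)$ is increasing in $w$ we have $r\ge 2$. If $\gamma_2=j$, then for $t\ge 3$ we have $\gamma_t>j$, so $v(\gamma_t)=w(\gamma_t)$, and $v(i)=w(i)+1=w(j)<w(\gamma_3)$ (when $r\ge 3$); thus $(i,\gamma_3,\ldots,\gamma_r)$ is an increasing subsequence of $v$ from $i$ of length $r-1$, giving $\orr(v)_i\ge r-2=\orr(w)_i-1$. If $\gamma_2\neq j$, then $w(\gamma_2)>w(i)$ and $w(\gamma_2)\neq w(i)+1$ force $w(\gamma_2)>w(i)+1$; moreover no $\gamma_t$ with $t\ge 2$ can equal $j$, since that would force $w(\gamma_t)=w(i)+1<w(\gamma_2)\le w(\gamma_t)$. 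Hence $\gamma$ avoids $j$, and as $v(i)=w(i)+1<w(\gamma_2)$, $\gamma$ itself is an increasing subsequence of $v$ from $i$, giving $\orr(v)_i\ge r-1=\orr(w)_i$. In either case $\orr(w)_i\le\orr(v)_i+1$, which completes the argument.
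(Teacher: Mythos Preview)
Your proof is correct and follows essentially the same approach as the paper: both exploit that $v$ is obtained from $w$ by swapping the consecutive values $w(i)$ and $w(i)+1$, so no entry can be ``trapped'' between them. The paper's proof is a one-line sketch (``it is straightforward to check that $\orr(w)=\orr(v)$ unless every $\alpha\in\LIS(w,i)$ has $\alpha_2=j$''), whereas you have fully written out the coordinate-by-coordinate comparison and invoked Lemmas~\ref{lem:greedyischain} and~\ref{lem:drecursionanychain} to handle half of the inequalities; your argument is simply a careful expansion of what the paper leaves implicit.
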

	\begin{proof}
		By definition, $i=\min\{k\mid w(k)\neq n+1-k\}$, and $j=w^{-1}(w(i)+1)$. Thus $v$ is obtained from $w$ by swapping the numbers $w(i)$ and $w(i)+1$. It is straightforward to check that $\orr(w)=\orr(v)$ unless every sequence $\alpha\in\LIS(w,i)$ has $\alpha_2=j$. In this case, $\orr(w)=\orr(v)+e_i$.
	\end{proof}
	
	\begin{lemma}
		\label{lem:greedyleadsjchange}
		Fix $w\in S_n$. Let $\greedy(w)=(i,j)$ and $v=wt_{ij}$. If $\leads(v)=(L_w(0),\ldots,L_w(d))$, then
		\[\leads(w) = 
		\begin{cases}
		(L_w(0)-e_i,\ldots,L_w(d)-e_i,\,\rr(w)) &\mbox{if } \rr(w) = \rr(v),\\
		(L_w(0)-e_i,\ldots,L_w(d)-e_i) &\mbox{if } \rr(w) = \rr(v)-e_i.
		\end{cases} 
		\] 
	\end{lemma}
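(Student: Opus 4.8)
The plan is to track the running vectors of $\leads(v)$ and $\leads(w)$ in parallel, using that $\leads$ of a permutation is completely determined by its Lehmer code, its Rajchgot code, and the deterministic ``increment the largest deficient coordinate'' rule in the definition. First I would fix the two length parameters. By Lemma \ref{lem:crecursion}, $\cc(w) = \cc(v) - e_i$, so $\absinvcode{w} = \absinvcode{v} - 1$; combining this with $\absrajcode{w} = \absrajcode{v}$ in the first case and $\absrajcode{w} = \absrajcode{v} - 1$ in the second (Lemma \ref{lem:greedyrajchange}), the parameter $d_w := \absrajcode{w} - \absinvcode{w}$ equals $d+1$ in the first case and $d$ in the second, so $\leads(w)$ has exactly the claimed number of terms in either case.

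Next I would isolate the single use of the hypothesis $\greedy(w) = (i,j)$. Since $i = \min\{k : w(k) \neq n+1-k\}$ and $v = wt_{ij}$ agrees with $w$ at every position $<i$ (recall $i<j$ for any link of a climbing chain, Lemma \ref{lem:greedyischain}), one has $v(q) = w(q) = n+1-q$ for all $q < i$, which forces $\cc(v)_q = \rr(v)_q = n-q$ for every $q < i$. Hence throughout the construction of $\leads(v)$ the set $S_k := \{p : L_w(k)_p < \rr(v)_p\}$ of deficient coordinates is contained in $\{i, i+1, \ldots, n\}$; in particular $\max S_k \geq i$ whenever $S_k \neq \emptyset$, i.e.\ whenever $k < d$. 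I would also record the standard fact that $\leads(v)$ never exceeds $\rr(v)$ coordinatewise, so $L_w(k)_i \leq \rr(v)_i$ for all $k$.

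Then I would prove by induction on $k$ that the $k$th term of $\leads(w)$ equals $L_w(k) - e_i$ for $0 \leq k \leq d$, the base case being $\cc(w) = \cc(v) - e_i = L_w(0)-e_i$. For the inductive step I would compute the deficient set of $\leads(w)$ at the vector $L_w(k) - e_i$. In the second case ($\rr(w) = \rr(v) - e_i$) this set is exactly $S_k$, because subtracting $e_i$ from both $L_w(k)$ and $\rr(v)$ leaves unchanged which coordinates are deficient; thus $\leads(w)$ takes the same increment as $\leads(v)$, the induction closes, and the sequence terminates at its $d$th term $L_w(d)-e_i = \rr(v)-e_i = \rr(w)$. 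In the first case ($\rr(w) = \rr(v)$) the deficient set of $\leads(w)$ at $L_w(k)-e_i$ is $S_k \cup \{i\}$ — coordinate $i$ is always deficient there, since $L_w(k)_i - 1 < L_w(k)_i \leq \rr(v)_i$ — and by the previous step $\max(S_k \cup \{i\}) = \max S_k$ for $k<d$, so $\leads(w)$ again follows $\leads(v)$ step for step; once it reaches its $d$th term $L_w(d)-e_i = \rr(w)-e_i$, the only remaining deficient coordinate is $i$, so one final increment produces $\rr(w)$ and the sequence stops, yielding $(L_w(0)-e_i,\ldots,L_w(d)-e_i,\rr(w))$.

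The bookkeeping with $d_w$ and with the two possible ways $\rr$ can change is routine; the one genuinely delicate point is, in the first case, that enlarging the deficient set by the index $i$ must not alter which coordinate gets incremented. That is precisely what the containment $S_k \subseteq \{i,\ldots,n\}$ secures, and it is the only place where the greedy nature of the link $(i,j)$ is used.
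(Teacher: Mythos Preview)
Your proof is correct and follows the same approach as the paper, which invokes exactly the two inputs you use: Lemma~\ref{lem:crecursion} for $\cc(w)=\cc(v)-e_i$ and Lemma~\ref{lem:greedyrajchange} for the dichotomy on $\rr$. The paper's own proof is a single sentence leaving the verification to the reader; you have supplied that verification in full, including the essential observation that the deficient set $S_k$ lies in $\{i,\ldots,n\}$, which is what keeps the increment rules for $\leads(w)$ and $\leads(v)$ synchronized.

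One small remark on your closing commentary: the containment $S_k\subseteq\{i,\ldots,n\}$ uses only that $i$ is the first position where $w$ differs from $w_0$, not the particular choice of $j$. The specifically greedy choice of $j$ enters separately, through Lemma~\ref{lem:crecursion} (to get $\cc(w)=\cc(v)-e_i$) and Lemma~\ref{lem:greedyrajchange}. This does not affect the correctness of your argument, since you cite both lemmas; it only makes the phrase ``the only place where the greedy nature of the link $(i,j)$ is used'' slightly imprecise.
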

	\begin{proof}
		The lemma follows from Lemma \ref{lem:greedyrajchange} and $\cc(w)=\cc(v)-e_i$.	
	\end{proof}
	
	\begin{theorem}
		\label{thm:interpolatingchainsexponents}
		Fix $w\in S_n$ and let $m$ be the length of any climbing chain of $w$. Let
		\[\leads(w)=(L_w(0),\ldots,L_w(d)).\] 
		For $0\leq p\leq m$, let $\xi_w(p)=(I^p(w),U^p(w))$ where $U^0(w) = I^0(w)$, and
		\[U^p(w) = M(I^p(w))\cup \{I^p(w)_1,I^p(w)_2,\ldots,I^p(w)_{m-p+1} \}\mbox{ for } p\in [m].\]
		Then 
		\[\left\{{L_w(0)},\ldots, {L_w(d)}\right\} = \left\{\wt({\xi_w(0)}),\ldots,\wt({\xi_w(m)}) \right\}. \]
	\end{theorem}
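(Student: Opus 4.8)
The plan is to prove the set equality by an induction on $m = \ell(w_0) - \ell(w)$, peeling off the first (greedy) link. Let $\greedy(w) = (i,j)$ and $v = wt_{ij}$. By definition of the interpolating chains, for $p \in \{0,1,\ldots,m-1\}$ the chain $I^p(w)$ is obtained by prepending the greedy link $(i,j)$ to $I^p(v)$, and the maximal climbing chain length of $v$ is $m-1$. The one exception is $p = m$: here $\CN(w) = I^m(w)$ is obtained by prepending $\nested(w) = (i,j')$ (which equals $(i,j)$ only when the greedy and nested pairs coincide), but in all cases $I^m(w)$ is still gotten by prepending a link $(i,\star)$ to $I^{m-1}(v)$. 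So up to reindexing, the interpolating chains of $w$ are the interpolating chains of $v$ each with one extra leading link of the form $(i,\star)$.

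First I would pin down how the markings $U^p(w)$ relate to $U^{p}(v)$ under prepending this link. For $p < m$, the leading link of $I^p(w)$ falls inside the completely-marked initial greedy block, and by Lemma~\ref{lem:markingrecursion} (applied with $C_1$ the prepended greedy link, which is necessarily a minimal marking since it is the very first link) we get $\dwt(\xi_w(p)) = \dwt(\xi_v(p)) + e_i$, hence $\wt(\xi_w(p)) = \wt(\xi_v(p)) - e_i$ after accounting for the shift $n-k$ in the definition of $\wt$ versus $\dwt$ — more precisely, since both weights live in $S_n$ with the same $n$, I should instead track things directly: prepending a marked link $(i,\star)$ increases $\dwt_i$ by $1$, so $\wt_i$ decreases by $1$, and all other coordinates are unchanged. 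Thus $\{\wt(\xi_w(0)),\ldots,\wt(\xi_w(m-1))\} = \{\wt(\xi_v(0)) - e_i,\ldots,\wt(\xi_v(m-1)) - e_i\}$. By induction $\{\wt(\xi_v(0)),\ldots,\wt(\xi_v(m-1))\} = \{L_v(0),\ldots,L_v(d')\}$ where $d' = \absrajcode{v} - \absinvcode{v}$, and Lemma~\ref{lem:greedyleadsjchange} tells us precisely that $\leads(w)$ equals $(L_v(0) - e_i, \ldots, L_v(d') - e_i)$ possibly with $\rr(w)$ appended. This handles the first $m$ chains; the remaining chain $\xi_w(m) = (\CN(w), M(\CN(w)))$ contributes $\wt(\xi_w(m)) = \rr(w)$ by Theorem~\ref{thm:strongequality}.

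So the induction closes if I verify two bookkeeping facts: (a) when $\rr(w) = \rr(v) - e_i$ (the case where the greedy step strictly increases $\absoverrajcode{}$), the "extra" chain $\xi_w(m)$ produces $\rr(w)$, which already lies in the shifted set $\{L_v(k) - e_i\}$ — indeed $L_v(d') = \rr(v)$ so $L_v(d') - e_i = \rr(w)$ — so no new vector appears and $d = d'$ consistently; and (b) when $\rr(w) = \rr(v)$, then $\rr(w) = \rr(v) = L_v(d')$ does \emph{not} appear among the $L_v(k) - e_i$ (since $\cc(w) = \cc(v) - e_i$ pulls every vector down in coordinate $i$), and $d = d' + 1$, with the appended $L_w(d) = \rr(w)$ exactly matching $\wt(\xi_w(m))$. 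I also need to check the base case $w = w_0$, which is trivial ($m = 0$, $\leads(w_0) = (\mathbf{0})$, the only chain empty with weight the zero vector after the standard shift — or rather $\cc(w_0) = \rr(w_0)$ and both sides are singletons). The main obstacle will be step (a)/(b): making airtight the claim that the multiset on the right has exactly the right cardinality, i.e.\ that the $m$ shifted vectors $\wt(\xi_v(p)) - e_i$ together with $\wt(\xi_w(m))$ collapse to a set of size $d+1$ with no unexpected coincidences or omissions. This requires knowing that consecutive $L_w(k)$ are distinct (clear, they differ by a standard basis vector) and that the correspondence $p \mapsto \wt(\xi_w(p))$ is compatible with the ordering built into $\leads$; I would extract this from Lemma~\ref{lem:greedyrajchange} controlling exactly which coordinate changes at each greedy step, matching it against the "largest index $j$ with $L_w(i-1)_j < \rr(w)_j$" rule defining $\leads$.
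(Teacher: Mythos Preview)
Your approach is correct and matches the paper's proof: induct on $m$ by peeling off the first greedy link, relate $\xi_w(p)$ to $\xi_v(p)$ for $p<m$ by prepending a marked link, use Lemma~\ref{lem:greedyleadsjchange} to track $\leads$, and handle $\xi_w(m)$ separately via Theorem~\ref{thm:strongequality}. One minor correction: your claim that $I^m(w)$ is obtained by prepending a link $(i,\star)$ to $I^{m-1}(v)$ is false when $\nested(w)\neq\greedy(w)$ (the nested chain of $w$ truncates to $\CN(wt_{\nested(w)})$, not $\CN(v)$), but since you treat $\xi_w(m)$ directly via its weight $\rr(w)$ rather than through this prepending, the argument is unaffected.
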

	\begin{proof}
		We work by induction on $\ell(w)$. When $w=w_0$ we have $d=m=0$ and $\xi_w(0) = (\emptyset,\emptyset)$. Thus
		\[\left\{ {L_w(0)}\right\} = \left\{(n-1,n-2,\ldots,1,0)\right\} = \left\{\wt({\xi(0)})\right\}. \] 
		
		Fix $w\in S_n$ and suppose the theorem holds for all permutations $v$ with $\ell(v)>\ell(w)$. Set $\greedy(w)=(i,j)$ and $v=wt_{ij}$. By definition, $\cc(w)=\cc(v)-e_i$. By Lemma \ref{lem:greedyrajchange}, either $\rr(w) = \rr(v)$ or $\rr(w) = \rr(v) - e_i$. 
		
		Suppose first that $\rr(w) = \rr(v)$. By Lemma \ref{lem:greedyleadsjchange}, $\leads(w)$ is obtained by subtracting $e_i$ from each element of $\leads(v)$, then appending $\rr(w)$.
		Thus,
		\[\left\{{L_w(0)},\ldots, {L_w(d)}\right\} = \left\{{L_v(0)}-e_i,\ldots, {L_v(d-1)}-e_i, \rr(w)\right\}.\]
		
		On the other hand, prepending and marking $(i,j)$ to all of the interpolating chains 
		$\xi_v(0),\ldots,\xi_v(m-1)$ of $v$ yields $\xi_w(0),\ldots, \xi_w(m-1)$. The final chain $\xi_w(m)$ is simply the nested chain of $w$ with its minimal markings. Thus
		\[\left\{\wt({\xi_w(0)}),\ldots,\wt({\xi_w(m)}) \right\} = \left\{\wt({\xi_v(0)})-e_i,\ldots,\wt({\xi_v(m-1)})-e_i,\rr(w) \right\}.\]
		Applying the induction assumption to $v$ completes the proof.
		
		The proof in the case that $\rr(w) = \rr(v) - e_i$ is almost identical. It is still true that
		\[\left\{\wt({\xi_w(0)}),\ldots,\wt({\xi_w(m)}) \right\} = \left\{\wt({\xi_v(0)})-e_i,\ldots,\wt({\xi_v(m-1)})-e_i,\rr(w) \right\}.\]
		However, Lemma \ref{lem:greedyleadsjchange} shows one instead obtains
		\[\left\{{L_w(0)},\ldots, {L_w(d)}\right\} = \left\{{L_v(0)}-e_i,\ldots, {L_v(d)}-e_i\right\}.\]
		In this case, one completes the proof by noting $\rr(w) =\rr(v)-e_i= \wt({\xi_v(m-1)})-e_i$.
	\end{proof}
	
	\begin{corollary} 
		\label{cor:exist} 
		The monomials appearing in Conjecture \ref{conj:elena} lie in the support of $\mathfrak{G}_w$.
	\end{corollary}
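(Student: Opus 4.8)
The plan is to recognize the monomials occurring in Conjecture~\ref{conj:elena} as exactly the monomials $\bm{x}^{L_w(0)},\dots,\bm{x}^{L_w(d)}$ attached to $\leads(w)$, to exhibit each of them as the weight of a marked climbing chain of $w$ using Theorem~\ref{thm:interpolatingchainsexponents}, and then to rule out cancellation in the expansion of Theorem~\ref{thm:grothformula} by a short sign count.

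First I would unwind the recursion in Conjecture~\ref{conj:elena}. Taking $m_{\ell(w)}=\bm{x}^{\cc(w)}$ for the leading monomial of the lowest-degree component $\mathfrak{S}_w$, the rule ``$m_k=x_p m_{k-1}$ with $p$ largest such that $x_p m_{k-1}$ divides $\bm{x}^{\rr(w)}$'' is literally the recursion defining $\leads(w)$. Indeed, since $\cc(w)\le \rr(w)$ coordinatewise and the recursion only ever increments a coordinate that is still strictly below $\rr(w)$, one has $L_w(i)\le \rr(w)$ coordinatewise for all $i$ (with $L_w(d)=\rr(w)$), so $x_p\bm{x}^{L_w(i-1)}$ divides $\bm{x}^{\rr(w)}$ if and only if $L_w(i-1)_p<\rr(w)_p$. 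Hence the monomials appearing in Conjecture~\ref{conj:elena} are precisely $\bm{x}^{L_w(0)},\dots,\bm{x}^{L_w(d)}$, and $\bm{x}^{L_w(i)}$ is homogeneous of degree $\ell(w)+i$.

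Next, Theorem~\ref{thm:interpolatingchainsexponents} shows that each of $L_w(0),\dots,L_w(d)$ equals $\wt(\xi_w(p))$ for a marked climbing chain $\xi_w(p)=(I^p(w),U^p(w))$ of $w$, so each monomial $\bm{x}^{L_w(i)}$ occurs in the expansion of $\mathfrak{G}_w$ given by Theorem~\ref{thm:grothformula}. It remains to see the monomial survives, and here is the one extra ingredient. For any marked climbing chain $\xi=(C,U)$ of $w$ one has $|\wt(\xi)|=\sum_{k}(n-k)-|\dwt(\xi)|=\ell(w_0)-\#U$, while every climbing chain of $w$ has the same length $\ell(C)=\ell(w_0)-\ell(w)$; therefore $\mathrm{sign}(\xi)=(-1)^{\ell(C)-\#U}=(-1)^{|\wt(\xi)|-\ell(w)}$ depends only on the total degree $|\wt(\xi)|$. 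Consequently, in Theorem~\ref{thm:grothformula} all marked chains whose weight has a fixed total degree contribute with one and the same sign, so no cancellation can occur within a homogeneous component: a monomial $\bm{x}^{\alpha}$ lies in the support of $\mathfrak{G}_w$ if and only if $\alpha=\wt(\xi)$ for some marked climbing chain $\xi$ of $w$. Applying this with $\alpha=L_w(i)$ and $\xi=\xi_w(p)$ completes the argument.

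I do not expect a genuine obstacle: the substantive work is already contained in Theorem~\ref{thm:interpolatingchainsexponents}, and the sign-uniformity observation is elementary. The one point deserving a word of care is the base case $m_{\ell(w)}=\bm{x}^{\cc(w)}$, i.e.\ that $\bm{x}^{\cc(w)}$ is the leading monomial of $\mathfrak{S}_w$; but for mere membership in the support of $\mathfrak{G}_w$ this is already immediate from Theorem~\ref{thm:lehmerequality}, and the sharper ``leading monomial'' refinement across all degrees is exactly what is proved afterward in Section~\ref{sec:lead}.
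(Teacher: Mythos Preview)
Your proposal is correct and follows the paper's approach: the paper states the corollary immediately after Theorem~\ref{thm:interpolatingchainsexponents} with no proof, taking it as an immediate consequence of that theorem together with Theorem~\ref{thm:grothformula}. Your write-up is in fact more careful than the paper's, since you explicitly verify the sign-uniformity identity $\mathrm{sign}(\xi)=(-1)^{|\wt(\xi)|-\ell(w)}$ to rule out cancellation within a homogeneous component; the paper leaves this (well-known) fact implicit.
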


	\section{Proof of Conjecture \ref{conj:elena}}
	\label{sec:lead}
	
	In Corollary \ref{cor:exist} we showed that the monomials named in Conjecture \ref{conj:elena} are in the support of $\mathfrak{G}_w$. We show that these monomials are actually the leading monomials of the homogeneous components of $\mathfrak{G}_w$.

	Recall that when $\alpha\in\mathbb{R}^n$, $\overline{\alpha}$ is the vector with $\overline{\alpha}_k=n-k-\alpha_k$ for each $k\in[n]$.
	
	\begin{lemma}
		\label{lem:maxmarkedones}
		Let $(C,U)$ be any marked climbing chain of $w\in S_n$ and $C_1=(i_1,j_1)$. Then 
		\[\dwt(C,U)_{i_1} \leq \occ(w)_{i_1}.\]
		If equality holds, then 
		\[(C_1,C_2,\ldots,C_k) = (\CG_1,\ldots,\CG_k), \]
		where $C_k$ is the last link in $C$ of the form $(i_1,\star)$.
	\end{lemma}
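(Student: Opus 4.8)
The plan is to bound $k$, the number of links of $C$ of the form $(i_1,\star)$, and then to characterize when that bound is attained. Since the first coordinates of a climbing chain satisfy $i_1\le i_2\le\cdots$, these links form the initial segment $C_1,\ldots,C_k$ of $C$ (so $C_k$ is indeed the last link of the form $(i_1,\star)$), while every later link $C_p$ has $i_p>i_1$ and hence $j_p>i_p>i_1$. Thus no link of $C$ touches position $i_1$ after step $k$, so $w^{(k)}(i_1)=w_0(i_1)=n+1-i_1$; the same observation (no link touches a position below $i_1$) re-derives the standard fact that $i_1=\min\{q:w(q)\ne n+1-q\}$, which I will use below. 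Finally, $\dwt(C,U)_{i_1}$ counts the $(i_1,\star)$-links lying in $U\subseteq C$, so $\dwt(C,U)_{i_1}\le k$ and it suffices to bound $k$.

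Next I would track the entry in row $i_1$ as $C$ progresses. Each $C_p=(i_1,j_p)$ with $p\le k$ produces a Bruhat cover, so by Lemma~\ref{lem:cover} one has $w^{(p-1)}(i_1)<w^{(p-1)}(j_p)=w^{(p)}(i_1)$; hence
\[
w(i_1)=w^{(0)}(i_1)<w^{(1)}(i_1)<\cdots<w^{(k)}(i_1)=n+1-i_1
\]
is a strictly increasing sequence of integers, forcing $k\le (n+1-i_1)-w(i_1)$. The point that makes this crude pigeonhole sharp is that $i_1$ is the first disagreement with $w_0$: positions $1,\ldots,i_1-1$ carry the values $n,n-1,\ldots,n-i_1+2$, so every value smaller than $w(i_1)$ occupies a position $>i_1$, whence $\cc(w)_{i_1}=w(i_1)-1$ and
\[
\dwt(C,U)_{i_1}\le k\le (n+1-i_1)-w(i_1)=n-i_1-\cc(w)_{i_1}=\occ(w)_{i_1}.
\]
This is the asserted inequality.

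For the equality case, $\dwt(C,U)_{i_1}=\occ(w)_{i_1}$ forces equality throughout; in particular $k=(n+1-i_1)-w(i_1)$, so the strictly increasing sequence above exhausts $w(i_1),w(i_1)+1,\ldots,n+1-i_1$, i.e.\ $w^{(p)}(i_1)=w(i_1)+p$ for $0\le p\le k$. Thus each link $C_p$ with $p\le k$ advances the entry in row $i_1$ to the next integer, so $C_p=(i_1,j_p)$ with $j_p=(w^{(p-1)})^{-1}\!\big(w^{(p-1)}(i_1)+1\big)$. Since $i_1$ remains the minimal disagreement of $w^{(p-1)}$ with $w_0$ (positions below $i_1$ are untouched and $w^{(p-1)}(i_1)=w(i_1)+p-1<n+1-i_1$ for $p\le k$), the definition of the greedy pair gives $\greedy(w^{(p-1)})=(i_1,j_p)=C_p$. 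A short induction on $p$, noting that the partial products of $C$ and of $\CG(w)$ then agree through step $k$, yields $(C_1,\ldots,C_k)=(\CG_1,\ldots,\CG_k)$.

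I do not anticipate a serious obstacle: the whole argument is bookkeeping around a single pigeonhole on the entry in row $i_1$. The only spots meriting care are the claim that position $i_1$ is untouched after $C_k$ and therefore reaches its terminal value $n+1-i_1$, and, in the equality case, verifying at each intermediate step that $i_1$ is still the minimal disagreement position so that the forced move is literally the greedy move.
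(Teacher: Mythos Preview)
Your proposal is correct and follows essentially the same idea as the paper: bound the number of $(i_1,\star)$-links by a pigeonhole on the noninversions at $i_1$, then analyze the equality case. The paper phrases the bound as ``every $(i_1,\star)$-link is a noninversion of $w$ and there are exactly $\occ(w)_{i_1}$ of these,'' whereas you phrase it as ``the value $w^{(p)}(i_1)$ strictly climbs from $w(i_1)$ to $n+1-i_1$''; these are two sides of the same coin once one notes (as you do) that $\cc(w)_{i_1}=w(i_1)-1$ because positions below $i_1$ already hold the top values. Your treatment of the equality case is more explicit than the paper's one-line appeal to the ``nested hook interpretation'': you show directly that equality forces each step to raise $w^{(p)}(i_1)$ by exactly one, hence each link is literally the greedy link. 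One tiny point you leave implicit: to conclude $i_1=\min\{q:w(q)\ne n+1-q\}$ (not just $\le$) you need $w(i_1)\ne n+1-i_1$, which follows since $k\ge 1$.
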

	\begin{proof}
		From the definition of a climbing chain, we see that all links of the form $(i_1,\star)$ in $C$ are noninversions of $w$, pairs $(a,b)$ with $a<b$ and $w(a)<w(b)$. There are exactly $n-i_1-\cc(w)_{i_1} = \occ(w)_{i_1}$ of these noninversions with $a=i_1$. The second assertion follows from the nested hook interpretation of climbing chain links.
	\end{proof}

	\begin{lemma}
		\label{lem:onemarkbasecase}
		Let $(C,U)$ be a marked climbing chain of $w\in S_n$ with $C$ given by
		\[C: w=w^{(0)} \xrightarrow{C_1=(i_1,j_1)} w^{(1)} \xrightarrow{C_2=(i_2,j_2)} w^{(2)}\xrightarrow{C_3=(i_3,j_3)}\cdots\xrightarrow{C_m=(i_m,j_m)} w^{(m)} = w_0.\]
		Then
		\[\dwt(C,U)_{i_1} \leq \LL_w(m-\#U)_{i_1}. \]
	\end{lemma}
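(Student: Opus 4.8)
The plan is to break the asserted inequality into two simpler ones. First I would unwind the definition of $\leads(w)$: the vectors $L_w(0)=\cc(w),\ldots,L_w(d)=\rr(w)$ raise the entries of $\cc(w)$ to those of $\rr(w)$ greedily from the right, and since $\rr(w)_j=\cc(w)_j$ for $j<i_1$ one has $\sum_{j>i_1}\bigl(\rr(w)_j-\cc(w)_j\bigr)=d-\bigl(\rr(w)_{i_1}-\cc(w)_{i_1}\bigr)$. Combining this with the identity $d=m-\absoverrajcode{w}$ (which follows from $\rr(w)_k+\orr(w)_k=n-k$, giving $\absrajcode{w}+\absoverrajcode{w}=\binom n2=\ell(w_0)=m+\ell(w)$), a direct computation shows
\[
\LL_w(m-\#U)_{i_1}=\min\bigl\{\,\occ(w)_{i_1},\ \orr(w)_{i_1}+\#U-\absoverrajcode{w}\,\bigr\}
\]
(and $m-\#U\le m-\#M(C)\le m-\absoverrajcode{w}=d$, so $L_w(m-\#U)$ is defined). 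Hence it suffices to prove the two bounds $\dwt(C,U)_{i_1}\le\occ(w)_{i_1}$ and $\dwt(C,U)_{i_1}\le\orr(w)_{i_1}+\#U-\absoverrajcode{w}$. The first is exactly Lemma~\ref{lem:maxmarkedones}.

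For the second I would first reduce to minimal markings. With $V=U\setminus M(C)$, there are at most $\#V=\#U-\#M(C)$ links of $V$ of the form $(i_1,\star)$, so $\dwt(C,U)_{i_1}\le\dwt(C,M(C))_{i_1}+\#U-\#M(C)$, and it is enough to show $\dwt(C,M(C))_{i_1}\le\orr(w)_{i_1}+\#M(C)-\absoverrajcode{w}$. Let $C_1,\ldots,C_a$ be the links of $C$ of the form $(i_1,\star)$; because the first coordinates along a climbing chain weakly increase and $i_1$ is the first position in which $w$ differs from $w_0$, these are precisely the first $a$ links of $C$. The truncation $C'=(C_{a+1},\ldots,C_m)$ is a climbing chain of $u:=w^{(a)}$, and by the local description of $M(\cdot)$ its minimal markings are exactly $M(C)\setminus\{C_1,\ldots,C_a\}$. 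Therefore $\#M(C)-\dwt(C,M(C))_{i_1}=\#M(C')$, and the desired inequality rearranges to $\absoverrajcode{w}-\orr(w)_{i_1}\le\#M(C')$.

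To finish, observe that $i_1$ is also the first position where $u$ differs from $w_0$, so $\orr(w)_j=\orr(u)_j=0$ for $j<i_1$ and $\orr(u)_{i_1}=0$; hence $\absoverrajcode{w}-\orr(w)_{i_1}=\sum_{j>i_1}\orr(w)_j$ and $\absoverrajcode{u}=\sum_{j>i_1}\orr(u)_j$. Theorem~\ref{prop:minnummarks} gives $\#M(C')\ge\absoverrajcode{u}$, so it remains only to check $\sum_{j>i_1}\orr(w)_j\le\sum_{j>i_1}\orr(u)_j$. This comes from applying Lemma~\ref{lem:drecursionanychain}(iii) to each step $w^{(p-1)}\xrightarrow{C_p}w^{(p)}$ for $1\le p\le a$: since $C_p$ has first coordinate $i_1$, the lemma gives $\orr(w^{(p-1)})_j\le\orr(w^{(p)})_j$ for every $j>i_1$, and telescoping over $p=1,\ldots,a$ yields $\orr(w)_j\le\orr(u)_j$ for $j>i_1$.

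The hard part is the bookkeeping in the first paragraph — getting the complement arithmetic right and verifying the displayed formula for $\LL_w(m-\#U)_{i_1}$. After that the proof is a short chain of Lemma~\ref{lem:maxmarkedones}, Theorem~\ref{prop:minnummarks}, and the monotonicity of $\orr(\cdot)_j$ under row-$i_1$ moves furnished by Lemma~\ref{lem:drecursionanychain}.
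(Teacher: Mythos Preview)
Your proof is correct and follows essentially the same route as the paper: both arguments establish the two bounds $\dwt(C,U)_{i_1}\le\occ(w)_{i_1}$ (via Lemma~\ref{lem:maxmarkedones}) and $\dwt(C,U)_{i_1}\le\orr(w)_{i_1}+\#U-\absoverrajcode{w}$ (via truncation at the last $(i_1,\star)$ link, Theorem~\ref{prop:minnummarks}, and the monotonicity of $\orr(\cdot)_j$ for $j>i_1$ from Lemma~\ref{lem:drecursionanychain}/\ref{lem:heavytruncation}), and then identify $\LL_w(m-\#U)_{i_1}$ as the minimum of these two quantities. The only cosmetic differences are that you compute the formula for $\LL_w(m-\#U)_{i_1}$ explicitly at the outset (the paper does this analysis at the end), and you insert an intermediate reduction to minimal markings which is harmless but unnecessary---the paper's argument applies Theorem~\ref{prop:minnummarks} directly to the truncated marked chain $(C',U')$ with $U'=U\cap C'$, since $M(C')\subseteq U'$.
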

	\begin{proof}
		Suppose $C_k$ is the last link in $C$ of the form $(i_1,\star)$. 
		By Theorem \ref{prop:minnummarks},
		\[\#U = |\dwt(C,U)| = \dwt(C,U)_{i_1}+\sum_{p=i_1+1}^n\dwt(C,U)_p \geq \dwt(C,U)_{i_1} + \absoverrajcode{w^{(k)}}. \]
		On the other hand, Lemma \ref{lem:heavytruncation} implies 
		\[\absoverrajcode{w} \leq \orr(w)_{i_1} + \absoverrajcode{w^{(k)}},\]
		so that
		\[\#U = \absoverrajcode{w} + (\#U - \absoverrajcode{w})\leq \orr(w)_{i_1} + \absoverrajcode{w^{(k)}} + (\#U - \absoverrajcode{w}).\]
		Hence
		\[\dwt(C,U)_{i_1} \leq \orr(w)_{i_1} + (\#U - \absoverrajcode{w}).\]
		
		To conclude the proof, we will analyze $\orr(w)_{i_1} + (\#U - \absoverrajcode{w})$.
		Let $d=m-\#U$. See Figure \ref{fig:directions} for a visual representation of the following argument. Recall that $L_w(d)$ is obtained from $\cc(w)$ by iteratively increasing components $d$ times, moving right-to-left so entries stay weakly below the corresponding entries of $\rr(w)$. Then $\LL_w(d)$ is obtained from $\occ(w)$ by iteratively decreasing components $d$ times, moving right-to-left so entries stay weakly above the corresponding entries of $\orr(w)$.
		
		Equivalently, $\LL_w(d)$ is obtained from $\orr(w)$ by iteratively increasing components $m-\absoverrajcode{w}-d$ times, moving left-to-right so entries stay weakly below the corresponding entries of $\occ(w)$. Note that 
		\[m-\absoverrajcode{w}-d = \#U-\absoverrajcode{w}.\]
		If $\orr(w)_{i_1} + (\#U - \absoverrajcode{w})\geq \occ(w)_{i_1}$, then $\LL_w(d)_{i_1}=\occ(w)_{i_1}$ and Lemma \ref{lem:maxmarkedones} completes the proof. 
		
		Otherwise, $\orr(w)_{i_1} + (\#U - \absoverrajcode{w})< \occ(w)_{i_1}$, so 
		\[\LL_w(m-\#U)_{i_1} = \orr(w)_{i_1} + (\#U - \absoverrajcode{w}) \geq \dwt(C,U)_{i_1}\]
		as shown above, so we are done.
	\end{proof}

	\begin{figure}[ht]
		\begin{center}
			\includegraphics[scale=.8]{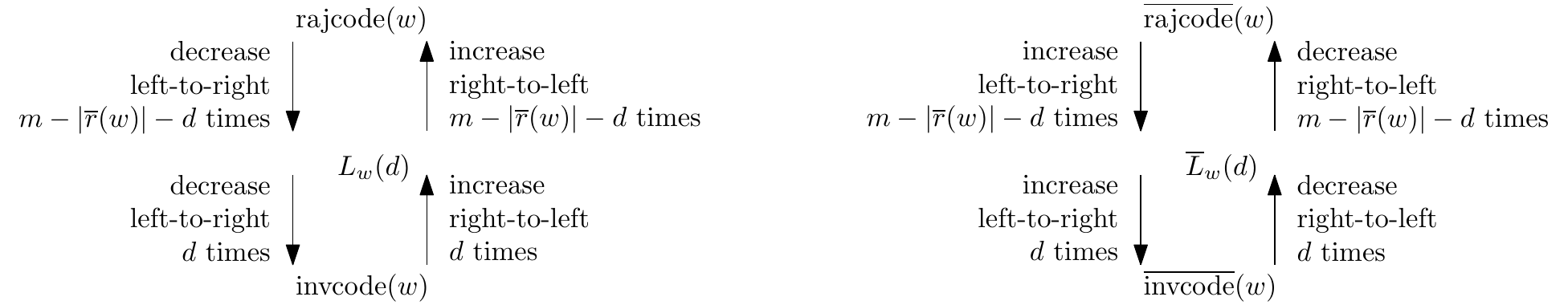}
			\caption{}
			\label{fig:directions}
		\end{center}
	\end{figure}

	\begin{theorem}
		\label{thm:interpolatingleadingterms}
		Fix $w\in S_n$. For each $d$ with $1\leq d\leq \absrajcode{w}-\absinvcode{w}$ and any term order satisfying $x_1<x_2<\cdots<x_n$,
		\[\bm{x}^{\LL_w(d)} = \min \left\{\bm{x}^{\dwt(C,U)} \mid (C,U) \mbox{ is a marked climbing chain of } w \mbox{ with } \#U = 
		\ell(C) - d \right\}. \] 
		Equivalently, 
		\[\bm{x}^{L_w(d)} = \max \left\{\bm{x}^{\wt(C,U)} \mid (C,U) \mbox{ is a marked climbing chain of } w \mbox{ with } \#U = 
		\ell(C) - d \right\}. \] 
	\end{theorem}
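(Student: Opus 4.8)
The plan is to mirror the proof of Theorem~\ref{thm:leadingchainweight}, which is exactly the case $d=\absrajcode{w}-\absinvcode{w}$ (heavy chains being precisely the marked chains $(C,U)$ with $\#U=\ell(C)-d$ for that value of $d$), now with Lemma~\ref{lem:onemarkbasecase} playing the role that Lemma~\ref{lem:heavymaxfirstmarks} plays there. The two displayed assertions are equivalent by the definition of $\wt$, so I work with the $\dwt$ form. The minimum is attained: by Theorem~\ref{thm:interpolatingchainsexponents} the marked interpolating chains realize every $L_w(k)$ as a weight, and since $|L_w(k)|=\absinvcode{w}+k$ the chain realizing $L_w(d)$ has $\#U=\binom{n}{2}-\absinvcode{w}-d=\ell(C)-d$; dualizing, that chain has $\dwt=\LL_w(d)$. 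This also covers $d=0$, which I include for free (there it is the greedy/Schubert case, Theorem~\ref{thm:lehmerequality}).

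For the lower bound I would prove, by induction on $\ell(w_0)-\ell(w)$, that every marked climbing chain $(C,U)$ of $w$ with $\#U=\ell(C)-d$ satisfies the prefix inequalities $\sum_{k\le j}\LL_w(d)_k\ge\sum_{k\le j}\dwt(C,U)_k$ for all $j\in[n]$; these say precisely that $\bm{x}^{\LL_w(d)}\le\bm{x}^{\dwt(C,U)}$ in every term order with $x_1<\cdots<x_n$, and together with attainment they give the theorem. Write $C_1=(i_1,j_1)$, let $k_0$ be the index of the last link of $C$ of the form $(i_1,\star)$ (so $i_1=\cdots=i_{k_0}$), put $v=w^{(k_0)}$, $C'=(C_{k_0+1},\ldots,C_m)$, and $U'=U\cap C'$. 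Routine checks give: $M(C')\subseteq U'$, so $(C',U')$ is a marked chain of $v$; $\dwt(C,U)_k=\dwt(C',U')_k=\LL_w(d)_k=\LL_v(d')_k=0$ for $k<i_1$, and also $\dwt(C',U')_{i_1}=\LL_v(d')_{i_1}=0$; $\dwt(C,U)_k=\dwt(C',U')_k$ for $k>i_1$; and, setting $d'=\ell(C')-\#U'$, one has $d'=d-k_0+\dwt(C,U)_{i_1}$, which lies in $[0,\absrajcode{v}-\absinvcode{v}]$ (nonnegativity since at most $d$ links of $C$ are unmarked, so at least $k_0-d$ of the $k_0$ links $(i_1,\star)$ are marked; the upper bound since $\#U'\ge\#M(C')\ge\absoverrajcode{v}$ by Theorem~\ref{prop:minnummarks}). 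The prefix inequality is trivial for $j<i_1$ and is Lemma~\ref{lem:onemarkbasecase} for $j=i_1$. For $j>i_1$, substituting the identities above and the inductive hypothesis for $(C',U')$ on $v$ reduces it to $\dwt(C,U)_{i_1}\le\sum_{k\le j}\bigl(\LL_w(d)_k-\LL_v(d')_k\bigr)$; the right-hand side equals $\LL_w(d)_{i_1}\ge\dwt(C,U)_{i_1}$ at $j=i_1$ and equals $\dwt(C,U)_{i_1}$ at $j=n$ (a length count from $|\LL_w(d)|=\ell(C)-d$ and $|\LL_v(d')|=\#U'$), so it is enough to know that this function of $j$ is nonincreasing on $j\ge i_1$.

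The crux — the step I expect to be the main obstacle — is therefore to prove $\LL_w(d)_k\le\LL_v(d')_k$ for every $k>i_1$ (equivalently $L_w(d)_k\ge L_v(d')_k$): an analogue, for $\leads$-vectors and for a whole block of links, of Lemma~\ref{lem:heavyequalitycase}. I would attack it by first checking that a single Bruhat cover $w\lessdot wt_{i_1,\star}$ weakly increases, in every coordinate $>i_1$, both endpoints of the window $[\orr(\cdot),\occ(\cdot)]$ out of which the $\LL$-vectors are built — the $\orr$ half is Lemma~\ref{lem:drecursionanychain}(iii), and the $\occ$ (equivalently $\cc$) half is a short inversion count showing only coordinate $j$ moves there and moves the right way — and then analyzing how the left-to-right filling producing $\LL_w(d)$ from $\orr(w)$ (as described in the proof of Lemma~\ref{lem:onemarkbasecase}, cf.\ Lemmas~\ref{lem:greedyrajchange}--\ref{lem:greedyleadsjchange}) interacts with the shift $d\mapsto d'$; an equality-case refinement in the spirit of Lemma~\ref{lem:heavyequalitycase}, splitting on whether $\LL_w(d)_{i_1}=\occ(w)_{i_1}$ (handled by Lemma~\ref{lem:maxmarkedones}, which forces the peeled links to be the head of the greedy chain) or $\LL_w(d)_{i_1}=\orr(w)_{i_1}+(\#U-\absoverrajcode{w})$ (where the equalities in the proof of Lemma~\ref{lem:onemarkbasecase} pin down $U'$ and the values $\orr(v)_k$ for $k>i_1$), should suffice. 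With that comparison secured, the induction closes as in Theorem~\ref{thm:leadingchainweight}: peel off the block of $(i_1,\star)$ links, apply the inductive hypothesis to $(C',U')$ on $v$, and restore coordinate $i_1$ using Lemma~\ref{lem:onemarkbasecase}.
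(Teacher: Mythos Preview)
Your framework and the reduction to the coordinatewise inequality $\LL_w(d)_k\le\LL_v(d')_k$ for $k>i_1$ are correctly derived, but the paper takes a different and shorter route that avoids this crux entirely, and your sketch of the crux has a genuine gap.

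The paper does not induct on length via a uniform coordinatewise comparison. After Lemma~\ref{lem:onemarkbasecase} handles coordinate $i_1$, it splits into three cases. If the inequality at $i_1$ is strict, the comparison is declared settled. If equality holds with common value strictly below $\occ(w)_{i_1}$, the paper observes that then $\LL_w(d)_p=\orr(w)_p$ for all $p>i_1$, and a short count (equation~(\ref{eqn:2})) forces the truncated pair $(C',U')$ to be \emph{heavy} with $U'=M(C')$ and $\orr(w^{(k)})_p=\orr(w)_p$ for $p>i_1$; the comparison beyond $i_1$ is then exactly Theorem~\ref{thm:leadingchainweight}, already proved. If equality holds with value $=\occ(w)_{i_1}$, Lemma~\ref{lem:maxmarkedones} forces the first block to be greedy and the argument iterates on $w^{(k)}$. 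The key simplification you are missing is this reduction to the heavy case: one never needs to compare $\LL_w(d)$ with a general $\LL_v(d')$, because in the only equality subcase where the peeled block is not greedy-controlled, the tail turns out to be heavy and Theorem~\ref{thm:leadingchainweight} finishes it.

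Your sketch of the crux implicitly assumes equality at $i_1$. You invoke Lemma~\ref{lem:maxmarkedones} to force the peeled block to be greedy in the $\LL_w(d)_{i_1}=\occ(w)_{i_1}$ subcase, but that lemma only bites when $\dwt(C,U)_{i_1}=\occ(w)_{i_1}$, not merely when $\LL_w(d)_{i_1}=\occ(w)_{i_1}$. Likewise, in the other subcase the ``equalities in the proof of Lemma~\ref{lem:onemarkbasecase}'' that you appeal to pin down $U'$ and $\orr(v)$ only under the hypothesis $\dwt(C,U)_{i_1}=\LL_w(d)_{i_1}$. When $\dwt(C,U)_{i_1}<\LL_w(d)_{i_1}$ strictly, the first block of $C$ is an arbitrary sequence of covers, $v=w^{(k_0)}$ depends on that arbitrary chain, and nothing in your outline controls $\LL_v(d')$ coordinatewise. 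Since in this regime $f(i_1)>\dwt(C,U)_{i_1}=f(n)$, showing $f(j)\ge\dwt(C,U)_{i_1}$ for intermediate $j$ genuinely requires more than the endpoint values; monotonicity of $f$ (your crux) is not a convenience here but the whole remaining content, and you have not supplied it in this case.
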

	\begin{proof}
		It is immediate from the definition of $\wt$ that the two assertions of the theorem are equivalent. We focus on the first. Since we are considering marked climbing chains with a fixed number of marks, the theorem is equivalent to proving 
		\[\bm{x}^{\LL_w(d)} = \max \left\{\bm{x}^{\dwt(C,M(C))} \mid (C,U) \mbox{ is a marked climbing chain of } w \mbox{ with } \#U = 
		\ell(C) - d \right\}. \] 
		in any term order with $x_1>x_2>\cdots>x_n$. 
		
		Assume the notation of Algorithm \ref{alg:1}. 
		By definition, $\LL_w(d)_{p}=\dwt(C,U)_{p}$ for $p<i_1$. Lemma \ref{lem:onemarkbasecase} shows that $\LL_w(d)_{i_1}\geq\dwt(C,U)_{i_1}$. If the inequality is strict, then there is nothing to prove. 
		
		Suppose first that $\LL_w(d)_{i_1}=\dwt(C,U)_{i_1}<\occ(w)_{i_1}$. Then for $p>i_1$, $\LL_w(d)_p=\orr(w)_p$. 
		Let $C_k$ be the last link in $C$ of the form $(i_1,\star)$. Let $C'=(C_{k+1},\ldots,C_m)$ and $U'=U\cap C'$.
		
		By Theorem \ref{prop:minnummarks},
		\[\sum_{p=i_1+1}^n \dwt(C,U)_p = |\dwt(C',U')|\geq \absoverrajcode{w^{(k)}}. \]
		For $p>i_1$, Lemma \ref{lem:heavytruncation} shows $\orr(w^{(k)})_p\geq \orr(w)_p$. Then
		\begin{align}
			\label{eqn:2}
			\sum_{p=i_1+1}^n \dwt(C,U)_p \geq \absoverrajcode{w^{(k)}}\geq \sum_{p=i_1+1}^n\orr(w)_p = \sum_{p=i_1+1}^n\LL_w(d)_p.
		\end{align}
		
		Since $|\LL_w(d)| = |\dwt(C,U)|$ and $\LL_w(d)_p = \dwt(C,U)_p$ for $p\leq i_1$, it follows that equality holds throughout in (\ref{eqn:2}). Thus $C'$ is heavy for $w^{(k)}$, so the theorem now follows from Theorem \ref{thm:leadingchainweight}.
			
		It remains to consider the case that $\LL_w(d)_{i_1}=\dwt(C,U)_{i_1}=\occ(w)_{i_1}$. In this case, we may pass to $w^{(k)}$ and repeat the above arguments and iterate. The iteration will terminate at worst with $w_0$, for which the theorem is trivial.
	\end{proof}

	\section{On leading monomials in term orders satisfying $x_1>\cdots>x_n$}
	\label{sec:>}
	We consider the leading monomials of the homogeneous components of $\mathfrak{G}_w$ in any term order satisfying $x_1>x_2>\cdots>x_n$. The lowest degree component, the Schubert polynomial $\mathfrak{S}_w$, is well-known to have leading term coming from the top-reduced pipe dream of the permutation \cite{laddermoves}. We construct this monomial from the leaping chain described below.
	
	Next, we construct the staircase chain, which we show is heavy and conjecture yields the leading monomial of $\mathfrak{G}_w^{\mathrm{top}}$ in any term order satisfying $x_1>x_2>\cdots>x_n$. We conclude with an analogue of Conjecture \ref{conj:elena}.

	\subsection{Leaping Chain}
	\label{sec:leap}
	\begin{definition}
		To each permutation $w\in S_n$ we associate a pair of integers $\leap(w)$, called the \emph{leap pair} of $w$, as follows. The leap pair of $w_0$ is undefined. If $w\neq w_0$, set 
		\[a=\min\{k\mid w(k)\neq n+1-k\}\quad \mbox{and}\quad b=\min\{k\mid k>a\mbox{ and }w\lessdot wt_{ak}\}.\] 
		Then $\leap(w)=(a,b)$.
	\end{definition}
	
	\begin{definition}
		Define the \emph{leaping climbing chain} $\CL(w)$ of $w\in S_n$ as follows. If $w=w_0$, then $\CL(w)$ is the empty sequence. Otherwise, let $\leap(w)=(a,b)$. Define $\CL(w)$ inductively by prepending $(a,b)$ to $\CL(wt_{ab})$.
	\end{definition}
	
	The following example demonstrates a graphical interpretation of $\leap(w)$.
	\begin{example}
		Continuing Example \ref{exp:31452marked} with $w=256341$, we compute $\CL(w)$.
		Recall the Rothe diagram of $w$ is
		\begin{center}
			\begin{tikzpicture}[scale=.55]
			\draw (0,0)--(6,0)--(6,6)--(0,6)--(0,0);
			\draw[draw=red] (6,5.5) -- (1.5,5.5) node[red] {$\bullet$} -- (1.5,0);
			\draw[draw=red] (6,4.5) -- (4.5,4.5) node[red] {$\bullet$} -- (4.5,0);
			\draw[draw=red] (6,3.5) -- (5.5,3.5) node[red] {$\bullet$} -- (5.5,0);
			\draw[draw=red] (6,2.5) -- (2.5,2.5) node[red] {$\bullet$} -- (2.5,0);
			\draw[draw=red] (6,1.5) -- (3.5,1.5) node[red] {$\bullet$} -- (3.5,0);
			\draw[draw=red] (6,0.5) -- (0.5,0.5) node[red] {$\bullet$} -- (0.5,0);
			
			\filldraw[draw=black,fill=lightgray] (0,5)--(1,5)--(1,6)--(0,6)--(0,5);
			\filldraw[draw=black,fill=lightgray] (0,4)--(1,4)--(1,5)--(0,5)--(0,4);
			\filldraw[draw=black,fill=lightgray] (0,3)--(1,3)--(1,4)--(0,4)--(0,3);
			\filldraw[draw=black,fill=lightgray] (0,2)--(1,2)--(1,3)--(0,3)--(0,2);
			\filldraw[draw=black,fill=lightgray] (0,1)--(1,1)--(1,2)--(0,2)--(0,1);
			\filldraw[draw=black,fill=lightgray] (2,4)--(3,4)--(3,5)--(2,5)--(2,4);
			\filldraw[draw=black,fill=lightgray] (3,4)--(4,4)--(4,5)--(3,5)--(3,4);
			\filldraw[draw=black,fill=lightgray] (2,3)--(3,3)--(3,4)--(2,4)--(2,3);
			\filldraw[draw=black,fill=lightgray] (3,3)--(4,3)--(4,4)--(3,4)--(3,3);
			
			\draw (0,1)--(6,1);
			\draw (0,2)--(6,2);
			\draw (0,3)--(6,3);
			\draw (0,4)--(6,4);
			\draw (0,5)--(6,5);
			
			\draw (1,0)--(1,6);
			\draw (2,0)--(2,6);
			\draw (3,0)--(3,6);
			\draw (4,0)--(4,6);
			\draw (5,0)--(5,6);
			
			\node at (-1.5,2.5) {$D(w)=$};
			\node at (6.3,2.5) {.};
			\end{tikzpicture}
		\end{center}
		Let $\leap(w)=(i_1,j_1)$. Since $w(1)\neq 6$, $i_1=1$. The definition of the leaping pair says that \[j_1=\min\{k\mid k>a\mbox{ and }w\lessdot wt_{ak}\}=2.\] Graphically, $j_1$ is the row index of the northmost dot inside the region south and east of the hook in row $i_1$.
		
		The Rothe diagram of $wt_{12}=526341$ is
		\begin{center}
			\begin{tikzpicture}[scale=.55]
			\draw (0,0)--(6,0)--(6,6)--(0,6)--(0,0);
			\draw[draw=red] (6,4.5) -- (1.5,4.5) node[red] {$\bullet$} -- (1.5,0);
			\draw[draw=red] (6,5.5) -- (4.5,5.5) node[red] {$\bullet$} -- (4.5,0);
			\draw[draw=red] (6,3.5) -- (5.5,3.5) node[red] {$\bullet$} -- (5.5,0);
			\draw[draw=red] (6,2.5) -- (2.5,2.5) node[red] {$\bullet$} -- (2.5,0);
			\draw[draw=red] (6,1.5) -- (3.5,1.5) node[red] {$\bullet$} -- (3.5,0);
			\draw[draw=red] (6,0.5) -- (0.5,0.5) node[red] {$\bullet$} -- (0.5,0);
			
			\filldraw[draw=black,fill=lightgray] (0,5)--(1,5)--(1,6)--(0,6)--(0,5);
			\filldraw[draw=black,fill=lightgray] (0,4)--(1,4)--(1,5)--(0,5)--(0,4);
			\filldraw[draw=black,fill=lightgray] (0,3)--(1,3)--(1,4)--(0,4)--(0,3);
			\filldraw[draw=black,fill=lightgray] (0,2)--(1,2)--(1,3)--(0,3)--(0,2);
			\filldraw[draw=black,fill=lightgray] (0,1)--(1,1)--(1,2)--(0,2)--(0,1);
			\filldraw[draw=black,fill=lightgray] (1,5)--(2,5)--(2,6)--(1,6)--(1,5);
			\filldraw[draw=black,fill=lightgray] (2,5)--(3,5)--(3,6)--(2,6)--(2,5);
			\filldraw[draw=black,fill=lightgray] (3,5)--(4,5)--(4,6)--(3,6)--(3,5);
			\filldraw[draw=black,fill=lightgray] (2,3)--(3,3)--(3,4)--(2,4)--(2,3);
			\filldraw[draw=black,fill=lightgray] (3,3)--(4,3)--(4,4)--(3,4)--(3,3);
			
			\draw (0,1)--(6,1);
			\draw (0,2)--(6,2);
			\draw (0,3)--(6,3);
			\draw (0,4)--(6,4);
			\draw (0,5)--(6,5);
			
			\draw (1,0)--(1,6);
			\draw (2,0)--(2,6);
			\draw (3,0)--(3,6);
			\draw (4,0)--(4,6);
			\draw (5,0)--(5,6);
			
			\node at (-2,2.5) {$D(wt_{12})=$};
			\node at (6.3,2.5) {.};
			\end{tikzpicture}
		\end{center}
		From the Rothe diagram, we observe that $\leap(wt_{12}) = (1,3)$. Continuing in this fashion yields 
		\[\CL(w)=((1,2),(1,3),(2,3),(3,4),(3,5),(4,5)). \]
		This is the chain $C^{(3)}$ in Example \ref{exp:31452unmarked}.
	\end{example}

	\begin{definition}
		Fix $w\in S_n$. Define the \emph{highest nesting length} $h(w)$ as follows.
		If $w=w_0$, set $h(w)=0$. Otherwise, set $(q_0,q_1)=\leap(w)$. Iteratively define 
		\[q_k=\min\{p\mid p>q_{k-1}\mbox{ and }wt_{q_0q_1}\cdots t_{q_{0}q_{k-1}}\lessdot wt_{q_0q_1}\cdots t_{q_{0}q_{k-1}} t_{q_{0}p}\}\] 
		until the set on the right side is empty. Let the $h(w)=\#\{q_1,q_2,\ldots\}$ be the number of steps taken.
	\end{definition}
	
	Graphically, the quantity $h(w)$ counts the sequence of highest nested hooks south and east of the dot in row $q_0$ of $D(w)$.
	\begin{example}
		For the $w=1465273$, $h(w)=3$ counts the hooks indicated in Figure \ref{fig:hwexample}.
	\end{example}

	\begin{figure}[ht]
		\begin{center}
			\includegraphics{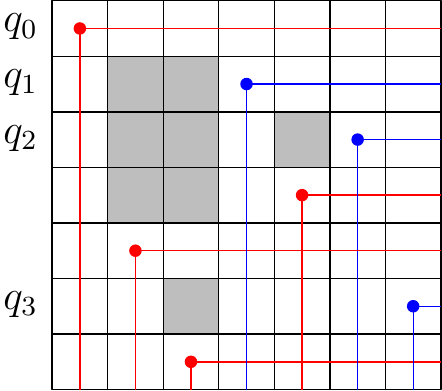}
		\end{center}
		\caption{}
		\label{fig:hwexample}
	\end{figure}

	\begin{lemma}
		\label{lem:hwminimal}
		For any $w\in S_n$ and marked climbing chain $(C,U)$ of $w$, 
		\[\dwt(C,U)_{i_1}\geq h(w). \]
	\end{lemma}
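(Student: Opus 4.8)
The plan is to fix an arbitrary climbing chain $C$ of $w$ (so $w\neq w_0$), describe precisely how the entry in position $i_1$ evolves along $C$, translate $\dwt(C,M(C))_{i_1}$ into a count of ``ascents'' of that evolution, and then show that each of the $h(w)$ steps of the leap construction forces a distinct ascent. Since $M(C)\subseteq U$ forces $\dwt(C,M(C))_{i_1}\le\dwt(C,U)_{i_1}$, it suffices to prove the bound when $U=M(C)$.

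\emph{Structure of the initial segment.} First I would record that, because the first coordinates of the links of $C$ are weakly increasing and $C$ ends at $w_0$, one has $i_1=\min\{k\mid w(k)\neq n+1-k\}=:q_0$, and that the links of the form $(i_1,\star)$ form a prefix $C_1,\dots,C_r$ of $C$. Then I would check by induction on $p$ that the entry in position $i_1$ after applying $C_1,\dots,C_p$ is some $v_p$ with $w(i_1)=v_0<v_1<\cdots<v_r=n+1-i_1$, that $C_p=(i_1,w^{-1}(v_p))$ where $w^{-1}$ is the inverse of the \emph{original} permutation, and—via Lemma~\ref{lem:cover}—that the cover condition imposed at step $p$ is equivalent to: every value $c$ with $v_{p-1}<c<v_p$ satisfies $w^{-1}(c)>w^{-1}(v_p)$. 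The description of $M(C)$ then says that $C_p$ (for $1\le p\le r$) lies in $M(C)$ exactly when $w^{-1}(v_{p-1})<w^{-1}(v_p)$, using the convention $w^{-1}(v_0)=i_1$; call such a $p$ an \emph{ascent index}. Thus $\dwt(C,M(C))_{i_1}$ equals the number of ascent indices in $[r]$.

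\emph{Matching leap steps to ascents.} Next I would recall the leap data $q_0=i_1$, $u_0=w(i_1)$, $q_k=\min\{p>q_{k-1}\mid w(p)>u_{k-1}\}$, $u_k=w(q_k)$, which terminates at $u_h=n+1-i_1$ with $h=h(w)$, and whose key feature is that $w(x)<u_{k-1}$ whenever $q_{k-1}<x<q_k$. For $m\in[h]$ I set $p_m$ to be the least $p$ with $v_p>u_{m-1}$ (well-defined, with $1\le p_m\le r$, since $v_0=u_0\le u_{m-1}<u_h=v_r$). Then I would prove in turn: (a) $w^{-1}(v_{p_m})>q_{m-1}$, by noting that a value $v_{p_m}>u_{m-1}$ cannot sit at a position $\le i_1$ nor, by leftmost-minimality of the $q_\ell$, inside any gap $(q_{\ell-1},q_\ell)$ or at $q_\ell$ for $\ell\le m-1$; (b) hence $v_{p_m-1}=u_{m-1}$, because otherwise $u_{m-1}$ is a value strictly between $v_{p_m-1}$ and $v_{p_m}$, so the validity constraint gives $q_{m-1}=w^{-1}(u_{m-1})>w^{-1}(v_{p_m})$, contradicting (a); (c) therefore $w^{-1}(v_{p_m-1})=w^{-1}(u_{m-1})=q_{m-1}<w^{-1}(v_{p_m})$, i.e.\ $p_m$ is an ascent index; and (d) $v_{p_m}\le u_m$, since by (a) and $v_{p_m}>u_{m-1}$ the position $w^{-1}(v_{p_m})$ is a valid candidate for $q_m$, so $w^{-1}(v_{p_m})\ge q_m$, while $v_{p_m}>u_m$ would again contradict the validity constraint via $q_m=w^{-1}(u_m)>w^{-1}(v_{p_m})$. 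From (d) and the minimality defining $p_{m+1}$ one gets $p_1<p_2<\cdots<p_h$, so there are at least $h=h(w)$ ascent indices, whence $\dwt(C,M(C))_{i_1}\ge h(w)$, which proves the lemma.

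\emph{Expected difficulty.} The bookkeeping in the structural step is the part I expect to require the most care: identifying the second coordinate of each initial link with $w^{-1}(v_p)$ in the \emph{original} permutation, and collapsing the whole sequence of cover conditions into the single ``no intermediate value'' constraint, since the remainder of the argument is carried out entirely in terms of $w$ through that identification. The iterated leftmost-minimality argument establishing (a), though short, is the other place where one must be precise.
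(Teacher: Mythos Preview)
Your argument is correct, and it takes a genuinely different route from the paper's. The paper proves the lemma by a short induction on $\ell(w_0)-\ell(w)$: peel off the first link $C_1=(i_1,j_1)$, observe that $h(wt_{i_1j_1})$ equals $h(w)$ when $j_1$ is not the leap target and $h(w)-1$ when it is (in which case the next link is forced to be marked), and apply the inductive hypothesis to the truncated chain. Your proof instead works directly with the full initial block of $(i_1,\star)$-links: you encode that block by the increasing sequence of values $v_0<\cdots<v_r$ that pass through position $i_1$, rewrite the cover conditions and the minimal-marking condition purely in terms of the original $w$, and then explicitly locate $h(w)$ distinct ascent indices $p_1<\cdots<p_h$ by matching each leap step $u_{m-1}\to u_m$ to the first $p$ with $v_p>u_{m-1}$. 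The paper's induction is shorter and cleaner; your approach is more constructive, producing an explicit injection from leap steps into minimal marks, which makes the mechanism behind the inequality more transparent and could be useful if one ever wants finer control over which marks witness the bound. Your caveats about where to be careful are accurate: the identification $j_p=w^{-1}(v_p)$ and the translation of the cover condition into ``$w^{-1}(c)>w^{-1}(v_p)$ for all intermediate values $c$'' are exactly the structural inputs on which steps (a)--(d) rest, and you have handled them correctly.
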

	\begin{proof}
		If $w=w_0$, the lemma reduces to $0\geq 0$. Otherwise, let $q=\min\{j\mid w(j)\neq n+1-j\}$. We work by induction. Let $C_p=(i_p,j_p)$ for all $p$, so $q=i_1$. Set $w'=wt_{i_1j_1}$ and $(C',U')$ be the corresponding truncation of $(C,U)$. Suppose $\leap(w)=(i_1,b)$ If $j_1\neq b$, then $j_1>b$, so $h(w')=h(w)$. In this case, induction implies
		\[\dwt(C,U)_{i_1} \geq \dwt(C',U')_{i_1} \geq h(w')=h(w). \] 
		Hence we may suppose that $j_1=b$. If $w'(i_1)=n-i_1+1$, then $\dwt(C,U)_{i_1}=1=h(w)$. Otherwise, $i_1=i_2$ and the definition of $\leap(w)$ forces $j_2>j_1$. Then $(i_2,j_2)\in U$, so by induction,
		\[\dwt(C,U)_{i_1} = \dwt(C',U')_{i_1}+1 \geq h(w')+1 = h(w). \]
	\end{proof}

	\begin{lemma}
		\label{lem:hwminimalsamestart}
		Fix any $w\in S_n$ and let $C$ be a climbing chain $C$ of $w$ with $\dwt(C,C)_{i_1}=h(w)$. Let $C_k$ be the last link in $C$ of the form $(i_1,\star)$. Then for each $p\in[k]$,
		\[C_p=\CL(w)_p.\]
	\end{lemma}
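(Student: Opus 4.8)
\emph{Proposal.} The plan is to induct on $m=\ell(w_0)-\ell(w)$; when $w=w_0$ there is nothing to prove. Write $C_p=(i_p,j_p)$, and recall that the first coordinate $i_1$ of the first link of any climbing chain equals the first position at which $w$ disagrees with $w_0$, so $i_1$ is exactly the index appearing in $\leap(w)=(i_1,b)$. Since every link of $C$ is marked, the hypothesis $\dwt(C,C)_{i_1}=h(w)$ says that $C$ has exactly $h(w)$ links of the form $(i_1,\star)$, the minimum allowed by Lemma~\ref{lem:hwminimal}. I will first show $C_1=(i_1,b)=\CL(w)_1$, and then strip off $C_1$ and recurse.

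\emph{Step 1: $C_1=(i_1,b)$.} Suppose not. Since $C_1$ realizes a cover $w\lessdot wt_{i_1j_1}$ and $b$ is the smallest index $>i_1$ realizing such a cover, we must have $j_1>b$. Put $w'=wt_{i_1j_1}$. Exactly as in the proof of Lemma~\ref{lem:hwminimal}, the cover condition on $C_1$ forces $w(b)>w(j_1)>w(i_1)$; consequently $w'(i_1)=w(j_1)$ is still strictly less than $n+1-i_1$ (so $i_1$ is again the first disagreement of $w'$ with $w_0$), the row $b$ is still the top dot south-and-east of row $i_1$, and therefore $h(w')=h(w)$. The truncation $C'=(C_2,\ldots,C_m)$ is a climbing chain of $w'$ whose first link again has first coordinate $i_1$, so Lemma~\ref{lem:hwminimal} (with all links marked) gives $\dwt(C',C')_{i_1}\ge h(w')=h(w)$. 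But the links of $C$ of the form $(i_1,\star)$ are $C_1$ together with those of $C'$, so $\dwt(C,C)_{i_1}=\dwt(C',C')_{i_1}+1\ge h(w)+1$, contradicting the hypothesis. Hence $C_1=(i_1,b)=\CL(w)_1$.

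\emph{Step 2: recursion.} Put $w'=wt_{i_1b}$, so that $\CL(w')=(\CL(w)_2,\CL(w)_3,\ldots)$ by the recursive definition of $\CL$, and $C'=(C_2,\ldots,C_m)$ is a climbing chain of $w'$. If $w'(i_1)=n+1-i_1$, then no later link of $C$ can have first coordinate $i_1$ (once the hook in row $i_1$ reaches column $n+1-i_1$ it cannot move again), so $C_1$ is the last such link and the conclusion holds with $k=1$. Otherwise $i_1$ is still the first disagreement of $w'$, and the analysis in the proof of Lemma~\ref{lem:hwminimal} gives $h(w')=h(w)-1$; since the $(i_1,\star)$-links of $C$ are $C_1$ followed by those of $C'$, we get $\dwt(C',C')_{i_1}=h(w)-1=h(w')$. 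As $\ell(w')>\ell(w)$, the inductive hypothesis applies to $C'$ and yields $C'_p=\CL(w')_p$ for all $p$ up to the index of the last $(i_1,\star)$-link of $C'$. Since $C_p=C'_{p-1}$ and $\CL(w)_p=\CL(w')_{p-1}$ for $p\ge 2$, this is exactly $C_p=\CL(w)_p$ for $p\in[k]$.

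The only delicate point is the identity $h(w')=h(w)$ used in Step~1 (and its $-1$ analogue in Step~2): these are the same stability observations for the chain of top nested hooks south-and-east of row $i_1$ that are already invoked in the proof of Lemma~\ref{lem:hwminimal}, so I expect no new work there. Everything else is bookkeeping with the recursion defining $\CL$ and with the count of links of the form $(i_1,\star)$ along $C$.
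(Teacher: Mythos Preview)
Your proof is correct and follows essentially the same approach as the paper's: both argue by contradiction that $C_1=\CL(w)_1$ using the stability fact $h(wt_{i_1j_1})=h(w)$ when $j_1>b$ (borrowed from the proof of Lemma~\ref{lem:hwminimal}) together with Lemma~\ref{lem:hwminimal} applied to the truncation, and then iterate. The paper compresses your Step~2 into the phrase ``Iterating this argument proves the lemma,'' whereas you spell out the induction explicitly, but the content is the same.
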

	\begin{proof}
		Suppose $C_1\neq\CL(w)_1$. Let $C_1=(i_1,j_1)$ and $\CL(w)_1=(i_1,b)$, so $b<j_1$. Then $h(wt_{i_1j_1}) = h(w) = h(wt_{i_1b})+1$. By Lemma \ref{lem:hwminimal}, it follows that
		\[\dwt(C,C)\geq h(w)+1>h(w)=\dwt(\CL(w),\CL(w)).\]
		This would contradict the assumption $\dwt(C,C)= h(w)$. Thus $C_1=\CL(w)_1$. Iterating this argument proves the lemma.
	\end{proof}

	\begin{theorem}
		\label{thm:leapingleading}
		In any term order with $x_1>x_2>\dots>x_n$, the leading monomial of the Schubert polynomial $\mathfrak{S}_w$ is $\bm{x}^{\wt(\CL(w),\CL(w))}$.
	\end{theorem}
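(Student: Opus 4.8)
The starting point is the chain expansion of the Schubert polynomial: by the corollary to Theorem~\ref{thm:grothformula}, $\mathfrak{S}_w = \sum_{C}\bm{x}^{\wt(C,C)}$, the sum over all climbing chains $C$ of $w$, with every coefficient equal to $1$. Since $\mathfrak{S}_w$ is homogeneous of degree $\ell(w)$, all the exponent vectors $\wt(C,C)$ share the same coordinate sum. I would therefore reduce the theorem to the purely combinatorial claim that $\wt(\CL(w),\CL(w))$ \emph{dominates} every $\wt(C,C)$, i.e.
\[
\sum_{i=1}^{k}\wt(\CL(w),\CL(w))_i \ \ge\ \sum_{i=1}^{k}\wt(C,C)_i \qquad\text{for all } k\in[n].
\]
This suffices: if $a$ dominates $b$, $|a|=|b|$, and $a\neq b$, then $a-b$ is a nonnegative integer combination of the vectors $e_i-e_{i+1}$, and one can pass from $b$ to $a$ by single such steps within $\mathbb{N}^n$, each of which strictly increases the monomial in any term order with $x_1>x_2>\cdots>x_n$; hence $\bm{x}^a>\bm{x}^b$ in every such order. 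Combined with positivity of the coefficients of $\mathfrak{S}_w$, this makes $\bm{x}^{\wt(\CL(w),\CL(w))}$ the leading monomial.

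Using $\wt(\xi)_i=n-i-\dwt(\xi)_i$, the domination claim is equivalent to: for every climbing chain $C$ of $w$ and every $k$,
\[
f_k(\CL(w))\ \le\ f_k(C), \qquad\text{where}\quad f_k(C):=\sum_{i=1}^{k}\dwt(C,C)_i .
\]
Here $f_k(C)$ is simply the number of links of $C$ lying in rows $\le k$. Since the row indices along a climbing chain are weakly increasing, these are the \emph{first} $f_k(C)$ links of $C$; let $v^C_k$ be the permutation they reach. The remaining links form a climbing chain of $v^C_k$ that uses only rows $>k$, which forces $v^C_k(i)=n+1-i$ for all $i\le k$. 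Hence $f_k(C)=\ell(v^C_k)-\ell(w)$, and since $v^C_k$ has its $k$ largest values in its first $k$ positions, $\ell(v^C_k)$ differs from a constant (depending only on $n$ and $k$) by the number of inversions among the last $n-k$ positions of $v^C_k$. So the claim becomes: the leaping chain minimizes this tail inversion count.

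I would prove $f_k(\CL(w))\le f_k(C)$ by induction on $\ell(w_0)-\ell(w)$. Let $i_1$ be the first position where $w$ differs from $w_0$. Each climbing chain $C$ of $w$ performs all its row-$i_1$ links first; write $C=A\cdot C'$, so $C'$ is a climbing chain of $v:=wt_A$. By Lemma~\ref{lem:hwminimal}, $A$ has at least $h(w)$ links, while $\CL(w)$ has exactly $h(w)$ of them (from the definitions of $\CL$ and $h(w)$); write $\CL(w)=A_0\cdot\CL(v_0)$ with $\#A_0=h(w)$ and $v_0:=wt_{A_0}$, whose first deviation exceeds $i_1$. For $k<i_1$ both sides of the desired inequality vanish. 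For $k\ge i_1$ one has $f_k(C)=\#A+f_k(C')$ and $f_k(\CL(w))=h(w)+f_k(\CL(v_0))$, and by the length reformulation (using $\ell(v_0)=\ell(w)+h(w)$ and $\ell(v)=\ell(w)+\#A$) the required estimate $h(w)+f_k(\CL(v_0))\le \#A+f_k(C')$ is equivalent to
\[
\ell\bigl(v^{\CL(w)}_k\bigr)\ \le\ \ell\bigl(v^{C}_k\bigr).
\]

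The technical heart — and the step I expect to be the main obstacle — is establishing this last inequality, equivalently that $v^{\CL(w)}_k$ is length-minimal (indeed Bruhat-minimal) among all permutations $v\ge w$ with $v|_{[1,k]}=w_0|_{[1,k]}$. The content is that the northmost-dot rule defining $\leap$ produces, at every stage, the most conservative legal rearrangement of the first $k$ rows, keeping the still-unsorted entries as sorted as possible. I anticipate proving this by a secondary induction peeling off one row at a time, using Lemma~\ref{lem:hwminimalsamestart} at the boundary: a chain that is not $f_{i_1}$-optimal already satisfies the inequality strictly, while an $f_{i_1}$-optimal chain agrees with $\CL(w)$ throughout row $i_1$, so the comparison descends to $v_0$ and the outer induction applies. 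Once the Bruhat-minimality claim is verified, strictness for $C\neq\CL(w)$ is automatic (dominance together with $\wt(C,C)\neq\wt(\CL(w),\CL(w))$ forces a strict inequality in some prefix), so no separate equality analysis is needed.
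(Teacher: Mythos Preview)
Your plan is essentially the paper's one-line induction on $\ell(w_0)-\ell(w)$ via Lemmas~\ref{lem:hwminimal} and~\ref{lem:hwminimalsamestart}, and you are more careful than the paper in making explicit that ``any term order with $x_1>\cdots>x_n$'' requires dominance of $\wt(\CL(w),\CL(w))$, not merely lexicographic maximality. Your reformulation $f_k(C)=\ell(v^C_k)-\ell(w)$, and the observation that $v^C_k|_{[1,k]}=w_0|_{[1,k]}$, are correct and useful.

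There is, however, a genuine gap in the row-peeling induction as you have sketched it. In the $f_{i_1}$-optimal case Lemma~\ref{lem:hwminimalsamestart} indeed forces $C$ to agree with $\CL(w)$ through row $i_1$, so both truncate to chains of the \emph{same} $v_0$ and the outer induction handles every $k\ge i_1$. But in the non-$f_{i_1}$-optimal case your phrase ``already satisfies the inequality strictly'' only delivers $f_{i_1}(\CL(w))<f_{i_1}(C)$. For $k>i_1$ the truncation $C'$ is a chain of some $v\neq v_0$; the outer induction (applied to $v$) gives $f_k(\CL(v))\le f_k(C')$, whereas what you need is $f_k(\CL(v_0))\le (\#A-h(w))+f_k(C')$. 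Subtracting, you are left with
\[
\ell\bigl(v^{\CL(v_0)}_k\bigr)\ \le\ \ell\bigl(v^{\CL(v)}_k\bigr),
\]
which is exactly the Bruhat-minimality of $v_0=v^{\CL(w)}_{i_1}$ among $\{u\ge w:u|_{[1,i_1]}=w_0|_{[1,i_1]}\}$ --- the very statement you flagged as the main obstacle. So the induction does not close on itself in the strict case; Lemmas~\ref{lem:hwminimal} and~\ref{lem:hwminimalsamestart} alone settle $k=i_1$ and the equality branch, but not the strict branch for larger $k$. To finish, either prove the Bruhat-minimality of $v^{\CL(w)}_k$ directly (e.g.\ describe $v^{\CL(w)}_k|_{[k+1,n]}$ explicitly and verify the tableau criterion against any $v\ge w$ with the same top $k$ rows), or recast the induction link-by-link and establish the monotonicity $\ell(v^{\CL(w)}_k)\le \ell(v^{\CL(w')}_k)$ for every row-$i_1$ cover $w\lessdot w'$.
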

	\begin{proof}
		The theorem follows from a straightforward induction using Lemmas \ref{lem:hwminimal} and \ref{lem:hwminimalsamestart}.
	\end{proof}

	\subsection{Staircase Chain}
	\label{sec:stair}
	\begin{definition}
		To each permutation $w\in S_n$ we associate a sequences of pairs $\stair(w)$, called the \emph{staircase} of $w$, as follows. The staircase of $w_0$ is undefined. If $w\neq w_0$, let
		$a=\min\{k\mid w(k)\neq n+1-k\}$ and consider $K=\{k\mid k>a\mbox{ and }w\lessdot wt_{ak}\}$.
		Let $K_1$ be the set of elements $k\in K$ with
		\[\orr(w)_k=\max(\{\orr(w)_{k'}\mid k'\in K \}).\]
		Iteratively, let $K_p$ be the set of elements $k\in K$ with $k<\min(K_{p-1})$ and
		\[\orr(w)_k=\max\left\{\orr(w)_{k'}\,\middle|\, k'\in K-\bigcup_{q=1}^{p-1}K_q \right\}.\]
		Suppose this process results in sets $K_1,K_2,\ldots,K_q$.		
		Then $\stair(w)$ is obtained by ordering the pairs
		\[\left\{(a,k)\mid k\in \bigcup_{p=1}^q K_p\right\} \]
		so their second components are decreasing.
	\end{definition}

	\begin{definition}
		Define the \emph{stairase climbing chain} $\CS(w)$ of $w\in S_n$ as follows. If $w=w_0$, then $\CS(w)$ is the empty sequence. Otherwise, let $\stair(w)=((a,b_1),(a,b_2),\ldots,(a,b_k))$. Define $\CS(w)$ inductively by concatenating $\stair(w)$ and $\CS(wt_{ab_1}\cdots t_{ab_k})$.
	\end{definition}

	\begin{example}
		Consider the permutation $w=1764352$. Below we draw the Rothe diagram of $w$ and label the hook in each relevant row with the corresponding value of $\orr(w)$, the length of the longest hook nesting below itself.
		\begin{center}
			\begin{tikzpicture}[scale=.55]
			\draw (0,0)--(7,0)--(7,7)--(0,7)--(0,0);
			\draw[draw=red] (7,6.5) -- (0.5,6.5) node[red] {$\bullet$} -- (0.5,0);
			\draw[draw=red] (7,5.5) -- (6.5,5.5) node[red] {$\bullet$} -- (6.5,0);
			\draw[draw=red] (7,4.5) -- (5.5,4.5) node[red] {$\bullet$} -- (5.5,0);
			\draw[draw=red] (7,3.5) -- (3.5,3.5) node[red] {$\bullet$} -- (3.5,0);
			\draw[draw=red] (7,2.5) -- (2.5,2.5) node[red] {$\bullet$} -- (2.5,0);
			\draw[draw=red] (7,1.5) -- (4.5,1.5) node[red] {$\bullet$} -- (4.5,0);
			\draw[draw=red] (7,0.5) -- (1.5,0.5) node[red] {$\bullet$} -- (1.5,0);
			
			\node[left] at (6.6, 5.5) {0};
			\node[left] at (5.6, 4.5) {0};
			\node[left] at (3.6, 3.5) {1};
			\node[left] at (2.6, 2.5) {1};
			\node[left] at (1.6, 0.5) {0};
			
			\filldraw[draw=black,fill=lightgray] (1,5)--(2,5)--(2,6)--(1,6)--(1,5);
			\filldraw[draw=black,fill=lightgray] (2,5)--(3,5)--(3,6)--(2,6)--(2,5);
			\filldraw[draw=black,fill=lightgray] (3,5)--(4,5)--(4,6)--(3,6)--(3,5);
			\filldraw[draw=black,fill=lightgray] (4,5)--(5,5)--(5,6)--(4,6)--(4,5);
			\filldraw[draw=black,fill=lightgray] (5,5)--(6,5)--(6,6)--(5,6)--(5,5);
			
			\filldraw[draw=black,fill=lightgray] (1,4)--(2,4)--(2,5)--(1,5)--(1,4);
			\filldraw[draw=black,fill=lightgray] (2,4)--(3,4)--(3,5)--(2,5)--(2,4);
			\filldraw[draw=black,fill=lightgray] (3,4)--(4,4)--(4,5)--(3,5)--(3,4);
			\filldraw[draw=black,fill=lightgray] (4,4)--(5,4)--(5,5)--(4,5)--(4,4);
			
			\filldraw[draw=black,fill=lightgray] (1,3)--(2,3)--(2,4)--(1,4)--(1,3);
			\filldraw[draw=black,fill=lightgray] (2,3)--(3,3)--(3,4)--(2,4)--(2,3);
			
			\filldraw[draw=black,fill=lightgray] (1,2)--(2,2)--(2,3)--(1,3)--(1,2);
			
			\filldraw[draw=black,fill=lightgray] (1,1)--(2,1)--(2,2)--(1,2)--(1,1);
			
			\draw (0,1)--(7,1);
			\draw (0,2)--(7,2);
			\draw (0,3)--(7,3);
			\draw (0,4)--(7,4);
			\draw (0,5)--(7,5);
			\draw (0,6)--(7,6);
			
			\draw (1,0)--(1,7);
			\draw (2,0)--(2,7);
			\draw (3,0)--(3,7);
			\draw (4,0)--(4,7);
			\draw (5,0)--(5,7);
			\draw (6,0)--(6,7);
			
			\node at (-1.5,2.5) {$D(w)=$};
			\node at (6.3,2.5) {.};
			\end{tikzpicture}
		\end{center}
	We have $K=\{7,5,4,3,2\}$, with $K_1=\{5,4\}$ and $K_2=\{3,2\}$. Thus,
	\[\stair(w) = ((1,5),(1,4),(1,3),(1,2)). \]
	Continuing in this fashion with $wt_{15}t_{14}t_{13}t_{12}=7643152$, one obtains
	\[\CS(w) = ((1,5),(1,4),(1,3),(1,2),(3,6),(4,6),(5,7),(5,6)).\qedhere\]
	\end{example}
	
	\begin{theorem}
		\label{thm:staircaseheavy}
		For any $w\in S_n$, $\CS(w)$ is a heavy climbing chain.	
	\end{theorem}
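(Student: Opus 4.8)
The statement asserts two things, so I would split the argument accordingly: first that $\CS(w)$ is a climbing chain, then that it is heavy, i.e.\ that $\#M(\CS(w))=\absoverrajcode{w}$.

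For the climbing-chain claim, fix one application of $\stair$ at row $a=\min\{k\mid w(k)\neq n{+}1{-}k\}$ and let $b_1>b_2>\cdots>b_k$ be the chosen second coordinates, all lying in $K=\{k>a\mid w\lessdot wt_{ak}\}$. Two elementary facts drive everything: (1) for $k<k'$ in $K$ one has $w(k)>w(k')$, so $w(b_1)<w(b_2)<\cdots<w(b_k)$; and (2) applying $t_{ab_1},t_{ab_2},\dots,t_{ab_k}$ in this order keeps each step a Bruhat cover --- after $t_{ab_1}\cdots t_{ab_p}$ the entry in row $a$ is $w(b_p)$, the positions strictly between $a$ and $b_{p+1}$ are untouched, and $b_{p+1}\in K$ forbids any of them from carrying a value strictly between $w(b_p)$ and $w(b_{p+1})$, so Lemma~\ref{lem:cover} applies. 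Since the block only moves entries in rows $\geq a$, rows $<a$ retain the values $n{+}1{-}k$, hence the row index of the next application of $\stair$ is $\geq a$; combined with the first coordinates being constant inside a block, this gives the weakly increasing condition on first coordinates. Each link is a cover (so $\ell$ strictly increases) and, when $w\neq w_0$, $K\neq\emptyset$ (e.g.\ $w^{-1}(w(a){+}1)\in K$) so $\stair(w)\neq\emptyset$; thus $\CS(w)$ terminates at $w_0$.

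For heaviness, Theorem~\ref{prop:minnummarks} gives $\#M(\CS(w))\geq\absoverrajcode{w}$, so I only need the reverse inequality, which I would prove by induction on $\ell(w_0)-\ell(w)$ (the case $w=w_0$ being trivial). Write the first block of $\CS(w)$ as $((a,b_1),\dots,(a,b_k))$ with $b_1>\cdots>b_k$ and set $w'=wt_{ab_1}\cdots t_{ab_k}$, so $\CS(w)$ is this block followed by $\CS(w')$. Inside the block only $(a,b_1)$ is a minimal marking of $\CS(w)$ (links $2,\dots,k$ have first coordinate $a$ and strictly decreasing second coordinate), and if $(a',c_1)$ is the first link of $\CS(w')$ then $c_1\neq b_k$ (links are distinct) while every link strictly after $(a',c_1)$ has the same minimal-marking status in $\CS(w)$ and $\CS(w')$; a short case check on $(a',c_1)$ then gives $\#M(\CS(w))=\#M(\CS(w'))+1-\epsilon$, where $\epsilon=1$ if $a'=a$ and $c_1<b_k$, and $\epsilon=0$ otherwise. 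Using $\#M(\CS(w'))=\absoverrajcode{w'}$ (induction), heaviness reduces to the identity $\absoverrajcode{w}-\absoverrajcode{w'}=1-\epsilon$.

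This identity is where the real work lies, and I expect it to be the main obstacle. By Lemma~\ref{lem:drecursionanychain}, each of the $k$ steps of the block (all at row $a$, sending its entry upward through $w(b_1)<\cdots<w(b_k)$) can only decrease $\orr_a$, can only increase entries at positions $>a$, and fixes those at positions $<a$, so $\absoverrajcode{w}-\absoverrajcode{w'}=\orr(w)_a-\orr(w')_a-\sum_{p>a}\bigl(\orr(w')_p-\orr(w)_p\bigr)$. The point of the peculiar ``staircase'' selection --- grouping $K$ into the decreasing runs $K_1,K_2,\dots$ of $\orr$-values --- is precisely that passing through the positions of each $K_p$ transfers that chunk of $\orr$-mass from row $a$ to later rows with no net loss, so that the only surviving discrepancy is a single unit at row $a$, destroyed exactly when $\epsilon=0$. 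I would prove this by a finer induction, peeling off one group $K_p$ at a time (each group behaving like an interior block, in analogy with the single-greedy-step computation of Lemma~\ref{lem:greedyrajchange}), carefully matching position by position the increase at each later entry against the decrease at row $a$. Granting the identity, the displayed marking count gives $\#M(\CS(w))=\absoverrajcode{w'}+1-\epsilon=\absoverrajcode{w}$, so $\CS(w)$ is heavy.
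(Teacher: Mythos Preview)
Your approach is essentially the paper's: induct on $\ell(w_0)-\ell(w)$, peel off one $\stair$ block to pass from $w$ to $w'$, and reduce heaviness to computing $\absoverrajcode{w}-\absoverrajcode{w'}$ by tracking how $\orr$ changes as the links of that block (organised into the groups $K_1,K_2,\ldots$) are applied one at a time. The paper carries out exactly this plan, asserting that the links of $K_1$ together drop $\absoverrajcode{\cdot}$ by one while each subsequent group $K_p$ contributes net zero, so that $\absoverrajcode{w}=\absoverrajcode{w'}+1$ unconditionally.

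One wrinkle worth flagging: your parameter $\epsilon$ is in fact always $0$. The paper never introduces it; it simply uses that the first link of $\CS(w')$ is a minimal marking in $\CS(w)$. Once you have the unconditional identity $\absoverrajcode{w}=\absoverrajcode{w'}+1$, this follows from Theorem~\ref{prop:minnummarks} combined with your own count $\#M(\CS(w))=\#M(\CS(w'))+1-\epsilon$. So your target identity $\absoverrajcode{w}-\absoverrajcode{w'}=1-\epsilon$ is correct only because the $\epsilon=1$ case is vacuous: if you actually run the group-by-group computation you sketch, you will obtain $1$ on the right-hand side regardless of anything about $\CS(w')$, and it is this that forces $\epsilon=0$, not the other way around. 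This is not a genuine gap, just a small misdirection in how you have set up the bookkeeping.
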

	\begin{proof}
		The verification that $\CS(w)$ is a climbing chain of $w$ is straightforward. 
		Recall that $\CS(w)$ is the concatenation of $\stair(w)$ and another staircase chain $\CS(w')$. 
		Observe that the sequence $\stair(w)$ will contribute only its first element to $M(\CS(w))$ by definition. If we can show that $\absoverrajcode{w}= \absoverrajcode{w'}+1$, then we are done by induction. 
		
		We show that $\absoverrajcode{w}= \absoverrajcode{w'}+1$ by tracking how $\absoverrajcode{\cdot}$ changes as the transpositions corresponding to the links in $\stair(w)$ are applied iteratively to $w$. Recall the sets $K_1,\ldots,K_q$ used to define $\stair(w)$. 
		
		To see the following claims, refer to the visual aid in Figure \ref{fig:staircase}. Each link from $K_1$ other than the last such link does not change the previous value $\absoverrajcode{\cdot}$. The last link from $K_1$ decreases the previous value $\absoverrajcode{\cdot}$ by 1. For links from each subsequent set $K_p$, 
		\begin{itemize}
			\item the first such link increases $\absoverrajcode{\cdot}$ by 1;
			\item intermediate such links do not change $\absoverrajcode{\cdot}$;
			\item the final such link decreases $\absoverrajcode{\cdot}$ by 1.
		\end{itemize}
	Any singleton $K_p$ acts as both the first and final link, leaving $\absoverrajcode{\cdot}$ unchanged. Thus, \[\absoverrajcode{w}= \absoverrajcode{w'}+1.\qedhere\]
	\end{proof}	

	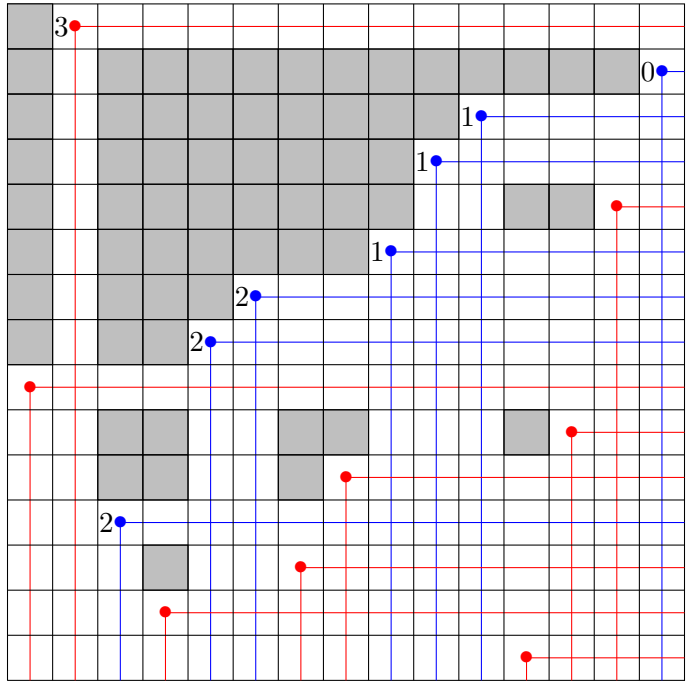
\begin{figure}[ht]
		\begin{center}
			\begin{tikzpicture}[scale=.6]
				\draw (0,0)--(15,0)--(15,15)--(0,15)--(0,0);
				
				\draw[draw=red] (15,14.5) -- (1.5,14.5) node[red] {$\bullet$} -- (1.5,0); 
				\draw[draw=blue] (15,13.5) -- (14.5,13.5) node[blue] {$\bullet$} -- (14.5,0); 
				\draw[draw=blue] (15,12.5) -- (10.5,12.5) node[blue] {$\bullet$} -- (10.5,0); 
				\draw[draw=blue] (15,11.5) -- (9.5,11.5) node[blue] {$\bullet$} -- (9.5,0); 
				\draw[draw=red] (15,10.5) -- (13.5,10.5) node[red] {$\bullet$} -- (13.5,0); 
				\draw[draw=blue] (15,9.5) -- (8.5,9.5) node[blue] {$\bullet$} -- (8.5,0); 
				\draw[draw=blue] (15,8.5) -- (5.5,8.5) node[blue] {$\bullet$} -- (5.5,0); 
				\draw[draw=blue] (15,7.5) -- (4.5,7.5) node[blue] {$\bullet$} -- (4.5,0); 
				\draw[draw=red] (15,6.5) -- (0.5,6.5) node[red] {$\bullet$} -- (0.5,0); 
				\draw[draw=red] (15,5.5) -- (12.5,5.5) node[red] {$\bullet$} -- (12.5,0); 
				\draw[draw=red] (15,4.5) -- (7.5,4.5) node[red] {$\bullet$} -- (7.5,0); 
				\draw[draw=blue] (15,3.5) -- (2.5,3.5) node[blue] {$\bullet$} -- (2.5,0); 
				\draw[draw=red] (15,2.5) -- (6.5,2.5) node[red] {$\bullet$} -- (6.5,0); 
				\draw[draw=red] (15,1.5) -- (3.5,1.5) node[red] {$\bullet$} -- (3.5,0); 
				\draw[draw=red] (15,0.5) -- (11.5,0.5) node[red] {$\bullet$} -- (11.5,0); 
				
				\filldraw[draw=black,fill=lightgray] (0,14)--(1,14)--(1,15)--(0,15)--(0,14); 
				\filldraw[draw=black,fill=lightgray] (0,13)--(1,13)--(1,14)--(0,14)--(0,13); 
				\filldraw[draw=black,fill=lightgray] (2,13)--(3,13)--(3,14)--(2,14)--(2,13); 
				\filldraw[draw=black,fill=lightgray] (3,13)--(4,13)--(4,14)--(3,14)--(3,13); 
				\filldraw[draw=black,fill=lightgray] (4,13)--(5,13)--(5,14)--(4,14)--(4,13); 
				\filldraw[draw=black,fill=lightgray] (5,13)--(6,13)--(6,14)--(5,14)--(5,13); 
				\filldraw[draw=black,fill=lightgray] (6,13)--(7,13)--(7,14)--(6,14)--(6,13); 
				\filldraw[draw=black,fill=lightgray] (7,13)--(8,13)--(8,14)--(7,14)--(7,13); 
				\filldraw[draw=black,fill=lightgray] (8,13)--(9,13)--(9,14)--(8,14)--(8,13); 
				\filldraw[draw=black,fill=lightgray] (9,13)--(10,13)--(10,14)--(9,14)--(9,13); 
				\filldraw[draw=black,fill=lightgray] (10,13)--(11,13)--(11,14)--(10,14)--(10,13); 
				\filldraw[draw=black,fill=lightgray] (11,13)--(12,13)--(12,14)--(11,14)--(11,13); 
				\filldraw[draw=black,fill=lightgray] (12,13)--(13,13)--(13,14)--(12,14)--(12,13); 
				\filldraw[draw=black,fill=lightgray] (13,13)--(14,13)--(14,14)--(13,14)--(13,13); 
				\filldraw[draw=black,fill=lightgray] (0,12)--(1,12)--(1,13)--(0,13)--(0,12); 
				\filldraw[draw=black,fill=lightgray] (2,12)--(3,12)--(3,13)--(2,13)--(2,12); 
				\filldraw[draw=black,fill=lightgray] (3,12)--(4,12)--(4,13)--(3,13)--(3,12); 
				\filldraw[draw=black,fill=lightgray] (4,12)--(5,12)--(5,13)--(4,13)--(4,12); 
				\filldraw[draw=black,fill=lightgray] (5,12)--(6,12)--(6,13)--(5,13)--(5,12); 
				\filldraw[draw=black,fill=lightgray] (6,12)--(7,12)--(7,13)--(6,13)--(6,12); 
				\filldraw[draw=black,fill=lightgray] (7,12)--(8,12)--(8,13)--(7,13)--(7,12); 
				\filldraw[draw=black,fill=lightgray] (8,12)--(9,12)--(9,13)--(8,13)--(8,12); 
				\filldraw[draw=black,fill=lightgray] (9,12)--(10,12)--(10,13)--(9,13)--(9,12); 
				\filldraw[draw=black,fill=lightgray] (0,11)--(1,11)--(1,12)--(0,12)--(0,11); 
				\filldraw[draw=black,fill=lightgray] (2,11)--(3,11)--(3,12)--(2,12)--(2,11); 
				\filldraw[draw=black,fill=lightgray] (3,11)--(4,11)--(4,12)--(3,12)--(3,11); 
				\filldraw[draw=black,fill=lightgray] (4,11)--(5,11)--(5,12)--(4,12)--(4,11); 
				\filldraw[draw=black,fill=lightgray] (5,11)--(6,11)--(6,12)--(5,12)--(5,11); 
				\filldraw[draw=black,fill=lightgray] (6,11)--(7,11)--(7,12)--(6,12)--(6,11); 
				\filldraw[draw=black,fill=lightgray] (7,11)--(8,11)--(8,12)--(7,12)--(7,11); 
				\filldraw[draw=black,fill=lightgray] (8,11)--(9,11)--(9,12)--(8,12)--(8,11); 
				\filldraw[draw=black,fill=lightgray] (0,10)--(1,10)--(1,11)--(0,11)--(0,10); 
				\filldraw[draw=black,fill=lightgray] (2,10)--(3,10)--(3,11)--(2,11)--(2,10); 
				\filldraw[draw=black,fill=lightgray] (3,10)--(4,10)--(4,11)--(3,11)--(3,10); 
				\filldraw[draw=black,fill=lightgray] (4,10)--(5,10)--(5,11)--(4,11)--(4,10); 
				\filldraw[draw=black,fill=lightgray] (5,10)--(6,10)--(6,11)--(5,11)--(5,10); 
				\filldraw[draw=black,fill=lightgray] (6,10)--(7,10)--(7,11)--(6,11)--(6,10); 
				\filldraw[draw=black,fill=lightgray] (7,10)--(8,10)--(8,11)--(7,11)--(7,10); 
				\filldraw[draw=black,fill=lightgray] (8,10)--(9,10)--(9,11)--(8,11)--(8,10); 
				\filldraw[draw=black,fill=lightgray] (11,10)--(12,10)--(12,11)--(11,11)--(11,10); 
				\filldraw[draw=black,fill=lightgray] (12,10)--(13,10)--(13,11)--(12,11)--(12,10); 
				\filldraw[draw=black,fill=lightgray] (0,9)--(1,9)--(1,10)--(0,10)--(0,9); 
				\filldraw[draw=black,fill=lightgray] (2,9)--(3,9)--(3,10)--(2,10)--(2,9); 
				\filldraw[draw=black,fill=lightgray] (3,9)--(4,9)--(4,10)--(3,10)--(3,9); 
				\filldraw[draw=black,fill=lightgray] (4,9)--(5,9)--(5,10)--(4,10)--(4,9); 
				\filldraw[draw=black,fill=lightgray] (5,9)--(6,9)--(6,10)--(5,10)--(5,9); 
				\filldraw[draw=black,fill=lightgray] (6,9)--(7,9)--(7,10)--(6,10)--(6,9); 
				\filldraw[draw=black,fill=lightgray] (7,9)--(8,9)--(8,10)--(7,10)--(7,9); 
				\filldraw[draw=black,fill=lightgray] (0,8)--(1,8)--(1,9)--(0,9)--(0,8); 
				\filldraw[draw=black,fill=lightgray] (2,8)--(3,8)--(3,9)--(2,9)--(2,8); 
				\filldraw[draw=black,fill=lightgray] (3,8)--(4,8)--(4,9)--(3,9)--(3,8); 
				\filldraw[draw=black,fill=lightgray] (4,8)--(5,8)--(5,9)--(4,9)--(4,8); 
				\filldraw[draw=black,fill=lightgray] (0,7)--(1,7)--(1,8)--(0,8)--(0,7); 
				\filldraw[draw=black,fill=lightgray] (2,7)--(3,7)--(3,8)--(2,8)--(2,7); 
				\filldraw[draw=black,fill=lightgray] (3,7)--(4,7)--(4,8)--(3,8)--(3,7); 
				\filldraw[draw=black,fill=lightgray] (2,5)--(3,5)--(3,6)--(2,6)--(2,5); 
				\filldraw[draw=black,fill=lightgray] (3,5)--(4,5)--(4,6)--(3,6)--(3,5); 
				\filldraw[draw=black,fill=lightgray] (6,5)--(7,5)--(7,6)--(6,6)--(6,5); 
				\filldraw[draw=black,fill=lightgray] (7,5)--(8,5)--(8,6)--(7,6)--(7,5); 
				\filldraw[draw=black,fill=lightgray] (11,5)--(12,5)--(12,6)--(11,6)--(11,5); 
				\filldraw[draw=black,fill=lightgray] (2,4)--(3,4)--(3,5)--(2,5)--(2,4); 
				\filldraw[draw=black,fill=lightgray] (3,4)--(4,4)--(4,5)--(3,5)--(3,4); 
				\filldraw[draw=black,fill=lightgray] (6,4)--(7,4)--(7,5)--(6,5)--(6,4); 
				\filldraw[draw=black,fill=lightgray] (3,2)--(4,2)--(4,3)--(3,3)--(3,2); 
				
				\draw (0,1)--(15,1);
				\draw (0,2)--(15,2);
				\draw (0,3)--(15,3);
				\draw (0,4)--(15,4);
				\draw (0,5)--(15,5);
				\draw (0,6)--(15,6);
				\draw (0,7)--(15,7);
				\draw (0,8)--(15,8);
				\draw (0,9)--(15,9);
				\draw (0,10)--(15,10);
				\draw (0,11)--(15,11);
				\draw (0,12)--(15,12);
				\draw (0,13)--(15,13);
				\draw (0,14)--(15,14);
				
				\draw (1,0)--(1,15);
				\draw (2,0)--(2,15);
				\draw (3,0)--(3,15);
				\draw (4,0)--(4,15);
				\draw (5,0)--(5,15);
				\draw (6,0)--(6,15);
				\draw (7,0)--(7,15);
				\draw (8,0)--(8,15);
				\draw (9,0)--(9,15);
				\draw (10,0)--(10,15);
				\draw (11,0)--(11,15);
				\draw (12,0)--(12,15);
				\draw (13,0)--(13,15);
				\draw (14,0)--(14,15);
				
				\node[left] at (1.58, 14.5) {3};
				\node[left] at (2.58, 3.5) {2};
				\node[left] at (4.58, 7.5) {2};
				\node[left] at (5.58, 8.5) {2};
				\node[left] at (8.58, 9.5) {1};
				\node[left] at (9.58, 11.5) {1};
				\node[left] at (10.58, 12.5) {1};
				\node[left] at (14.58, 13.5) {0};
				
			\end{tikzpicture}
			\caption{A visual aid for the proof of Theorem \ref{thm:staircaseheavy}.}
			\label{fig:staircase}
		\end{center}
	\end{figure}
	
	\begin{lemma}
		\label{lem:heavyminfirstmarks}
		Let $C$ be a heavy climbing chain of $w\in S_n$. Then
		\[\dwt(C,M(C))_{i_1}\geq \dwt(\CS(w),M(\CS(w)))_{i_1}. \]
	\end{lemma}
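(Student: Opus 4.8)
The plan is to reduce the statement to a single comparison of Rajchgot indices and then establish that comparison. First I would record an exact formula for $\dwt(C,M(C))_{i_1}$ when $C$ is heavy. Since the row indices of a climbing chain are weakly increasing, the links of $C$ of the form $(i_1,\star)$ are exactly an initial segment $C_1,\ldots,C_k$; let $w^{(k)}$ be the permutation reached after them. By Lemma~\ref{lem:heavytruncation}, $C'=(C_{k+1},\ldots,C_m)$ is a heavy climbing chain of $w^{(k)}$, so $\#M(C')=\absoverrajcode{w^{(k)}}$. On the other hand $M(C)$ is the disjoint union of $M(C')$ with the marked links among $C_1,\ldots,C_k$, and the number of the latter is precisely $\dwt(C,M(C))_{i_1}$ (these being all the links $(i_1,\star)$ of $C$). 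As $\#M(C)=\absoverrajcode{w}$ because $C$ is heavy, this gives
\[\dwt(C,M(C))_{i_1}=\absoverrajcode{w}-\absoverrajcode{w^{(k)}}.\]
By Theorem~\ref{thm:staircaseheavy}, $\CS(w)$ is heavy, so the same identity applies to it; writing $u$ for the permutation reached after the initial $(i_1,\star)$ links of $\CS(w)$, we get $\dwt(\CS(w),M(\CS(w)))_{i_1}=\absoverrajcode{w}-\absoverrajcode{u}$. Hence the lemma is equivalent to the single inequality $\absoverrajcode{w^{(k)}}\le\absoverrajcode{u}$, i.e.\ to the claim that among all heavy climbing chains of $w$, the staircase chain reaches the $\absoverrajcode{}$-largest permutation after clearing row $i_1$.

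Second I would prove this optimality. Write $a=i_1$. Since positions $1,\ldots,a-1$ are fixed in every permutation that occurs, Lemma~\ref{lem:drecursionanychain} governs how $\orr(\cdot)$ changes along a single row-$a$ cover $v\lessdot vt_{aj}$: the $a$-th entry weakly decreases, the later entries weakly increase, and the earlier entries are untouched. By the proof of Theorem~\ref{thm:staircaseheavy}, one staircase block at row $a$ changes $\absoverrajcode{\cdot}$ by exactly $-1$, so $\absoverrajcode{u}=\absoverrajcode{w}-\dwt(\CS(w),M(\CS(w)))_{i_1}$; it remains to show that no competing sequence of row-$a$ covers ending with $a$ saturated reaches an $\absoverrajcode{}$-larger permutation. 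Equivalently, one must show that the leap targets prescribed by $\stair$ --- those in $\bigcup_p K_p$, visited in decreasing column order --- are precisely the leaps that leave $\orr(\cdot)_p$ unchanged for every later row $p$ and for every column still to be used as a leap target, so that $\absoverrajcode{\cdot}$ is never forfeited except when the value that must be placed at position $a$ forces it, whereas any deviation transfers the loss of a unit onto a later row, where it can only be compounded. I expect to carry this out by induction on $\ell(w_0)-\ell(w)$, peeling off one staircase block at a time (mirroring the recursion in the proof of Theorem~\ref{thm:staircaseheavy}) and comparing it against the first maximal run of row-$a$ links of an arbitrary heavy chain, with a case analysis on how an alternative choice of leap targets intersects the sets $K_1,\ldots,K_q$.

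The main obstacle is exactly this last optimality step: one must control the effect of an arbitrary row-$a$ leap on the whole vector $\orr(\cdot)$, not merely its $a$-th coordinate, and then convert the per-cover bookkeeping into a global statement --- in particular ruling out that a non-staircase chain could deviate from $\stair(w)$ early in its row-$a$ run and recover the lost units of $\absoverrajcode{\cdot}$ further down the chain. Figure~\ref{fig:staircase} captures the intended accounting, but making it rigorous is where the real work lies; by contrast, Step 1 is routine bookkeeping with Lemma~\ref{lem:heavytruncation} and the definition of the minimal markings.
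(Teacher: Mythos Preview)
Your Step~1 reduction is correct: for a heavy chain $C$ with last row-$i_1$ link $C_k$, the identity $\dwt(C,M(C))_{i_1}=\absoverrajcode{w}-\absoverrajcode{w^{(k)}}$ holds (the bookkeeping with Lemma~\ref{lem:heavytruncation} is right, including the check that $C_{k+1}\in M(C)$ since $i_{k+1}>i_1$), and the same formula applies to $\CS(w)$. So the lemma is indeed equivalent to $\absoverrajcode{w^{(k)}}\le\absoverrajcode{u}$.

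The gap is Step~2, which you yourself flag as unproven. More to the point, you have overlooked the shortcut that makes the optimality argument unnecessary. Lemma~\ref{lem:hwminimal} already provides the universal lower bound $\dwt(C,U)_{i_1}\ge h(w)$ for \emph{every} marked climbing chain of $w$, heavy or not, with any marking set $U\supseteq M(C)$. Given that, the lemma reduces to the single equality $\dwt(\CS(w),M(\CS(w)))_{i_1}=h(w)$, a purely structural statement about the staircase chain: the minimal markings of $\CS(w)$ are exactly the first links of the successive blocks $\stair(v)$, and the number of such blocks at row $i_1$ equals $h(w)$ because each block ends at the highest hook nested under the current row-$i_1$ hook, mirroring the recursion that defines $h(w)$. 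This is a short direct verification; no comparison against an arbitrary heavy chain is needed.

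By contrast, your Step~2 asks for a global extremality statement---that among all permutations reachable by row-$i_1$ cover sequences terminating with position $i_1$ saturated, the staircase target maximizes $\absoverrajcode{\cdot}$---which is logically equivalent to the lemma you are trying to prove (via your own Step~1), so no genuine reduction has occurred. The case analysis you sketch against the sets $K_1,\ldots,K_q$ may well be workable, but it is the hard road; the role of $h(w)$ is precisely to decouple the lower bound (proved once, for all chains, by the easy induction in Lemma~\ref{lem:hwminimal}) from the attainment (a computation specific to $\CS(w)$).
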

	\begin{proof}
		By Lemma \ref{lem:hwminimal}, it is enough to show that 
		\[\dwt(\CS(w),\CS(w))_{i_1} = h(w). \]
		The chain $\CS(w)$ is constructed iteratively by adding chunks of the form $\stair(v)$ for $v\in S_n$. Within a chunk $\stair(v)$, the first components are all the same and the second components are decreasing. The last element $(a,b)$ in $\stair(v)$ will exactly be the highest hook nested under the hook in row $a$ of $D(v)$. The first link of the next chunk will either have first component $a'>a$, or first component $a$ and second component $b'>b$. Thus, the links in $M(\CS(w))$ are exactly the first elements in each chunk $\stair(v)$ used in constructing $\CS(w)$. The last links of the chunks are exactly the row indices of the hooks in $D(w)$ counted by $h(w)$.
	\end{proof}

	\begin{conjecture}
		\label{conj:staircaseleading}
		Fix $w\in S_n$ and pick any term order satisfying $x_1>x_2>\cdots>x_n$. Let $\xi=(\CS(w),M(\CS(w)))$. Then
		\[\bm{x}^{\dwt(\xi)} = \min \left\{\bm{x}^{\dwt(C,M(C))} \mid C \mbox{ is a heavy climbing chain of } w \right\}. \] 
		Equivalently,
		\[\bm{x}^{\wt(\xi)} = \max \left\{\bm{x}^{\wt(C,M(C))} \mid C \mbox{ is a heavy climbing chain of } w \right\}. \] 
	\end{conjecture}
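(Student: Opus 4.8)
The plan is to run the argument of Sections~\ref{sec:rajcode} and~\ref{sec:lead} ``in reverse'': where the nested chain was shown to be the heavy chain of \emph{maximal} prefix-weight (equivalently, $\orr(w)$ is the suffix-minimal heavy dual weight), the staircase chain should be the heavy chain of \emph{minimal} prefix-weight. First I would record the order-theoretic fact that makes this a statement about prefix sums: since $\CS(w)$ is heavy, all the monomials $\bm{x}^{\dwt(C,M(C))}$ with $C$ heavy share the degree $\absoverrajcode{w}$, and for two monomials $\bm{x}^{a},\bm{x}^{b}$ of equal degree one has $\bm{x}^{a}\le\bm{x}^{b}$ in \emph{every} term order with $x_1>x_2>\cdots>x_n$ if and only if $\sum_{p\le k}a_p\le\sum_{p\le k}b_p$ for all $k$ (write $\bm{x}^{b}=\bm{x}^{a}\prod_k(x_k/x_{k+1})^{Q_k}$ with $Q_k=\sum_{p\le k}(b_p-a_p)\ge 0$, and use $x_k\ge x_{k+1}$). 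Hence Conjecture~\ref{conj:staircaseleading} is equivalent to the single inequality
\[\sum_{p=1}^{k}\dwt(\CS(w),M(\CS(w)))_p\ \le\ \sum_{p=1}^{k}\dwt(C,M(C))_p\qquad\text{for every heavy }C\text{ and every }k,\]
and the leading-monomial statement for $\mathfrak{G}_w^{\mathrm{top}}$ then follows from Theorem~\ref{thm:grothformula}, since all surviving terms of the top component carry the common sign $(-1)^{\ell(C)-\absoverrajcode{w}}$ and so cannot cancel.

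I would prove this prefix inequality by induction on $\ell(w_0)-\ell(w)$, peeling off the first nonfixed position $i_1=\min\{p\mid w(p)\neq n+1-p\}$ exactly as in the proof of Theorem~\ref{thm:leadingchainweight}. Every heavy chain $C$, and the chain $\CS(w)$, begins with a block of links of the form $(i_1,\star)$ and leaves position $i_1$ forever afterward. For $k\le i_1$ the inequality is Lemmas~\ref{lem:hwminimal} and~\ref{lem:heavyminfirstmarks}: both sides vanish for $p<i_1$, and $\dwt(\CS(w),M(\CS(w)))_{i_1}=h(w)\le\dwt(C,M(C))_{i_1}$. For $k>i_1$, let $C'$ be $C$ with its initial $(i_1,\star)$-block deleted; by Lemma~\ref{lem:heavytruncation}, $C'$ is a heavy chain of the permutation $w^{(k)}$ reached after that block. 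Let $w^{*}$ be the permutation reached after the initial $(i_1,\star)$-block of $\CS(w)$; by construction the tail of $\CS(w)$ equals $\CS(w^{*})$, and $M(\CS(w))$ agrees with $M(\CS(w^{*}))$ at all positions $>i_1$. Using $\dwt(C,M(C))_{i_1}\ge h(w)=\dwt(\CS(w),M(\CS(w)))_{i_1}$, the case $k>i_1$ reduces to
\[\sum_{i_1<p\le k}\dwt(\CS(w^{*}),M(\CS(w^{*})))_p\ \le\ \sum_{i_1<p\le k}\dwt(C',M(C'))_p.\]
When $w^{*}=w^{(k)}$ this is exactly the inductive hypothesis for $w^{*}$ (both $\dwt$'s vanish at positions $\le i_1$), so that case is finished.

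The main obstacle — and, I believe, the reason the paper leaves this open — is that in general $w^{*}\neq w^{(k)}$: distinct heavy chains drag the hook of row $i_1$ to column $n+1-i_1$ along different Bruhat-increasing paths, landing on distinct permutations with distinct vectors $\orr(\cdot)$, and heaviness only forces $\absoverrajcode{w^{(k)}}=\absoverrajcode{w}-\dwt(C,M(C))_{i_1}$. Contrast the proof of Theorem~\ref{thm:leadingchainweight}, where Lemmas~\ref{lem:heavytruncation} and~\ref{lem:heavyequalitycase} keep the tail completely under control because $\orr(\cdot)$ is itself the governing invariant; here the governing quantity $\dwt(\CS(\cdot),M(\CS(\cdot)))$ exceeds $\orr(\cdot)$ at the leading coordinate whenever $h(w)<\orr(w)_{i_1}$, so it is not preserved by the recursion. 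To close the gap one needs a \emph{routing-optimality lemma}: among all ways of moving row $i_1$ to column $n+1-i_1$ that are compatible with a heavy chain, the staircase priority — pass the hook of row $i_1$ over the available hooks of largest $\orr$ first, and within a block of equal $\orr$ over the lowest hooks first — produces a landing permutation $w^{*}$ for which $\dwt(\CS(w^{*}),M(\CS(w^{*})))$ prefix-minorizes $\dwt(\CS(v),M(\CS(v)))$ for every competitor $v$. Combined with the inductive hypothesis (for $v$, applied to any heavy chain $C'$ of $v$), this lemma supplies the displayed inequality and hence the theorem.

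I would try to establish the routing-optimality lemma by an exchange argument: any heavy-compatible routing of row $i_1$ can be transformed into the staircase routing by a sequence of elementary moves — swapping two consecutive $(i_1,\star)$ links, or inserting/deleting an intermediate $(i_1,\star)$ link that the heaviness constraint permits — and one shows each such move weakly decreases the prefix sums of the eventual dual weight. The heart of this is the purely local computation of how the entries $\orr(v)_p$ of the moving permutation change as the hook of row $i_1$ is dragged past a single hook: $\orr(v)_p$ goes up by one precisely at rows $p$ that begin a longest increasing subsequence forced through row $i_1$, and is otherwise unchanged. Proving that processing higher-$\orr$ hooks first, and lower hooks first within a tie, pushes these unit increments as far to the right as possible in the eventual $\dwt$-vector is the step I expect to be genuinely delicate; it would almost certainly need a bar-graph picture in the spirit of Figures~\ref{fig:bargraphs} and~\ref{fig:staircase}, together with careful bookkeeping of which increments are absorbed into later $(i_1,\star)$ links versus which survive as marks in higher rows.
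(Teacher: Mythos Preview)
The paper does not prove this statement: it is explicitly left as Conjecture~\ref{conj:staircaseleading}, supported only by Theorem~\ref{thm:staircaseheavy} (heaviness of $\CS(w)$) and Lemma~\ref{lem:heavyminfirstmarks} (minimality at coordinate $i_1$). There is therefore no proof in the paper to compare your proposal against; you have correctly recognised this and are attempting to go beyond what the authors established.

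Your reduction to prefix-sum (dominance) inequalities is sound, and you have put your finger on exactly the obstruction the authors presumably encountered: after peeling off the $(i_1,\star)$-block, the staircase chain and an arbitrary heavy chain land on \emph{different} permutations $w^{*}$ and $w^{(k)}$, so the induction does not close. However, your proposed routing-optimality lemma, as stated, is false. If a competing heavy chain $C$ has $\dwt(C,M(C))_{i_1}=h(w)+\delta$ with $\delta>0$, then its landing permutation $v=w^{(k)}$ satisfies $\absoverrajcode{v}=\absoverrajcode{w^{*}}-\delta$, so $|\dwt(\CS(w^{*}),M(\CS(w^{*})))|>|\dwt(\CS(v),M(\CS(v)))|$ and prefix-minorization is impossible already at $k=n$. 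Equivalently, your ``reduction'' to the bare tail inequality $\sum_{i_1<p\le k}\dwt(\CS(w^{*}))_p\le\sum_{i_1<p\le k}\dwt(C')_p$ throws away the slack $\delta$ earned at coordinate $i_1$; that slack is essential and must be carried forward. The statement you actually need is
\[
\sum_{i_1<p\le k}\dwt\bigl(\CS(w^{*}),M(\CS(w^{*}))\bigr)_p\ \le\ \delta+\sum_{i_1<p\le k}\dwt\bigl(\CS(v),M(\CS(v))\bigr)_p\quad\text{for all }k,
\]
which, combined with the inductive hypothesis for $v$, would yield the desired prefix inequality. This $\delta$-shifted version is at least consistent with the degree count, but it is genuinely a statement relating the staircase dual weights of two \emph{different} permutations and is not obviously amenable to your exchange argument on routings of row $i_1$ alone: the extra $\delta$ marks that $C$ spends at $i_1$ are precisely compensated by $\CS(v)$ spending $\delta$ fewer marks somewhere in rows $>i_1$, and you would need to control \emph{where} those savings occur. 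Your local computation of how $\orr(v)_p$ changes as the hook passes a single row is the right ingredient, but it feeds into $h(\cdot)$ and the subsequent $K_1,K_2,\ldots$ decomposition of $\stair(\cdot)$ in a way that is not obviously monotone under your elementary moves. In short: the overall architecture is reasonable, but the key lemma needs to be reformulated to track the slack $\delta$, and even then the exchange argument remains the hard, unproven core --- which is consistent with the authors leaving the statement as a conjecture.
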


	 By Theorem \ref{thm:leapingleading}, the leading monomial of $\mathfrak{S}_w$ in any term order with $x_1>x_2>\cdots>x_n$ is witnessed by $\CL(w)$. The following conjecture is an analogue of Conjecture \ref{conj:elena}/Theorem \ref{thm:leadingchainweight}. We have tested it for $n\leq 8$.
	\begin{conjecture}
		\label{conj:reverseinterpolating}
		Fix $w\in S_n$ and any term order with $x_1>x_2>\cdots>x_n$. For $\ell(w)< k\leq \absrajcode{w}$, let $m_k(\bm{x})$ be the leading monomial of the degree $k$ homogeneous component of $\mathfrak{G}_w$. Then
		\[m_k(\bm{x}) = x_pm_{k-1}(\bm{x}) \]
		where $p$ is the smallest index such that $x_p m_{k-1}(\bm{x})$ divides $\bm{x}^{\wt(\CS(w),M(\CS(w)))}$.
	\end{conjecture}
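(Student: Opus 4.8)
The plan is to transpose, to the reverse term order, the two-step strategy of Sections~\ref{sec:inter}--\ref{sec:lead}. By Theorem~\ref{thm:grothformula}, if $(C,U)$ is a marked climbing chain of $w$ with $\#U=\ell(C)-d$ then $|\wt(C,U)|=\ell(w)+d$ and $\mathrm{sign}(C,U)=(-1)^{d}$, so the degree-$k$ homogeneous component of $\mathfrak{G}_w$ (with $k=\ell(w)+d$) equals $(-1)^{d}\sum_{(C,U):\,\#U=\ell(C)-d}\bm{x}^{\wt(C,U)}$, a sum of monomials all carrying the same sign; hence no cancellation occurs, and the leading monomial of that component in any term order $<$ is exactly $\max_{<}\{\bm{x}^{\wt(C,U)}\mid \#U=\ell(C)-d\}$ (its coefficient being $\pm$ the number of chains realizing it, hence nonzero). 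So it suffices to show that for $x_1>x_2>\cdots>x_n$ this maximum equals $x_p\,m_{k-1}$ with $p$ minimal subject to $x_p m_{k-1}\mid\bm{x}^{\wt(\CS(w),M(\CS(w)))}$.

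The first and hardest step is Conjecture~\ref{conj:staircaseleading}, the $d=\absrajcode{w}-\absinvcode{w}$ case, i.e.\ that $\bm{x}^{\wt(\CS(w),M(\CS(w)))}$ is $<$-maximal among the $\bm{x}^{\wt(C,M(C))}$ for heavy $C$; this is the reverse analogue of Theorem~\ref{thm:leadingchainweight}, and I would prove it by the same scheme. Lemma~\ref{lem:heavyminfirstmarks} already supplies the base inequality $\dwt(C,M(C))_{i_1}\ge\dwt(\CS(w),M(\CS(w)))_{i_1}=h(w)$ for every heavy chain. What is missing is the equality-case analysis dual to Lemma~\ref{lem:heavyequalitycase}: if a heavy chain $C$ attains $\dwt(C,M(C))_{i_1}=h(w)$, then the truncation $C'$ of $C$ past its last $(i_1,\star)$ link is again heavy and satisfies $\orr(w^{(k)})_p=\orr(w)_p$ for $p>i_1$, forcing $\dwt$ on later coordinates to be dominated by that of $\CS$. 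The ingredient this requires is a recursion for $\orr$ along the staircase chain parallel to Theorem~\ref{thm:drecursion}; the genuine difficulty — and the reason the statement is still a conjecture — is that $\CS(w)$ advances in whole \emph{chunks} $\stair(v)$ rather than one link at a time, so one must track how each $\orr(\cdot)_j$ evolves across an entire chunk and verify that the particular grouping of jump targets by $\orr$-value prescribed by $\stair$ is precisely the one realizing the $<$-minimal dual weight $\bm{x}^{\dwt}$ among heavy chains. The bookkeeping begun in the proof of Theorem~\ref{thm:staircaseheavy} (and Figure~\ref{fig:staircase}) is the right starting point. Granting this, iterating the equality case over $w^{(k_1)},w^{(k_2)},\ldots$ — where $k_p$ is the last $(p,\star)$ link of $C$ — as in Theorem~\ref{thm:leadingchainweight} finishes Conjecture~\ref{conj:staircaseleading}.

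With the top degree in hand, I would set up the interpolation. Define a \emph{reverse leads} sequence $\overline{\leads}(w)=(\overline{L}_w(0),\ldots,\overline{L}_w(d))$ with $\overline{L}_w(0)=\wt(\CL(w),\CL(w))$ — the leading exponent of $\mathfrak{S}_w$ in this order, by Theorem~\ref{thm:leapingleading} — and $\overline{L}_w(i)=\overline{L}_w(i-1)+e_p$ for $p$ the smallest index keeping the vector $\le\wt(\CS(w),M(\CS(w)))$ coordinatewise; here one first checks $\wt(\CL(w),\CL(w))\le\wt(\CS(w),M(\CS(w)))$ entrywise, which should follow by induction from $\dwt(\CS,M(\CS))_{i_1}\le h(w)=\dwt(\CL,\CL)_{i_1}$ together with Lemma~\ref{lem:hwminimal}. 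Then introduce \emph{reverse interpolating chains} $J^0(w)=\CL(w),\ldots,J^m(w)=\CS(w)$, where $J^k(w)$ follows the leaping rule for its first $m-k$ links (rounded to a chunk boundary) and the staircase rule thereafter, marked so as to completely mark the leaping prefix together with the head of the first staircase chunk and to minimally mark the remainder — the reverse counterpart of Definition~\ref{def:interpolatingchains}, Theorem~\ref{thm:interpolatingchainsexponents}, and Example~\ref{exp:5721463intchainsandleads}. An induction on $\ell(w)$ using change-of-$\rr$ formulas for the leap pair in place of Lemmas~\ref{lem:greedyrajchange}--\ref{lem:greedyleadsjchange} should identify $\{\wt(J^p(w),\overline{U}^p(w))\}_{p}$ with $\{\overline{L}_w(i)\}_{i}$.

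Finally, the leading-monomial statement would follow the template of Theorem~\ref{thm:interpolatingleadingterms}: prove reverse versions of Lemmas~\ref{lem:maxmarkedones} and~\ref{lem:onemarkbasecase} bounding $\dwt(C,U)_{i_1}$ \emph{below} by the $i_1$-entry of the interpolated vector $\overline{L}_w(m-\#U)$ (with $h(w)$ as base case and Conjecture~\ref{conj:staircaseleading} invoked in the equality branch), then pin down $\max_{<}\bm{x}^{\wt(C,U)}$ over $\#U=\ell(C)-d$ as $\bm{x}^{\overline{L}_w(d)}$ exactly as there, and unwind the definition of $\overline{\leads}$ to recover $m_k=x_p m_{k-1}$. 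I expect the entire argument to hinge on the staircase recursion of the second paragraph: everything else is a faithful mirror of Sections~\ref{sec:inter}--\ref{sec:lead}, but the chunked structure of $\CS(w)$ makes the analogue of Theorem~\ref{thm:drecursion} substantially more delicate than in the nested case, and establishing it is both the crux of Conjecture~\ref{conj:staircaseleading} and, through it, of Conjecture~\ref{conj:reverseinterpolating}.
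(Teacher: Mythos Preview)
The paper does not prove this statement: it is stated as Conjecture~\ref{conj:reverseinterpolating}, checked computationally for $n\le 8$, and left open. There is therefore no proof in the paper to compare against.

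Your proposal is not a proof either, and you are admirably explicit about this: you correctly identify Conjecture~\ref{conj:staircaseleading} as the crux and observe that an analogue of Theorem~\ref{thm:drecursion} for the staircase chain is the missing ingredient, made harder by the fact that $\CS(w)$ advances in chunks $\stair(v)$ rather than one link at a time. Your outline---mirror Sections~\ref{sec:inter}--\ref{sec:lead} with $\CL$ and $\CS$ in place of $\CG$ and $\CN$, build reverse interpolating chains and a reverse $\leads$ sequence, then run the argument of Theorem~\ref{thm:interpolatingleadingterms} with inequalities flipped---is exactly the program one would expect, and the paper's partial results (Theorem~\ref{thm:staircaseheavy}, Lemma~\ref{lem:heavyminfirstmarks}, Theorem~\ref{thm:leapingleading}) are precisely the pieces already in place for it. But as you yourself note, the staircase recursion you need in the second paragraph is not established anywhere, and without it neither Conjecture~\ref{conj:staircaseleading} nor Conjecture~\ref{conj:reverseinterpolating} follows. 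Your proposal should be read as a plausible roadmap, not a proof; the genuine gap you name is real and is why the paper leaves both statements conjectural.
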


	\begin{example}
		Mirroring Example \ref{exp:5721463leads}, let $w=5721463$. Then
		\begin{align*}
			\wt(\CL(w),\CL(w)) &= (5, 4, 2, 1, 0, 0, 0),\\
			\wt(\CS(w),M(\CS(w))) &= (5, 5, 2, 1, 1, 1, 0).
		\end{align*}
		Then the conjectured leading monomials of the homogeneous components of $\mathfrak{G}_w$ have exponents
		\begin{align*}
			&(5, 4, 2, 1, 0, 0, 0),\\
			&(5, \textbf{5}, 2, 1, 0, 0, 0),\\
			&(5, 5, 2, 1, \textbf{1}, 0, 0),\\
			&(5, 5, 2, 1, 1, \textbf{1}, 0).\qedhere
		\end{align*}
	\end{example}

	\bibliographystyle{plain}
	\bibliography{grothendieck-bibliography}
\end{document}